\newtheorem{remark}{\textit{Remark}}[section]
\newtheorem{problem}{Problem}[section]
\newtheorem{lemma}{Lemma}[section]
\newtheorem{proposition}{Proposition}[section]
\newtheorem{theorem}{Theorem}[section]
\newtheorem{corollary}{Corollary}[section]
\def\bA{\mathbf{A}}
\def\bv{\mathbf{v}}
\def\bx{\mathbf{x}}
\def\bp{\boldsymbol{p}}
\def\bthe{\boldsymbol{\theta}}
\def\beth{\boldsymbol{\eta}}
\def\bta{\boldsymbol{\tau}}
\def\bsi{\boldsymbol{\sigma}}
\def\bt{\boldsymbol{t}}
\def\bfX{\boldsymbol{X}}
\def\bfV{\mathbf{V}}
\def\bVE{\bfV_h^E}
\def\bWE{W_h^E}
\def\Cten{\boldsymbol{\mathcal{C}}}
\def\CE{\mathcal{E}}
\def\CT{\mathcal{T}}
\def\dim{\mathop{\mathrm{\,dim}}\nolimits}
\def\disp{\displaystyle}
\def\Div{\mathop{\mathbf{div}}\nolimits}
\def\div{\mathop{\mathrm{div}}\nolimits}
\def\ga{\boldsymbol{\gamma}}
\def\G{\Gamma}
\def\hCT{\widehat{\CT}}
\def\HdO{{H^2(\O)}}
\def\HuE{H^1(E)}
\def\Hu2E{H^2(E)}
\def\HsE{H^{s}(E)}
\def\HuO{{H^1(\O)}}
\def\HsO{{H^{s}(\O)}}
\def\I{\mathbf{I}}
\def\l{\lambda}
\def\LO{{L^2(\O)}}
\def\LE{{L^2(E)}}
\def\O{\Omega}
\def\PiO{\Pi_{0}^{E}}
\def\PiE{\Pi_{\varepsilon}^E}
\def\R{{\mathbb{R}}}
\def\rot{\mathop{\mathrm{rot}}\nolimits}
\def\Rot{\mathop{\mathbf{rot}}\nolimits}
\def\SZ{\mathrm{c}}
\def\tr{\mathop{\mathrm{tr}}\nolimits}
\newcommand{\norm}[1]{\left\|#1\right\|}
\newcommand\0{\boldsymbol{0}}
\newcommand\bn{\boldsymbol{n}}
\newcommand\bbP{\mathbb{P}}
\def\vv{{\mathsf v}}
\begin{document}



\title{Virtual Elements for a shear-deflection formulation of Reissner-Mindlin plates}

\author{L. Beir\~ao da Veiga}
\address{Dipartimento di Matematica e Applicazioni,
Universit\`a di Milano-Bicocca,
20125 Milano, Italy.}
\email{lourenco.beirao@unimib.it}
\author{D. Mora}
\address{Departamento de Matem\'atica, Universidad del B\'io-B\'io,
Casilla 5-C, Concepci\'on, Chile and CI$^2$MA, Universidad de Concepci\'on, Concepci\'on, Chile.}
\email{dmora@ubiobio.cl}
\thanks{The second author was partially supported by CONICYT-Chile
through FONDECYT project 1140791 (Chile) and by DIUBB through project 151408 GI/VC,
Universidad del B\'io-B\'io, (Chile)}
\author{G. Rivera}
\address{Departamento de Ciencias Exactas,
Universidad de Los Lagos, Casilla 933, Osorno, Chile.}
\email{gonzalo.rivera@ulagos.cl}
\thanks{The third author was supported by a CONICYT fellowship (Chile).}

\subjclass[2000]{Primary 65N30, 65N12, 74K20, 74S05, 65N15.}

\keywords{Virtual element method, Reissner--Mindlin plates, error analysis, polygonal meshes.}


\begin{abstract} 
We present a virtual element method for the Reissner--Mindlin plate bending
problem which uses shear strain and deflection as discrete variables
without the need of any reduction operator. The proposed method is
conforming in $[H^{1}(\O)]^2 \times H^2(\O)$ and has the advantages 
of using general polygonal meshes and yielding
a direct approximation of the shear strains.
The rotations are then obtained by a simple postprocess
from the shear strain and deflection.
We prove convergence estimates with involved constants that
are uniform in the thickness $t$ of the plate.
Finally, we report numerical experiments which allow us to assess
the performance of the method.
\end{abstract}

\maketitle




\section{Introduction}
\label{SEC:INTR}

The {\it Virtual Element Method} (VEM), introduced in \cite{BBCMMR2013,BBMR2014},
is a recent generalization of the Finite Element Method
which is characterized by the capability of dealing with
very general polygonal/polyhedral meshes.
The interest in numerical methods that can make use of general
polytopal meshes has recently undergone a significant growth
in the mathematical and engineering literature; among
the large number of papers on this subject, we cite as a minimal sample
\cite{AHSV,BBCMMR2013,BLMbook2014,CGH14,DPECMAME2015,RW,ST04,TPPM10}.


Indeed, polytopal meshes can be very useful for
a wide range of reasons, including meshing of
the domain (such as cracks) and data (such as inclusions) features,
automatic use of hanging nodes, use of moving meshes, adaptivity.
Moreover, the VEM presents the advantage to easily implement
highly regular discrete spaces.
Indeed, by avoiding the explicit construction of the local basis
functions, the VEM can easily handle general polygons/polyhedrons
without complex integrations on the element
(see \cite{BBMR2014} for details on the coding aspects of the method).
The Virtual Element Method has been applied successfully in a
large range of problems, see for instance
\cite{AABMR13,ABMVsinum14,ALM15,BBCMMR2013,BBMR2014,BLM2015,BMRR,BBBPS2016,BBPS2014,ultimo,BM12,CG2016,ChM-camwa,Paulino-VEM,MRR2015,PG2015,PPR15,WRR,ZChZ2016}.

The Reissner--Mindlin plate bending
problem is used to approximate the deformation of a thin
or moderately thick elastic plate. Nowadays, it is very well understood that
the discretization of this problem poses difficulties
due to the so called locking phenomenon when the thickness $t$ is
small with respect to the other dimensions of the plate.
Nevertheless, adopting for instance a
reduced integration or a mixed interpolation technique, this phenomenon can be avoided. Indeed,
several families of methods have been rigorously shown to be free from locking and optimally
convergent. We mention \cite{falk,Lova} for a thorough description and further references.


Recently, a new approach to solve the Reissner--Mindlin
bending problem has been presented in~\cite{BHKLNRS} by Beir\~ao da Veiga et al.
(see also \cite{EOB2013,LBC2012}).
In this case a variational formulation of the plate bending problem is written in terms of
shear strain and deflection with the advantage that the ``shear locking phenomenon''
is avoided. A discretization of the problem by Isogeometric Analysis
is proposed. Under some regularity assumptions on the exact solution,
optimal error estimates with constants independent of the plate thickness are proved.

The aim of this paper is on developing a Virtual Element Method
which applies to general polygonal (even non-convex) meshes
for Reissner-Mindlin plates. 
We consider a variational formulation written in
terms of shear strain and deflection presented in \cite{BHKLNRS}.
Here, we exploit the capability of VEM to built highly regular discrete spaces
and propose a conforming $[H^{1}(\O)]^2 \times H^2(\O)$ discrete formulation,
respectively for the shear strain and deflections.
The resulting bilinear form is continuous and elliptic
with appropriate $t$-dependent norms.
This method makes use of a very simple set of degrees of freedom,
namely 5 degrees of freedom per vertex of the mesh plus the number of edges,
and approximates directly the transverse shear strain, which is distinctive of this approach.
Moreover, the rotations are obtained by a simple postprocess
from the shear strain and deflection.
Under some regularity assumptions on the exact solution,
optimal error estimates (in the natural norms of the adopted formulation)
with constants independent of the plate
thickness are proved for all the involved variables. 
In addition, we present error estimates in weaker norms
using a duality argument.
Furthermore, let us remark that
it is possible to generalize the proposed scheme to a family
of high order methods, by considereing the $C^1(\O)$
family of elements in \cite{BM12} and combining it with
a VEM rotation space of higher degree.
Finally, we point out that, differently
from the finite element method where building globally $C^1(\O)$
functions is complicated, here the virtual deflection
space can be built with a rather simple construction
due to the flexibility of the virtual approach.
Moreover, the present analysis constitutes a stepping
stone towards the more challenging goal of devising virtual element
approximations for other problems, as laminated or stiffened plates, or shells.
In a summary, the advantages of the proposed method are the possibility
to use general polygonal meshes and a better conformity with the limit
Kirchhoff problem, ensuing from the $H^2(\O)$
approximation used for the discrete deflection.

The outline of this article is as follows: we introduce in
Section~\ref{SEC:Model} the Reissner-Mindlin plate model,
first in terms of deflection and rotations variables and
then in an equivalent form in terms of deflection and
transverse shear strain variable. In Section~\ref{SEC:Discrete},
we present the  discrete spaces for the shear strain and deflection,
together with their properties,
next, we construct the discrete bilinear forms
and the loading term. We end this section with the
presentation of the virtual element discrete formulation.
In Section~\ref{SEC:approximation}, we present 
the error analysis of the virtual scheme.
In Section~\ref{SEC:NUMER}, we report a couple of numerical tests that
allow us to assess the convergence properties of the method.

Throughout the paper, $\O$ is a generic Lipschitz bounded domain of $\R^2$. For $s\geq 0$,
$\norm{\cdot}_{s,\O}$ stands indistinctly for the norm of the Hilbertian
Sobolev spaces $\HsO$ or $[\HsO]^2$ with the convention
$H^0(\O):=\LO$. Finally, we employ $\0$ to denote a generic null vector
and we will denote with $C$ a generic constant which may take
different values in different occurrences, and which is independent of the mesh
parameter $h$ and the plate thickness $t$. 


\section{Continuous problem}
\label{SEC:Model}
Consider an elastic plate of thickness $t$, $0<t\le1$,
with reference configuration $\O\times(-t/2,t/2)$,
where $\O$ is a convex polygonal domain of $\R^2$ occupied
by the mid-section of the plate. The deformation of the plate
is described by means of the Reissner-Mindlin model in terms of the
rotations $\boldsymbol{\theta}=(\theta_1,\theta_2)$ of the fibers initially normal
to the plate mid-surface and the deflection $w$.
We subdivide the boundary $\G$ of $\O$ in three 
disjoint parts such that,
$$\G=\G_c\cup \G_s\cup\G_f.$$
The plate is assumed to be clamped on $\G_c$, simply supported on $\G_s$ 
and  free on $\G_f$. We assume that $\G_c$ has positive measure.
We denote by $\bn$ the outward unit normal vector
to $\G$, the following equations describe the plate response
to a conveniently scaled transverse load $g$:
\begin{equation}\label{Reissner-M}
\left\{\begin{array}{ll}
-\boldsymbol{\div} \Cten\varepsilon(\bthe)-\l 
t^{-2}(\nabla w-\bthe) &=\0\quad\quad in\,\ \O, 
\\
-\div(\l t^{-2}(\nabla w-\bthe))&=g \quad\quad in\,\ 
\O,\\
\bthe=\0,\ \ w=0&\quad\quad\quad on\,\ \ \G_c,\\
\Cten\varepsilon(\bthe)\bn  =\0,\ \ 
w=0&\quad\quad\quad on\,\ \ \G_s,\\
\Cten\varepsilon(\bthe)\bn  
=\0,\ \ (\bthe-\nabla w)=\0&\quad\quad\quad on\,\ \ \G_f,
\end{array}\right.
\end{equation}
where $\l:= \mathbb{E}k/2(1 + \nu)$ is the shear modulus, with $\mathbb{E}$ being the Young modulus,
$\nu$ the Poisson ratio, and $k$ a correction factor,
$\varepsilon(\bthe):=\frac{1}{2}(\nabla\bthe+(\nabla\bthe)^{t})$ is
the standard strain tensor, and $\Cten$ is the tensor of bending moduli,
given by (for isotropic materials)
\begin{equation*}
\label{ctao}
\Cten{\bsi}:=\dfrac{\mathbb{E}}{12(1-\nu^2)}
\left((1-\nu)\bsi+\nu\tr(\bsi)\I  \right),\qquad\bsi\in[L^2(\O)]^{2\times2},
\end{equation*}
where $\tr(\bsi)$ is trace of $\bsi$ and $\I$ is the identity tensor.

Let us consider the space
\begin{equation*}
 \widetilde{\bfX}:=\{(v,\beth)\in \HuO\times [\HuO]^2:
 v=0 \text{ on }\G_c\cup\G_s,\beth=\0\text{ on }\G_c\}.
\end{equation*}
By testing the system~\eqref{Reissner-M} with 
$(v,\beth)\in\widetilde{\bfX}$, integrating by parts and using the boundary conditions,
we write the following variational formulation:
\begin{problem}
\label{P_1}
Given $g\in\LO$, find $(w,\bthe)\in \widetilde{\bfX}$ such that 
\begin{equation*}
a(\bthe,\beth)+ b(\bthe-\nabla w,\beth-\nabla 
v)=(g,v)_{0,\O}\qquad \forall(v,\beth)\in\widetilde{\bfX},
\end{equation*}
where $(\cdot,\cdot)_{0,\O}$ denotes the inner-product
in $\LO$, and the bilinear forms are given by
\begin{equation*}
\begin{split}
a(\bthe,\beth)&:=(\Cten\varepsilon(\bthe),\varepsilon(\beth))_{0,\O},\\
b(\bthe,\beth)&:=\l t^{-2}(\bthe,\beth)_{0,\O}.
\end{split}
\end{equation*}
\end{problem}

The following result states that
the bilinear form appearing in Problem~\ref{P_1} is coercive
(see \cite[Proposition A.1]{BHKLNRS}).
 \begin{lemma}
There exists a positive constant $\alpha$ depending
only on the material constants and the domain $\O$ 
such that:
\begin{equation}\label{cota_0}
 a(\beth,\beth)+  b(\beth-\nabla v,\beth-\nabla v)\geq \alpha\left( 
\|\beth\|^2_{1,\O}+t^{-2}\|\beth-\nabla v\|^2_{0,\O}+\| v\|^2_{1,\O}  
\right)\quad \forall(v,\beth)\in \widetilde{\bfX}.
\end{equation}
\end{lemma}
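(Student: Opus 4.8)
The plan is to establish the coercivity estimate \eqref{cota_0} by controlling each of the three terms on the right-hand side separately, using the ellipticity of the tensor $\Cten$, a Korn-type inequality, and a triangle inequality to transfer control from $\beth-\nabla v$ to $v$. First I would exploit the fact that $\Cten$ is uniformly positive definite on symmetric tensors (which follows from the isotropic formula together with $0<\nu<1/2$); this gives a constant $\beta>0$ such that $a(\beth,\beth)=(\Cten\varepsilon(\beth),\varepsilon(\beth))_{0,\O}\geq\beta\,\|\varepsilon(\beth)\|_{0,\O}^2$. Since $\beth$ vanishes on $\G_c$, which has positive measure, a Korn inequality on $\widetilde{\bfX}$ yields $\|\varepsilon(\beth)\|_{0,\O}^2\geq C\,\|\beth\|_{1,\O}^2$, and hence $a(\beth,\beth)\geq C_1\|\beth\|_{1,\O}^2$. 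This handles the first term on the right of \eqref{cota_0}.

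Next I would note that the term $b(\beth-\nabla v,\beth-\nabla v)=\l t^{-2}\|\beth-\nabla v\|_{0,\O}^2$ directly controls the shear term $t^{-2}\|\beth-\nabla v\|_{0,\O}^2$ up to the constant $\l$, so that contribution is essentially free. The remaining task is to bound $\|v\|_{1,\O}^2$. Here I would use the triangle inequality to write $\|\nabla v\|_{0,\O}\leq\|\beth-\nabla v\|_{0,\O}+\|\beth\|_{0,\O}$, so that $\|\nabla v\|_{0,\O}^2$ is controlled by a combination of $\|\beth-\nabla v\|_{0,\O}^2$ (dominated by the shear term, using $t\leq 1$) and $\|\beth\|_{0,\O}^2\leq\|\beth\|_{1,\O}^2$ (already controlled by $a(\beth,\beth)$). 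Since $v$ vanishes on $\G_c\cup\G_s\supseteq\G_c$, a Poincar\'e--Friedrichs inequality then upgrades control of $\|\nabla v\|_{0,\O}$ to control of the full norm $\|v\|_{1,\O}$.

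Combining these three estimates with suitable positive weights, and absorbing the small cross contributions, produces a single constant $\alpha$ depending only on the material parameters and $\O$. The main subtlety to watch is the uniformity of $\alpha$ in the thickness $t$: the naive triangle-inequality bound for $\|\nabla v\|_{0,\O}^2$ introduces a factor $t^2$ in front of the shear term (when one writes $\|\beth-\nabla v\|_{0,\O}^2=t^2\cdot t^{-2}\|\beth-\nabla v\|_{0,\O}^2$), and one must verify that using $t\leq 1$ keeps this harmless rather than degenerating the constant as $t\to 0$. I expect this bookkeeping of the $t$-powers, rather than any single analytic inequality, to be the delicate point; the Korn and Poincar\'e inequalities themselves are standard given that $\G_c$ has positive measure.
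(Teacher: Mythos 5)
Your proposal is correct, and it is essentially the argument behind this lemma: the paper itself gives no proof, deferring to Proposition~A.1 of the cited reference \cite{BHKLNRS}, whose proof rests on the same ingredients you use (positive definiteness of $\Cten$, Korn's inequality for $\beth$ vanishing on $\G_c$ of positive measure, Poincar\'e--Friedrichs for $v$, and the triangle inequality with the $t\le 1$ observation to keep the constant uniform in $t$). Your handling of the $t$-power bookkeeping, absorbing $\|\beth-\nabla v\|_{0,\O}^2\le t^{-2}\|\beth-\nabla v\|_{0,\O}^2$, is exactly the right point to flag and is done correctly.
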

It is well known that the discretization of the Reissner-Mindlin equations
have difficulties due to the so called locking phenomenon
when the thickness $t$ is small with respect to the other dimensions of the plate.
To avoid this phenomenon we will introduce and analyze
an alternative formulation of the problem that 
does not suffer from such a drawback.
In order to simplify the notation, and without any 
loss of generality, we will assume $\l=1$ in the following.

\subsection{An equivalent variational formulation}

The variational formulation that will be considered here, was introduced in
the context of shells in \cite{EOB2013,LBC2012} and has been
studied in \cite{BHKLNRS} for Reissner-Mindlin plates using Isogeometric Analysis.

Now, we note that the equivalent formulation is derived
by simply considering the following change of variables:
\begin{equation}
\label{change}
 (w,\bthe)\longleftrightarrow (w,\boldsymbol{\gamma})\quad \text{ with } \quad
\bthe=\nabla w+\boldsymbol\gamma.
\end{equation}
We note that the physical 
interpretation of the variable $\boldsymbol\gamma$
corresponds to the transverse shear strain.

The equivalent formulation will be obtained by using the change of
the variables \eqref{change} in Problem~\ref{P_1}.

For the analysis we will consider the following
$t$-dependent energy norm:

\begin{equation}
 \label{norm}
 |||v,\bta|||^2:=\|\bta+\nabla 
v\|^2_{1,\O}+t^{-2}\|\bta\|^2_{0,\O}+\|v\|^2_{1,\O},
\end{equation}
for all sufficiently regular functions $\bta:\O\longrightarrow\R^2$
and $v:\O\longrightarrow\R$.

Now, we define the following variational spaces:
$$\widehat{\bfX}:=\overline{C^\infty(\O)\times[C^\infty(\O)]^2}^{
|||\cdot,\cdot|||};$$
$$\bfX:=\{(v,\bta)\in \widehat{\bfX}:v=0 \text{ on 
}\G_c\cup\G_s,\nabla v+\bta=\0\text{ on }\G_c  \}.$$
It is immediately verified that
$$\HdO\times [\HuO]^2\subset \widehat{\bfX}\subset \HuO\times 
[\LO]^2.$$
Moreover, note that the space $\bfX$ exactly corresponds to $\widetilde{\bfX}$ up to the 
change of variables \eqref{change}.

Let us introduce the equivalent variational formulation for the 
Reissner-Mindlin model as follows:
\begin{problem}
\label{P_2}
Given $g\in\LO$, find $(w,\ga)\in\bfX$ such that 
\begin{equation*}
\label{forvar.2}
a(\nabla w+\ga,\nabla v+\bta)+b(\ga,\bta)=(g,v)_{0,\O}\qquad\forall(v,\bta)\in \bfX.
\end{equation*}
\end{problem}

We have that Problem~\ref{P_2} is equivalent to Problem~\ref{P_1}
up to the change of variables \eqref{change}.
As a consequence, we have the following
coercivity property for the bilinear form on the left hand side
of Problem~\ref{P_2} (see \eqref{cota_0}):
\begin{equation}
\label{lax}
 a(\nabla v+\bta,\nabla v+\bta)+b(\bta,\bta)\geq \alpha 
|||v,\bta|||^2\quad \forall (v,\bta)\in \bfX,
\end{equation}
with same constant $\alpha$. Moreover, bilinear forms
$a(\cdot,\cdot)$ and $b(\cdot,\cdot)$ are bounded
uniformly in $t$.

Therefore, Problem~\ref{P_2} has a unique solution $(w,\ga)\in\bfX$ and
$$|||w,\ga|||\le C\Vert g\Vert_{0,\O}.$$

\section{Virtual element discretization}
\label{SEC:Discrete}

We begin this section, by recalling the mesh construction
and the shape regularity assumptions to introduce the discrete
virtual element spaces for the shear strain and deflection,
together with their properties, next, we will introduce
discrete bilinear forms and the loading term.
Finally, we end this section with the presentation of the
virtual element discretization of Problem~\ref{P_2}.

\subsection{Mesh regularity assumption}
\label{meshassup}

Let $\left\{\CT_h\right\}_h$ be a sequence of 
decompositions of $\O$ into polygons $E$.
Let $h_E$ denote the diameter of the element $E$ and
$h:=\disp\max_{E\in\CT_h}h_E$.

For the analysis, we will make the following
assumptions as in \cite{BBCMMR2013,BLV,BMRR}:
there exists a positive real number $C_{\CT}$ such that,
for every $h$ and every $E\in \CT_h$,
\begin{itemize}
\item[$\bA_1$:]  the ratio between the shortest edge
and the diameter $h_E$ of $E$ is larger than $C_{\CT}$;
\item[$\bA_2$:]  $E\in\CT_h$ is star-shaped with
respect to every point of a  ball
of radius $C_{\CT}h_E$.
\end{itemize}

For any subset $S\subseteq\R^2$ and nonnegative
integer $k$, we indicate by $\bbP_{k}(S)$ the space of
polynomials of degree up to $k$ defined on $S$.
To keep the notation simpler, we denote by $\bn$ a general normal
unit vector; in each case,  its precise definition will
be clear from the context and we denote by $\bt$ the tangent unit 
vector defined as the anticlockwise rotation of $\bn$.

To continue the construction of the discrete scheme,
we need some preliminary
definitions. First, we split the bilinear forms $a(\cdot,\cdot)$ and $b(\cdot,\cdot)$ introduced
in the previous section as follows:
\begin{eqnarray}
\label{formasa}
a\left(\nabla w+\ga, \nabla v+\bta \right)&=&\sum_{E\in\CT_h}a^E\left(\nabla w+\ga, \nabla 
v+\bta\right)\quad\forall (w,\ga), 
(v,\bta)\in \bfX,\\\label{formasb}
b\left(\ga,\bta\right)&=&\sum_{E\in\CT_h}b^E(\ga,\bta)\quad\qquad\forall \ga,\bta\in[\HuO]^2,
\end{eqnarray}
with
$$a^E\left(\nabla w+\ga, \nabla v+\bta 
\right):=\left(\Cten\varepsilon(\nabla w+ 
\bta),\varepsilon(\nabla v+\ga) \right)_{0,E}$$
and
$$b^E(\ga,\bta):=t^{-2}(\ga,\bta)_{0,E}.$$
Finally, we define
\begin{equation*}
\mathcal{A}((w,\ga), (v,\bta)):=a(\nabla w+\ga,\nabla 
v+\bta)+b(\ga,\bta)=\sum_{E\in\CT_h}\mathcal{A}^E((w,\ga), (v,\bta))
\quad \forall (w,\ga),(v,\bta)\in \bfX,
\end{equation*}
where
\begin{equation*}
\mathcal{A}^E((w,\ga),(v,\bta))
=a^E\left(\nabla w+\ga,\nabla v+\bta \right)+b^E\left(\ga,\bta\right).
\end{equation*}

In order to construct the discrete scheme associated to Problem~\ref{P_2},
in what follows, we will show that for each
$h>0$  it is possible to build the following:
\begin{enumerate}
 \item a discrete virtual space $\bfX_h\subseteq \bfX$ such that
 $$\bfX_h:=\{(v_h,\bta_h)\in (W_h\times\bfV_h):v_h=0 \text{ on 
}\G_c\cup\G_s,\nabla v_h+\bta_h=\0\text{ on }\G_c  \},$$
in which the virtual spaces 
$W_h\subseteq \HdO$ and $\bfV_h\subseteq [\HuO]^2 $;
\item a symmetric bilinear form $\mathcal{A}_h:\bfX_h\times\bfX_h\to\R$
which can be split as
\begin{equation}\label{localbili}
\mathcal{A}_h((w_h,\ga_h), (v_h,\bta_h)):=\sum_{E\in\CT_h}\mathcal{A}_h^E((w_h,\ga_h), (v_h,\bta_h))
\quad\forall(w_h,\ga_h),(v_h,\bta_h)\in \bfX_{h},
\end{equation}
with $\mathcal{A}_h^E(\cdot,\cdot)$ local bilinear forms on $\bfX_h|_{E} \times\bfX_h|_{E}$;
\item an element $g_h\in \bfX_h'$ and a discrete duality pair $\left<\cdot,\cdot\right>_h$
in such a way that the following  discrete problem:
Find $(w_h,\ga_h)\in \bfX_h$  such that 
\begin{equation}
\label{forvar3}
\mathcal{A}_h((w_h,\ga_h), (v_h,\bta_h))=\left<g_h,v_h\right>_h\qquad \forall 
(v_h,\bta_h)\in \bfX_h,
\end{equation}
admits  a unique solution $(w_h,\ga_h)\in \bfX_h$ and exhibits optimal approximation properties.
\end{enumerate}

\subsection{Discrete virtual spaces for shear strain and deflection}\label{sec:added:X}

We introduce a pair of finite dimensional spaces for shear strain and deflection:
$$\bfV_{h}\subseteq [\HuO]^{2},\qquad\qquad W_{h}\subseteq \HdO.$$

First, we construct the shear strain virtual space $\bfV_{h}$, inspired from \cite{ABMVsinum14}.
With this aim, we consider a simple polygon $E$ (meaning open simply connected
sets whose boundary is a non-intersecting line made of a finite number of straight line segments) and we define
\begin{equation*}
\label{espace-1}
\boldsymbol{\mathbb{B}}_{\partial E}:=\{\bta_{h}\in [C^0(\partial E)]^2: \bta_{h}\cdot 
\boldsymbol{t}|_{\partial E}\in 
\bbP_2(e)\text{ and }\bta_{h}\cdot 
\boldsymbol{n}|_{\partial E}\in 
\bbP_1(e)\ \ \forall e\in 
\partial E\}.
\end{equation*}
We then consider the finite dimensional space defined as follows:
\begin{equation*}
\label{space2}
\bVE:=\left\{\bta_{h}\in [\HuE]^2: \bta_{h}|_{\partial E}\in\boldsymbol{\mathbb{B}}_{\partial E},
\left\{\begin{array}{ll}
\disp -\Delta \bta_h  +\Rot s=\0 \text{ in } E,\\
\rot \bta_h\in\bbP_0(E),
\end{array}\right.
\text{ for some } s\in L^2(E)\right\}.
\end{equation*}
Note that the operators and equations above
are to be interpreted in the distributional sense.
The space $\bVE$ is well defined. Indeed, given
a (piecewise polynomial) boundary value
$\bta_{h}|_{\partial E}\in \boldsymbol{\mathbb{B}}_{\partial E}$,
the associated function $\bta_{h}$ inside the element $E$
is obtained by solving the Stokes-like variational problem
and using that
$$\rot \bta_{h}|_{E}=\dfrac{1}{|E|} \disp\int_{E}\rot \bta_{h}=\dfrac{1}{|E|}
\disp\int_{\partial E}\ \bta_{h}\cdot \boldsymbol{t}.$$
We observe that $\bta_{h}$ minimizes the $H^1(E)$-seminorm
over all the functions in $H^1(E)$ with constant $\rot$
and satisfying the fixed boundary condition on $\partial E$.

It is important to observe that, since the functions in $\bVE$
are uniquely identified by their boundary values, 
$\dim(\bVE)=\dim(\bVE|_{\partial E})$, i.e.,
$\dim(\bVE) = 3N_{E}$, with $N_E$ being the number of edges of $E$.
This leads to introducing
the following $3N_{E}$ degrees of freedom for the space $\bVE$:
\begin{itemize}
\item $\mathcal{V}_E^h$: the values of $\bta_h$ (vector) at the vertices of $E$.
\item $\CE_E^h$: the value of the 
\begin{equation*}
\disp \dfrac{1}{|e|}\int_{e}\bta_h\cdot\boldsymbol{t}\quad \forall \text{ 
edge }e\in \partial E.
\end{equation*}
\end{itemize}
Moreover, we note that as a consequence of the definition $\bVE$,
the output values of the two sets of degrees of freedom $\mathcal{V}_E^h$ and 
$\CE_E^h$ are sufficient to uniquely determine
$\bta_h\cdot\boldsymbol{t}$ and $\bta_h\cdot\boldsymbol{n}$ on the boundary of 
$E$, for any $\bta_h\in\bVE$. Finally, we note that clearly $[\bbP_1(E)]^2 \subset \bVE$.

For every decomposition $\CT_h$ of $\O$ into
simple polygons $E$, we define the global  space $\bfV_{h}$ without boundary conditions.
\begin{align*}
\label{uno}
\bfV_h:=\{\bta_{h}\in [\HuO]^2:\bta_h|_{E}\in \bVE\quad \forall E\in \CT_h\}.
\end{align*}
In agreement with the local choice of the degrees of freedom, in 
$\bfV_h$ we choose the following degrees of freedom:
\begin{itemize}
\item $\mathcal{V}^h$: the values of $\bta_h$ (vector) at the vertices of $\CT_h$.
\item $\CE^h$: the value of the 
\begin{equation*}
 \label{freedom2}
\disp \dfrac{1}{|e|}\int_{e}\bta_h\cdot\boldsymbol{t}\quad \forall
\text{ edge }e\in\CT_h.
\end{equation*}
\end{itemize}

Now, we will introduce the discrete virtual space $W_{h}$ for the deflection, see also \cite{BM12,ABMVsinum14}.
With this aim, we first define the following finite dimensional space:
\begin{equation*}
\begin{split}
\bWE:=\{v_{h}\in \Hu2E: \Delta^2v_{h}=0, v_{h}|_{\partial E}\in 
\bbP_3(e),\nabla v_{h}|_{\partial E}\in [C^0(\partial E)]^2\text{ and }
\partial_{\boldsymbol{n}}v_{h}|_{\partial E}\in \bbP_1(e)\,\,\forall e\in \partial E\},
\end{split}
\end{equation*}
where $\Delta^2$ represents the biharmonic operator. We observe
that any $v_{h}\in\bWE$ clearly satisfies the following conditions:
\begin{itemize}
\item the trace (and the trace of the gradient) on the boundary of $E$ is continuous; 
\item $\bbP_2(E)\subseteq\bWE$.
\end{itemize}
We choose in $\bWE$ the degrees of freedom introduced in
\cite[Section 2.2]{ABSVsinum16}, namely:
\begin{itemize}
\item $\mathcal{W}_E^h$: The values of $v_h$ and $\nabla v_h$ at the vertices of $E$.
\end{itemize}
We note that as a consequence of the definition $\bWE$,
the degrees of freedom $\mathcal{W}_E^h$
are sufficient to uniquely determine $v_h$ and $\nabla v_h$
on the boundary of $E$.

We now present the global virtual space for the deflection:
for every decomposition $\CT_h$ of $\O$ into
simple polygons $E$, we define (without boundary conditions).
\begin{align*}
\label{uno_0}
W_h:=\{v_{h}\in \HdO:v_{h}|_{E}\in \bWE\quad \forall E\in \CT_h  
\}.
\end{align*}
In agreement with the local choice of the degrees of freedom, in $W_h$
we choose the following degrees of freedom:
\begin{itemize}
  \item $\mathcal{W}^h$: the values of $v_h$ and $\nabla v_h$ at the 
vertices of $\CT_h$.
 \end{itemize}
 
As a consequence of the definition of local virtual spaces $\bVE$ and $\bWE$,
we have the following result which will be used in the forthcoming analysis.
\begin{proposition}
\label{contenido}
Let $E$ be a simple polygon with $N_{E}$ edges.
Then $\nabla W_h^{E}\subseteq\bVE$.
\end{proposition}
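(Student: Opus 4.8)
The plan is to pick an arbitrary $v_h\in\bWE$, set $\bta_h:=\nabla v_h$, and check the three defining requirements of $\bVE$ one at a time. The ambient regularity is free: since $v_h\in\Hu2E$ we have $\bta_h=\nabla v_h\in[\HuE]^2$, so it remains only to verify the boundary membership in $\boldsymbol{\mathbb{B}}_{\partial E}$ and the two interior (distributional) conditions.

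First I would verify $\bta_h|_{\partial E}\in\boldsymbol{\mathbb{B}}_{\partial E}$. Continuity of $\nabla v_h$ along $\partial E$ is part of the definition of $\bWE$. On each edge $e$ the tangential component $\bta_h\cdot\bt=\partial_{\bt}v_h$ is the arclength derivative of the edge trace $v_h|_e\in\bbP_3(e)$, hence lies in $\bbP_2(e)$; the normal component $\bta_h\cdot\bn=\partial_{\bn}v_h$ lies in $\bbP_1(e)$ directly by the definition of $\bWE$. This yields $\bta_h|_{\partial E}\in\boldsymbol{\mathbb{B}}_{\partial E}$.

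Next I would treat the interior conditions. The constraint $\rot\bta_h\in\bbP_0(E)$ is automatic, since $\rot\nabla v_h=0$. The remaining requirement is the existence of a scalar $s\in\LE$ with $-\Delta\bta_h+\Rot s=\0$, equivalently $\Rot s=\Delta\nabla v_h=\nabla(\Delta v_h)$. This is where the structure of $\bWE$ enters decisively: because $v_h$ is biharmonic, $\Delta^2 v_h=0$, the scalar field $q:=\Delta v_h$ is harmonic in $E$, and, $E$ being a simple polygon and hence simply connected, $q$ admits a harmonic conjugate $s$. The Cauchy--Riemann relations for $q+\mathrm{i}s$ read precisely $\nabla q=\Rot s$, so this $s$ realizes $-\Delta\bta_h+\Rot s=\0$ and completes the membership $\bta_h\in\bVE$.

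The only delicate point is this last step: realizing $\nabla(\Delta v_h)$ as $\Rot s$ for a scalar potential $s\in\LE$. The natural integrability obstruction is $\div\bigl(\nabla\Delta v_h\bigr)=\Delta^2 v_h$, which vanishes exactly by the biharmonicity built into $\bWE$; combined with the simple connectedness of $E$ this produces the stream function $s$, which, being up to an additive constant a harmonic conjugate of $q$, can be taken in $\LE$. Everything else reduces to differentiating the gradient and reading off the polynomial degrees on the edges, so I expect no further obstacle.
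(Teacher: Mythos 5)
Your proof follows the paper's route almost verbatim: the boundary verification (tangential component in $\bbP_2(e)$ as the arclength derivative of the $\bbP_3(e)$ trace, normal component in $\bbP_1(e)$ by definition), the observation $\rot(\nabla v_h)=0\in\bbP_0(E)$, and the key identity $\div\left(\Delta(\nabla v_h)\right)=\Delta^2 v_h=0$ combined with simple connectedness of $E$ are exactly the paper's steps. The difference lies only in how the stream function is produced, and that is where you leave a genuine gap: you assert that the harmonic conjugate $s$ of $q:=\Delta v_h$ ``can be taken in $\LE$'', but nothing in your argument delivers this. Classical complex analysis on a simply connected domain gives a conjugate $s$ that is smooth \emph{in the interior} of $E$; it says nothing about square-integrability of $s$ up to $\partial E$, and $s\in\LE$ is precisely what the definition of $\bVE$ demands. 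Interior smoothness plus harmonicity is not enough: for $z_0\in\partial E$ the function $\mathrm{Re}\bigl(1/(z-z_0)\bigr)$ is harmonic and smooth in $E$ yet fails to be in $\LE$. Nor can you argue through the pointwise identity $|\nabla s|=|\nabla q|$, since a harmonic $q\in\LE$ need not have $\nabla q\in[\LE]^2$.

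The missing ingredient is a functional-analytic theorem rather than complex analysis: since $q\in\LE$, the field $\nabla q$ belongs to $[H^{-1}(E)]^2$ and is divergence free, and on a bounded simply connected Lipschitz domain every such field equals $\Rot s$ for some $s\in\LE$. This de Rham/stream-function result is exactly what the paper invokes at this point (the citation to Brezzi--Fortin, Proposition VII.3.4). With that theorem quoted, your argument closes --- indeed the $s$ it produces coincides, up to an additive constant, with the harmonic conjugate you constructed, and your Cauchy--Riemann computation $\nabla q=\Rot s$ is correct in the paper's sign convention. So the structure of your proof is right and matches the paper's; what must change is that the unsupported sentence ``can be taken in $\LE$'' be replaced by an appeal to this stream-function theorem (or an equivalent Ne\v{c}as-type closed-range argument), since that integrability statement is the one nontrivial analytic fact on which the proposition rests.
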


\begin{proof}
Let $v_h\in\bWE$, then we have that:
$v_h\in \Hu2E$, $\Delta^{2}v_{h}=0$, $v_h|_{e}\in \bbP_3(e)$
and $\nabla v_h\cdot \boldsymbol{n}|_{e}\in \bbP_1(e)$ for all $e\in \partial E$.
Hence, $\nabla v_h\in [\HuE]^2$, $\nabla v_h\cdot \boldsymbol{t}|_{e}\in\bbP_2(e)$
and $\nabla v_h\cdot \boldsymbol{n}|_{e}\in \bbP_1(e)$ for all $e\in \partial E$, i.e,
$\nabla v_h|_{\partial E}\in \boldsymbol{\mathbb{B}}_{\partial E}$.
Moreover, $\rot(\nabla v_{h})=0\in \bbP_0(E)$.
On the other hand, we have that
$$0=\Delta^{2}v_{h}=\Delta\left(\Delta v_{h}\right)
=\Delta\left( \div(\nabla v_{h})\right)=\div\left(\Delta (\nabla v_{h})\right).$$
Since a star-shaped polygon $E$ is simply connected,
there exists $q\in\LE$ such that $\Delta (\nabla v_{h})=\Rot q$
(see \cite[Proposition VII.3.4]{BF}).
Thus, $\nabla v_{h}\in\bVE$. The proof is complete.
\end{proof}

Finally, once we have defined $\bfV_{h}$ and $W_{h}$,
we are able to introduce
our virtual element space $\bfX_h$. 
\begin{equation*}
\label{spaceh}
\bfX_h:=\{(v_h,\bta_h)\in W_h\times \bfV_h\}\cap 
\bfX.
\end{equation*}

\subsection{Bilinear forms and the loading term}

In this section we will discuss the construction of the discrete version
of the local bilinear forms $a^{E}(\cdot,\cdot)$ (cf \eqref{formasa})
and $b^{E}(\cdot,\cdot)$ (cf \eqref{formasb}), which will
be used to built the local bilinear form appearing in \eqref{localbili}.
Moreover, we will discuss the construction of the loading term
appearing in \eqref{forvar3}.

We define the projector
$\PiE:\bVE\longrightarrow [\mathbb{P}_1(E)]^2\subset \bVE$
for each $\bta_h\in\bVE$ as the solution of
\begin{align}
\label{proje_0}
\left\{\begin{array}{ll}
& a^E(\bp,\PiE\bta_h) = a^E(\bp,\bta_h)
\quad \forall \bp\in [\bbP_1(E)]^2,\\\\
& \left<\left<\bp,\PiE\bta_h\right>\right>=\left<\left<\bp,
\bta_h\right>\right>\quad\forall \bp\in ker(a^E(\cdot,\cdot)), 
\end{array}\right.
\end{align}
where for all ${\bf r}_h,{\bf s}_h$ in $\bVE$
$$
\left<\left<{\bf r}_h,{\bf s}_h\right>\right>:=\disp\dfrac{1}{N_{E}}\sum_{i=1}^{N_{E}}
{\bf r}_h(\vv_i)\cdot {\bf s}_h (\vv_i),\quad\vv_i=\text{ vertices of }E , \; 1\leq i\leq N_{E} .
$$
We note that the second equation in \eqref{proje_0}
is needed for the problem to be well-posed. In fact, it is easy to
check that it returns one (and only one) function
$\PiE\bta_h\in [\bbP_1(E)]^2$. Moreover,
we observe that the local degrees of freedom allow us to compute
exactly the right hand side of \eqref{proje_0}. Indeed,
for all $\bp\in [\bbP_1(E)]^2$, we have
\begin{align*}
\nonumber
a^{E}(\bp,\bta_{h})=\int_{E}\Cten\varepsilon(\bp)\colon\varepsilon(\bta_h)&=
-\int_{E}\Div(\Cten\varepsilon(\bp))\cdot \bta_h+\int_{\partial 
E}\left(\Cten\varepsilon(\bp) 
\bn \right)\cdotp\bta_h\\
&= \int_{\partial 
E}\left(\Cten\varepsilon(\bp) 
\bn \right)\cdotp\bta_h,
\label{proye}
\end{align*}
where we have used that $\Div(\Cten\varepsilon(\bp))=\0$.
Therefore, since the functions $\bta_h\in \bVE$ are known explicitly on the boundary,
the right hand side of \eqref{proje_0} can be computed exactly without knowing $\bta_h$
in the interior of $E$. 
As a consequence, the projection operator $\PiE$ is computable solely on the basis of the degrees of freedom values.

Let $\PiO:\bVE\to[\bbP_0(E)]^2$ be the $[L^2(E)]^2$-projector, defined  by
\begin{equation*}
\label{pi0}
\int_E\PiO\bta_h\cdot\bp_0=\int_E\bta_h\cdot\bp_0\quad\forall \bp_0\in[\bbP_0(E)]^2.
\end{equation*}
We note that as before, the right hand side above is computable.
In fact, we consider a simple polygon $E$ with barycenter $\bx_{E}=(x_{E},y_{E})^{t}$
and we have that any $\bp_0\in[\bbP_0(E)]^2$ can be written as
$\bp_0=\alpha(1,0)^{t}+\beta(0,1)^{t}=\alpha\Rot(y-y_{E})+\beta\Rot(x_{E}-x)$.
Thus, for all $\bta_h\in \bVE$ we have
\begin{align*}
\int_{E}\bta_h\cdot(1,0)^{t}&=\int_{E}\bta_h\cdot\Rot(y-y_{E})
= \int_{E}\rot \bta_h(y-y_{E})
-\int_{\partial 
E}\left(\bta_{h} 
\cdot\boldsymbol{t} \right)(y-y_{E})\\
&=\rot \bta_h\int_{E}(y-y_{E})
-\int_{\partial 
E}\left(\bta_{h} 
\cdot\boldsymbol{t} \right)(y-y_{E})
=-\int_{\partial 
E}\left(\bta_{h} 
\cdot\boldsymbol{t} \right)(y-y_{E}),
\end{align*}
where we have used that for $\bta_h\in \bVE$, $\rot\bta_h\in\bbP_0(E)$.
Using the same arguments, we get
\begin{align}
\nonumber
\int_{E}\bta_h\cdot(0,1)^{t}=-\int_{\partial 
E}\left(\bta_{h} 
\cdot\boldsymbol{t} \right)(x_{E}-x),
\end{align}
which shows that $\PiO\bta_h$ is computable solely on the basis of the degree of freedom values.

Let now $S^E(\cdot,\cdot)$ and $S_0^E(\cdot,\cdot)$
be any symmetric
positive definite bilinear forms to be chosen as to satisfy 
\begin{align}
&\disp c_0a^{E}(\bta_h,\bta_h )\leq S^E(\bta_h,\bta_h)\leq\disp c_1a^{E}(\bta_h,\bta_h )
\quad\forall\bta_h\in\bVE 
\ \textrm{ with } \Pi_{\varepsilon}^E \bta_h = 0,\label{stabilS}\\
&\disp \tilde{c}_0 b^E(\bta_h,\bta_h)\leq S_0^E(\bta_h,\bta_h)\leq\disp \tilde{c}_1b^{E}(\bta_h,\bta_h)
\quad\forall\bta_h\in\bVE,
\label{stabilS0}
\end{align}
for some positive constants $c_0$, $c_1$, $\tilde{c}_0$ and $\tilde{c}_1$
depending only on the constant $C_{\CT}$ from mesh assumptions $\bA_1$ and $\bA_2$.
Then, we introduce on each element $E$ the local (and computable) bilinear forms 
\begin{equation*}
\begin{split}
a_h^E(\ga_h,\bta_h):=a^E(\PiE\ga_h,\PiE\bta_h)
+S^E(\ga_h-\PiE\ga_h,\bta_h-\Pi_{\varepsilon}^E\bta_h)\qquad \ga_h,\bta_h\in \bVE,\\
b_h^E(\ga_h,\bta_h):=b^E(\PiO\ga_h,\PiO\bta_h)+S_0^E(\ga_h-\Pi_{0}^E\ga_h,\bta_h-\PiO\bta_h)
\qquad \ga_h,\bta_h\in \bVE.
\end{split}
\end{equation*}

Now, we define in a natural way
\begin{equation*}
\label{bilinearforms}
a_{h}(\ga_h,\bta_h):=\sum_{E\in\CT_{h}}a_{h}^{E}(\ga_h,\bta_h),
\qquad b_{h}(\ga_h,\bta_h):=\sum_{E\in\CT_{h}}b_{h}^{E}(\ga_h,\bta_h)\qquad \ga_h,\bta_h\in \bfV_{h}.
\end{equation*}

The construction of $a_h^E(\cdot,\cdot)$ and $b_h^E(\cdot,\cdot)$
guarantees the usual consistency and stability properties of VEM,
as noted in the Proposition below.
Since the proof follows standard arguments
in the Virtual Element literature (see \cite{BBCMMR2013,BLRXX}) it is omitted.
\begin{proposition}
The local bilinear forms $a_h^E(\cdot,\cdot)$ and $b_h^E(\cdot,\cdot)$ on each element $E$ satisfy
\begin{itemize}
 \item Consistency: for all $h>0$ and for all $E\in\CT_h$ we have that
\begin{align}
\label{consis0}
a_h^E\left(\bp,\bta_h\right)&=a^E(\bp,\bta_h)\quad\forall\bp\in[\bbP_1(E)]^2,\; \forall\bta_h\in\bVE;\\
 \label{consis1}
b_h^E\left(\bp_0,\bta_h\right)&=b^E(\bp_0,\bta_h)\quad\forall\bp_0\in[\bbP_0(E)]^2,\; \forall\bta_h\in\bVE.
 \end{align}
 \item Stability: there exist positive constants
 $\alpha_{*}$, $\alpha^{*}$, $\beta_{*}$ and $\beta^{*}$, independent of $h$ and $E$, such that
\begin{align}
\alpha_{*}a^E(\bta_h,\bta_h)&\leq a_h^E(\bta_h,\bta_h)\leq\alpha^{*}a^E(\bta_h,\bta_h)\quad\forall 
\bta_h\in\bVE,\quad \forall E\in \CT_{h},\label{stab0}\\
\beta_{*}b^E(\bta_h,\bta_h)&\leq b_h^E(\bta_h,\bta_h)\leq\beta^{*}b^E(\bta_h,\bta_h)
\quad\forall\bta_h\in \bVE,\quad\forall E\in \CT_{h}.\label{stab1}
\end{align}
\end{itemize}
\end{proposition}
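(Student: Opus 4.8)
The plan is to establish the two properties separately and element by element, relying only on the defining relations of the projectors $\PiE$ and $\PiO$ together with the stabilization hypotheses \eqref{stabilS}--\eqref{stabilS0}. For the consistency identity \eqref{consis0}, I would first use that $\PiE$ reproduces linear vector fields, so that $\PiE\bp=\bp$ for every $\bp\in[\bbP_1(E)]^2$ (this follows from the uniqueness of the solution of \eqref{proje_0}, since a polynomial trivially satisfies both defining conditions). Consequently the stabilization contribution $S^E(\bp-\PiE\bp,\bta_h-\PiE\bta_h)$ vanishes identically, and what remains is $a^E(\PiE\bp,\PiE\bta_h)=a^E(\bp,\PiE\bta_h)$; the first line of \eqref{proje_0} then lets me replace $\PiE\bta_h$ by $\bta_h$ against a linear test field, yielding $a^E(\bp,\bta_h)$. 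The argument for \eqref{consis1} is formally identical: $\PiO$ fixes constant vector fields, so the $S_0^E$ term drops out, and since $\PiO$ is the $[L^2(E)]^2$-orthogonal projection onto $[\bbP_0(E)]^2$ one has $(\bp_0,\PiO\bta_h)_{0,E}=(\bp_0,\bta_h)_{0,E}$, whence $b^E(\bp_0,\PiO\bta_h)=b^E(\bp_0,\bta_h)$.

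For the stability bounds \eqref{stab0}, the key preliminary step is the Pythagorean splitting
\begin{equation*}
a^E(\bta_h,\bta_h)=a^E(\PiE\bta_h,\PiE\bta_h)+a^E(\bta_h-\PiE\bta_h,\bta_h-\PiE\bta_h),
\end{equation*}
whose only nontrivial ingredient is the $a^E$-orthogonality $a^E(\bp,\bta_h-\PiE\bta_h)=\0$ for all $\bp\in[\bbP_1(E)]^2$ — exactly a rearrangement of the first relation in \eqref{proje_0} — applied with $\bp=\PiE\bta_h\in[\bbP_1(E)]^2$ to annihilate the cross term. Next I would apply \eqref{stabilS} to the function $\bta_h-\PiE\bta_h$, which is admissible because $\PiE$ is idempotent and hence $\PiE(\bta_h-\PiE\bta_h)=\0$. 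Inserting the resulting two-sided control of $S^E(\bta_h-\PiE\bta_h,\bta_h-\PiE\bta_h)$ by $a^E(\bta_h-\PiE\bta_h,\bta_h-\PiE\bta_h)$ into the definition of $a_h^E(\bta_h,\bta_h)$ and comparing with the Pythagorean identity gives \eqref{stab0} with the explicit constants $\alpha_*=\min\{1,c_0\}$ and $\alpha^*=\max\{1,c_1\}$. The bound \eqref{stab1} follows by the same template, now with $\PiO$ in place of $\PiE$: since $\PiO$ is $[L^2(E)]^2$-orthogonal it is also $b^E$-orthogonal (the form $b^E$ being a scalar multiple of the $L^2$ product), so the analogous decomposition holds, and \eqref{stabilS0} yields $\beta_*=\min\{1,\tilde{c}_0\}$ and $\beta^*=\max\{1,\tilde{c}_1\}$.

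The only delicate point I anticipate is the interplay between the stabilization hypothesis \eqref{stabilS} and the kernel of $a^E(\cdot,\cdot)$: the form $a^E$ is only positive semidefinite, its kernel being the infinitesimal rigid motions, which lie in $[\bbP_1(E)]^2$ and are therefore reproduced by $\PiE$. I would want to confirm that for $\bta_h$ with $\PiE\bta_h=\0$ the quantity $a^E(\bta_h,\bta_h)$ does not degenerate — which holds precisely because any rigid-motion component would survive the projection, so $\PiE\bta_h=\0$ forces $a^E(\bta_h,\bta_h)>0$ unless $\bta_h=\0$. This is exactly what makes \eqref{stabilS} a consistent assumption and what permits the constants to be extracted uniformly; everything else is bookkeeping with the two Pythagorean identities.
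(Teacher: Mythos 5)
Your proof is correct and is exactly the standard argument the paper has in mind: the paper omits the proof of this proposition, citing the standard Virtual Element literature, and your reasoning — polynomial reproduction $\PiE\bp=\bp$ and $\PiO\bp_0=\bp_0$ to kill the stabilization term and reduce consistency to the defining relations of the projectors, followed by the Pythagorean splittings induced by the $a^E$- and $b^E$-orthogonality of $\PiE$ and $\PiO$ together with \eqref{stabilS}--\eqref{stabilS0} to get $\alpha_*=\min\{1,c_0\}$, $\alpha^*=\max\{1,c_1\}$, $\beta_*=\min\{1,\tilde{c}_0\}$, $\beta^*=\max\{1,\tilde{c}_1\}$ — is precisely that standard proof. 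Your closing remark on the kernel is also the right justification that \eqref{stabilS} is a consistent hypothesis: any $\bta_h$ with $a^E(\bta_h,\bta_h)=0$ is a rigid motion, hence lies in $[\bbP_1(E)]^2$ and is fixed by $\PiE$, so $\PiE\bta_h=\0$ forces $\bta_h=\0$.
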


We note that as a consequence of \eqref{stab0}
and \eqref{stab1}, the bilinear forms $a_h^E(\cdot,\cdot)$
and $b_h^E(\cdot,\cdot)$ are bounded with respect to the $H^1$ and $L^2$ norms, respectively.

We now discuss the construction of the loading term.
For every $E \in\CT_h$ we approximate the data
$g$ by a piecewise constant function $g_h$ on each element $E$
defined as the $L^{2}(E)$-projection of the load $g$
(denoted by $\bar{g}_{E}$). Let the loading term
\begin{equation}
\label{gh}
\left<g_h,v_h\right>_h:=\sum_{E\in\CT_h}\bar{g}_{E}\sum_{i=1}^{N_E}v_h(\vv_i)
\omega_{E}^{i}.
\end{equation}
where $\vv_1,\ldots,\vv_{N_E}$ are the vertices of $E$ and
$\omega_{E}^{1},\ldots,\omega_{E}^{N_E}$ are positive weights
chosen to provide the exact integral on $E$ when applied to linear
functions.

\subsection{Discrete problem}\label{method}

The results of the previous sections allow us to introduce
the discrete VEM in shear strain-deflection formulation
for the approximation of the
continuous Reissner-Mindlin formulation presented
in Problem~\ref{P_2}.

With this aim, we first note that since $\nabla\bWE\subset\bVE$ (see Proposition~\ref{contenido}),
the operator $\PiE$ can be also applied to $\nabla v_{h}$ for all $v_{h}\in \bWE$.
Hence, we introduce the following VEM discretization for the approximation of Problem~\ref{P_2}.
\begin{problem}
\label{P_4}
Find $(w_h,\ga_h)\in \bfX_h$ such that 
\begin{equation}
\label{forvar_3X}
a_h(\nabla w_h+\ga_h,\nabla v_h+\bta_h)+b_h(\ga_h,\bta_h)=\left<g_h,v_h\right>_h\qquad \forall 
(v_h,\bta_h)\in \bfX_h.
\end{equation}
\end{problem}

The next lemma shows that the problem above is coercive in the $\left||| \cdot ||\right|$ norm.
\begin{lemma}
\label{ha-elipt-disc}
There exists $\beta>0$, independent of $h$ and $t$ such that
$$
a_h(\nabla v_h+\bta_h,\nabla v_h+\bta_h)+b_h(\bta_h,\bta_h)\ge\beta\left|||(v_h,\bta_h)||\right|^2
\qquad\forall(v_h,\bta_h)\in \bfX_h.
$$
\end{lemma}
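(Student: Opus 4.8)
The plan is to prove the discrete coercivity by leveraging the continuous coercivity estimate \eqref{lax} together with the stability properties \eqref{stab0} and \eqref{stab1} of the local discrete bilinear forms. The key observation is that the discrete bilinear form $a_h$ is built from $a^E$ via projections and stabilizers, so its diagonal entries are spectrally equivalent to those of $a$, uniformly in $h$; the same holds for $b_h$ versus $b$ with constants independent of $t$, since the stabilizer $S_0^E$ scales like $b^E$ (note the factor $t^{-2}$ is carried inside $b^E$, so the constants $\beta_*,\beta^*$ are $t$-independent). I would therefore aim to transfer the continuous ellipticity to the discrete setting by a chain of inequalities that never loses track of the $t$-dependence.

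First I would fix $(v_h,\bta_h)\in\bfX_h$ and sum the lower bounds in \eqref{stab0} and \eqref{stab1} over all $E\in\CT_h$. Since $\nabla v_h + \bta_h$ is an admissible argument (recalling $\nabla\bWE\subseteq\bVE$ from Proposition~\ref{contenido}, so $\nabla v_h+\bta_h\in\bfV_h$), I can apply \eqref{stab0} to $\nabla v_h+\bta_h$ and \eqref{stab1} to $\bta_h$, obtaining
\begin{equation*}
a_h(\nabla v_h+\bta_h,\nabla v_h+\bta_h)+b_h(\bta_h,\bta_h)
\ge \alpha_* \, a(\nabla v_h+\bta_h,\nabla v_h+\bta_h)+\beta_*\, b(\bta_h,\bta_h).
\end{equation*}
Setting $c:=\min\{\alpha_*,\beta_*\}$ bounds the right-hand side below by $c\bigl(a(\nabla v_h+\bta_h,\nabla v_h+\bta_h)+b(\bta_h,\bta_h)\bigr)$.

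Next I would invoke the continuous coercivity \eqref{lax}, which is valid for all $(v,\bta)\in\bfX$ and in particular for the discrete pair $(v_h,\bta_h)$ since $\bfX_h\subseteq\bfX$. This gives
\begin{equation*}
a(\nabla v_h+\bta_h,\nabla v_h+\bta_h)+b(\bta_h,\bta_h)\ge \alpha\,|||v_h,\bta_h|||^2,
\end{equation*}
so that the composite constant $\beta:=c\,\alpha=\alpha\min\{\alpha_*,\beta_*\}$ furnishes the claimed bound. Since $\alpha$ depends only on the material constants and $\O$, and $\alpha_*,\beta_*$ depend only on $C_{\CT}$ (hence not on $h$ or $t$), the resulting $\beta$ is independent of both $h$ and $t$, as required.

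I do not expect a serious obstacle here; the argument is essentially a transfer lemma. The only point deserving care is the uniformity in $t$: one must verify that the stabilization constants $\beta_*,\beta^*$ in \eqref{stab1} genuinely do not absorb any hidden $t^{-2}$ factor. This is guaranteed by construction, because $S_0^E$ is designed in \eqref{stabilS0} to satisfy $\tilde c_0\,b^E\le S_0^E\le\tilde c_1\,b^E$ with $\tilde c_0,\tilde c_1$ depending only on $C_{\CT}$, so the $t^{-2}$ scaling sits uniformly on both sides of the equivalence and cancels in the ratio. Once this is noted, the proof is complete.
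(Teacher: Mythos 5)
Your proof is correct and follows essentially the same route as the paper's own one-line argument: combine the stability bounds \eqref{stab0}--\eqref{stab1} (applicable to $\nabla v_h+\bta_h$ thanks to Proposition~\ref{contenido}) with the continuous coercivity \eqref{lax}, which holds for $(v_h,\bta_h)$ since $\bfX_h\subseteq\bfX$. Your explicit constant $\beta=\alpha\min\{\alpha_*,\beta_*\}$ is what the chain of inequalities actually yields (the paper states $\beta:=\min\{C_*,\alpha\}$, a slightly loose bookkeeping of the same estimate), and your remarks on the $t$-uniformity of $\beta_*$ and on why $\nabla v_h+\bta_h$ is an admissible argument make explicit two points the paper leaves implicit.
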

\begin{proof}
Thanks to \eqref{stab0}, \eqref{stab1} and \eqref{lax}, we have that
\begin{equation*}
a_h(\nabla v_h+\bta_h,\nabla v_h+\bta_h)+b_{h}(\bta_h,\bta_h)\geq C_{*}\left(a(\nabla v_h+\bta_h,\nabla 
v_h+\bta_h)+b(\bta_h,\bta_h)\right)\geq \beta\left|||(v_h,\bta_h)||\right|^2,
\end{equation*}
with $\beta:=\min\left\{C_{*},\alpha\right\}$.
\end{proof}

We deduce immediately from Lemma \ref{ha-elipt-disc} that Problem \ref{P_4} is well-posed.

\begin{remark}
The solution of Problem~\ref{P_2} delivers the shear strain and deflection. In addition,
it is possible to readily obtain the rotations $\bthe$ by recalling \eqref{change}.
At the discrete level, this strategy corresponds to computing the rotations
as a post-processing of the shear strain and deflection.
If $(w_h,\ga_h)\in \bfX_h$ is the unique solutions of Problem~\ref{P_4}, then the function
$$\bthe_h=\nabla w_h+\ga_h,$$
is an approximation of the rotations.
The accuracy of such approximation will be established in the following section.
\end{remark}

\section{Convergence analysis}
\label{SEC:approximation}

In the present section, we develop an error analysis
for the discrete virtual element scheme presented
in Section~\ref{method}.
For the forthcoming analysis, we will assume
that the mesh assumptions $\bA_1$ and $\bA_2$,
introduced in Section~\ref{meshassup}, are satisfied.

For the analysis we will introduce the broken $H^{1}$-norm:
$$\|v\|_{1,h,\O}^{2}:=\sum_{E\in\CT_h}\|v\|_{1,E}^{2},$$
which is well defined for every $v\in L^{2}(\O)$ such that
$v|_{E}\in \HuE$ for all polygon $E\in \CT_{h}$.

Moreover, we recall the following result
which are derived by interpolation
between Sobolev spaces (see, for instance \cite{Bergh-Lofstrom})
from the analogous result for integer values of $s$.
In its turn, the result for integer values is stated
in \cite[Proposition 4.2]{BBCMMR2013} and follows from the
classical Scott-Dupont theory (see \cite{BS-2008}).

\begin{proposition}
\label{estima1}
There exists a constant $C>0$,
such that for every $v\in[\HsE]^d$, $d=1,2$ there exists
$v_{\Pi}\in [\bbP_k(E)]^d$, $k\geq 0$ such that
 \begin{eqnarray*}
\arrowvert v-v_{\Pi}\arrowvert_{l,E}\leq C h_E^{s-l}|v|_{s,E}\quad 0\leq s \leq k+1, l=0,\ldots,[s].
\end{eqnarray*}
with $[s]$ denoting largest integer equal or smaller than $s \in {\mathbb R}$.
\end{proposition}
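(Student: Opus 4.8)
The plan is to reduce everything to the scalar case $d=1$, the vector case $d=2$ then following by applying the estimate to each component and summing squares, and to treat integer and non-integer $s$ separately, exactly as the two cited ingredients suggest. The whole construction hinges on the mesh assumption $\bA_2$: since $E$ is star-shaped with respect to a ball of radius $C_{\CT}h_E$, one may fix a \emph{single} linear projection $\Pi\colon \HsE\to\bbP_k(E)$ — for instance the averaged Taylor (Sobolev) polynomial of degree $k$ — that reproduces $\bbP_k(E)$ and whose stability constants depend only on $k$ and $C_{\CT}$, hence are uniform over the whole mesh family.

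First I would settle the integer case. For an integer $m$ with $0\le m\le k+1$ and $l=0,\ldots,m$, the classical Scott--Dupont / Bramble--Hilbert theory (as recorded in \cite[Proposition 4.2]{BBCMMR2013}, and ultimately in \cite{BS-2008}), applied on the star-shaped element $E$, yields a single polynomial $v_{\Pi}=\Pi v\in\bbP_k(E)$ with
\begin{equation*}
|v-v_{\Pi}|_{l,E}\le C\,h_E^{\,m-l}\,|v|_{m,E},\qquad l=0,\ldots,m ,
\end{equation*}
the constant $C$ depending only on $k$ and $C_{\CT}$. The point worth stressing is that the \emph{same} $v_{\Pi}$ works for all $l$ simultaneously, which is precisely what the statement demands.

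For non-integer $s$, set $m_0:=[s]$ and $m_1:=m_0+1$; since $s\le k+1$ with $s$ non-integer, $m_0\le k$ and $m_1\le k+1$, and moreover $l\le m_0<m_1$, so the integer estimate is available at both $m_0$ and $m_1$. Mapping $E$ affinely onto a reference element $\widehat E$ of unit diameter (admissible uniformly thanks to $\bA_2$), the operator $\widehat T:=I-\widehat\Pi$ is bounded from $H^{m_0}(\widehat E)$ and from $H^{m_1}(\widehat E)$ into $H^{l}(\widehat E)$. Because real interpolation gives $[H^{m_0}(\widehat E),H^{m_1}(\widehat E)]_{\theta,2}=H^{s}(\widehat E)$ for $\theta=s-m_0$ (see \cite{Bergh-Lofstrom}), the operator $\widehat T$ is bounded from $H^{s}(\widehat E)$ into $H^{l}(\widehat E)$. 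To replace the full interpolation norm on the right by the seminorm $|\widehat v|_{s,\widehat E}$, I would exploit that $\widehat T$ annihilates $\bbP_k(\widehat E)$: writing $\widehat T\widehat v=\widehat T(\widehat v-\widehat p)$ for arbitrary $\widehat p\in\bbP_k(\widehat E)$, taking the infimum over $\widehat p$, and invoking the fractional Deny--Lions estimate $\inf_{\widehat p}\|\widehat v-\widehat p\|_{s,\widehat E}\le C\,|\widehat v|_{s,\widehat E}$ (valid since $s\le k+1$), one obtains $|\widehat T\widehat v|_{l,\widehat E}\le C\,|\widehat v|_{s,\widehat E}$. Scaling back to $E$ then reinstates the factor $h_E^{\,s-l}$ and produces the claimed bound.

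The main obstacle is this fractional step. Real interpolation only delivers boundedness with respect to the \emph{full} Sobolev interpolation norm, so the reduction to the seminorm $|v|_{s,E}$ with the correct power $h_E^{\,s-l}$ genuinely requires both the polynomial invariance of $\Pi$ and a uniform fractional Deny--Lions inequality on $\widehat E$; passing through the reference element is what keeps every constant independent of $h_E$, again by virtue of $\bA_2$.
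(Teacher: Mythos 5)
Your proposal is correct and takes essentially the same route as the paper, which also handles integer $s$ via the Scott--Dupont theory (through \cite[Proposition 4.2]{BBCMMR2013} and \cite{BS-2008}) and then obtains non-integer $s$ by real interpolation of Sobolev spaces, citing \cite{Bergh-Lofstrom}. The details you add — interpolating the operator $I-\Pi$ between the two integer levels and recovering the seminorm $|v|_{s,E}$ through polynomial invariance plus a fractional Deny--Lions inequality, with uniformity of constants coming from $\bA_2$ — are exactly the steps the paper delegates to those references.
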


The first step is to establish the following result.

\begin{lemma}\label{cotaa}
Let $(w,\ga)\in\bfX$ be the unique solution
to the continuous Problem~\ref{P_2} 
and let $\bthe:=\nabla w+\ga$.
Let $(w_h,\ga_h)\in \bfX_h$ be the unique solution
to the discrete Problem~\ref{P_4}. Then,
for any $(w_{I},\ga_{I})\in \bfX_h$  
and $(\bthe_{\Pi},\ga_0)\in[\LO]^4$
such that $\bthe_{\Pi}|_{E}\in[\bbP_{1}(E)]^2$ and $\ga_{0}|_{E}\in[\bbP_{0}(E)]^2$
for all $E\in\CT_h$, there exists $C>0$ independent of $h$ and $t$ such that 
 \begin{align*}
 |||w-w_h,\ga-\ga_h|||&\leq C \left(t^{-1}\left(\|\ga-\ga_I\|_{0,\O}+\|\ga_{0}-\ga\|_{0,\O}\right)
 +\|\ga-\ga_I\|_{1,\O}+h\Vert g\Vert_{0,\O}\right.\\
&\hspace{4.5cm}\left.+\|\bthe-\bthe_{\Pi}\|_{1,h,\O}+\|\nabla w-\nabla w_I\|_{1,\O}\right).
 \end{align*}
\end{lemma}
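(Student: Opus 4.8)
The plan is to run the standard Virtual Element consistency--stability argument, but carried out entirely in the $t$-dependent energy norm $|||\cdot,\cdot|||$ so that every constant stays uniform in $t$. Throughout I write $\bthe=\nabla w+\ga$. First I would reduce to a discrete quantity by the triangle inequality: setting $(v_h,\bta_h):=(w_h-w_I,\ga_h-\ga_I)\in\bfX_h$ gives
$$|||w-w_h,\ga-\ga_h|||\le|||w-w_I,\ga-\ga_I|||+|||v_h,\bta_h|||.$$
The first summand is a pure interpolation error: splitting $\|(\ga-\ga_I)+\nabla(w-w_I)\|_{1,\O}\le\|\ga-\ga_I\|_{1,\O}+\|\nabla w-\nabla w_I\|_{1,\O}$, noting that $t^{-1}\|\ga-\ga_I\|_{0,\O}$ appears verbatim, and using a Poincar\'e inequality for $w-w_I$ (which vanishes on $\G_c\cup\G_s$, a set of positive measure) to absorb $\|w-w_I\|_{0,\O}$ into $\|\nabla w-\nabla w_I\|_{1,\O}$, this term is already dominated by the right-hand side of the claim. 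Everything then hinges on estimating $|||v_h,\bta_h|||$.

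For that I would invoke the discrete coercivity of Lemma~\ref{ha-elipt-disc}, so that $\beta|||v_h,\bta_h|||^2$ is bounded by $a_h(\nabla v_h+\bta_h,\nabla v_h+\bta_h)+b_h(\bta_h,\bta_h)$, and then use that $(w_h,\ga_h)$ solves Problem~\ref{P_4} to rewrite this as $\langle g_h,v_h\rangle_h-a_h(\nabla w_I+\ga_I,\nabla v_h+\bta_h)-b_h(\ga_I,\bta_h)$. The core of the argument is to massage the two discrete forms element by element: I insert $\bthe_\Pi$ and $\ga_0$, writing $a_h^E(\nabla w_I+\ga_I,\cdot)=a_h^E(\nabla w_I+\ga_I-\bthe_\Pi,\cdot)+a_h^E(\bthe_\Pi,\cdot)$ and similarly for $b_h^E$, and apply the consistency identities \eqref{consis0}--\eqref{consis1} to turn the polynomial pieces into the exact forms $a^E(\bthe_\Pi,\cdot)$ and $b^E(\ga_0,\cdot)$. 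Adding and subtracting $\bthe$ and $\ga$ and summing over $\CT_h$, the fully consistent part reassembles into $a(\bthe,\nabla v_h+\bta_h)+b(\ga,\bta_h)$, which by the continuous Problem~\ref{P_2} equals $(g,v_h)_{0,\O}$. This produces an error identity whose right-hand side consists of the loading discrepancy $\langle g_h,v_h\rangle_h-(g,v_h)_{0,\O}$ together with the residual sums $\sum_E a_h^E(\nabla w_I+\ga_I-\bthe_\Pi,\nabla v_h+\bta_h)$, $\sum_E a^E(\bthe_\Pi-\bthe,\nabla v_h+\bta_h)$, $\sum_E b_h^E(\ga_I-\ga_0,\bta_h)$ and $\sum_E b^E(\ga_0-\ga,\bta_h)$.

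The residual sums I would then bound term by term. For the two $a$-type sums I use the $H^1$-continuity of $a_h^E$ (a consequence of the upper bound in \eqref{stab0} together with Cauchy--Schwarz for the symmetric positive form) and of $a^E$, a discrete Cauchy--Schwarz over elements, and $\|\nabla v_h+\bta_h\|_{1,\O}\le|||v_h,\bta_h|||$; after inserting $\bthe$ into $\nabla w_I+\ga_I-\bthe_\Pi$ these produce exactly $\|\nabla w-\nabla w_I\|_{1,\O}$, $\|\ga-\ga_I\|_{1,\O}$ and $\|\bthe-\bthe_\Pi\|_{1,h,\O}$, each times $|||v_h,\bta_h|||$. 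For the two $b$-type sums I use the $t^{-2}L^2$-continuity coming from \eqref{stab1}; the point that secures $t$-uniformity is to split $t^{-2}=t^{-1}\cdot t^{-1}$, charging one factor to the error and the other to $t^{-1}\|\bta_h\|_{0,\O}\le|||v_h,\bta_h|||$. Combined with $\|\ga_I-\ga_0\|_{0,\O}\le\|\ga-\ga_I\|_{0,\O}+\|\ga_0-\ga\|_{0,\O}$ this yields the factor $t^{-1}(\|\ga-\ga_I\|_{0,\O}+\|\ga_0-\ga\|_{0,\O})$.

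The step I expect to be genuinely delicate is the loading discrepancy $\langle g_h,v_h\rangle_h-(g,v_h)_{0,\O}$, since there I must produce the clean $h\|g\|_{0,\O}$ bound from only the $L^2$-regularity of $g$ and without losing any power of $t$. Writing it elementwise and using $\bar g_E\int_E v_h$ as a pivot, it splits into $\int_E(\bar g_E-g)v_h$, controlled by the zero mean of $g-\bar g_E$ and a Poincar\'e estimate (giving $Ch\|g\|_{0,E}|v_h|_{1,E}$), and the vertex-quadrature error $\bar g_E\big(\sum_i v_h(\vv_i)\omega_E^i-\int_E v_h\big)$, which vanishes on $\bbP_1(E)$ by the defining exactness of the weights $\omega_E^i$ in \eqref{gh}; a Bramble--Hilbert/scaling estimate, the bound $|\bar g_E|\le|E|^{-1/2}\|g\|_{0,E}$ and the mesh regularity $\bA_1$--$\bA_2$ bring this under control as well, the total being $\le Ch\|g\|_{0,\O}|||v_h,\bta_h|||$. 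Dividing the whole error inequality by $|||v_h,\bta_h|||$ then bounds $|||v_h,\bta_h|||$ by the stated right-hand side, and together with the interpolation estimate of the first step this finishes the argument.
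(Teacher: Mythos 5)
Your overall route coincides with the paper's: triangle inequality, discrete coercivity from Lemma~\ref{ha-elipt-disc}, the discrete equation \eqref{forvar_3X}, insertion of $\bthe_{\Pi}$ and $\ga_0$ combined with the consistency identities \eqref{consis0}--\eqref{consis1}, reassembly of the exact forms into $(g,v_h)_{0,\O}$ via Problem~\ref{P_2}, and term-by-term bounds. Your treatment of the two $a$-type residuals and of the $b$-type residuals (including the $t^{-1}\cdot t^{-1}$ split against $t^{-1}\|\bta_h\|_{0,\O}\le|||v_h,\bta_h|||$) matches the paper's bounds for $T_2$ and $T_3$, and your handling of $\int_E(\bar{g}_{E}-g)v_h$ matches the paper's $T_1^b$. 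The gap is precisely in the step you flag as delicate: the vertex-quadrature error $\bar{g}_{E}\bigl(\sum_{i}v_h(\vv_i)\omega_E^{i}-\int_E v_h\bigr)$.

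A Bramble--Hilbert/scaling argument for this functional cannot close in the energy norm. The functional involves point values, so it is not bounded on $H^1(E)$ (in two dimensions $H^1\not\hookrightarrow C^0$), and applying Bramble--Hilbert over $H^2(E)$, where point values do make sense, produces a bound involving the local seminorm $|v_h|_{2,E}$. But $|v_h|_{2,E}$ is not controlled by $|||v_h,\bta_h|||$: the norm \eqref{norm} contains $\|\nabla v_h+\bta_h\|_{1,\O}$ and $t^{-1}\|\bta_h\|_{0,\O}$ but not $|\bta_h|_{1,\O}$, so the splitting $|v_h|_{2,E}\le\|\nabla v_h+\bta_h\|_{1,E}+|\bta_h|_{1,E}$ leaves an uncontrolled piece; rescuing the argument would require an inverse estimate for virtual element functions (e.g. $|v_h|_{2,E}\le Ch_E^{-1}|v_h|_{1,E}$ on $\bWE$), a nontrivial tool that neither you invoke nor the paper provides. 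The paper's proof avoids $H^2$ altogether by exploiting a structural fact your plan never uses: the quadrature nodes lie on $\partial E$, where $\delta_w:=v_h$ is a continuous piecewise polynomial. Choosing the pivot $p_0:=\Pi_{\partial E}^0(\delta_w)$ (the boundary average), the paper applies the inverse-type inequality of \cite[Lemma~3.1]{bertoluzza} for piecewise polynomials on the one-dimensional boundary (valid thanks to assumption $\bA_1$) together with a scaled trace inequality to obtain
\begin{equation*}
\|\delta_{w}-p_{0}\|_{L^\infty(\partial E)}\le C\,|\delta_{w}|_{1/2,\partial E}+h_E^{-1/2}\|\delta_{w}-p_{0}\|_{0,\partial E}\le C\,|\delta_{w}|_{1,E},
\end{equation*}
i.e. a bound by the $H^1$-seminorm only, which the energy norm does control; this is what yields $T_1^a\le Ch\|g\|_{0,\O}|\delta_w|_{1,\O}$ uniformly in $t$. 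Without this boundary-polynomial ingredient, your estimate of the quadrature term does not go through.
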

\begin{proof}
We set $\boldsymbol{\delta}_{\ga}:=\ga_h-\ga_I$, $\delta_{w}:=w_h-w_I$,
$\bthe_h:=\nabla w_h+\ga_h$, $\bthe_I:=\nabla w_I+\ga_I$
and $\boldsymbol{\delta}_{\bthe}:=\bthe_h-\bthe_I$.
Thanks to Lemma~\ref{ha-elipt-disc} and equations \eqref{forvar_3X},
\eqref{consis0}, \eqref{consis1} we have that
\begin{align*}
\beta|||(w_h-w_I),(\ga_h-\ga_I)|||^2&\leq 
a_h(\bthe_h-\bthe_I,\boldsymbol{\delta}_{\bthe})+b_h(\ga_h-\ga_I,\boldsymbol{\delta}_{\ga})\\
\nonumber
&= a_h(\nabla w_h+\ga_h,\boldsymbol{\delta}_{\bthe})+b_h(\ga_h,\boldsymbol{\delta}_{\ga})-\left( 
a_h(\bthe_I,\boldsymbol{\delta}_{\bthe})+b_h(\ga_I,\boldsymbol{\delta}_{\ga})\right)\\
\nonumber
&=\left<g_h,\delta_{w}\right>_{h}
-\sum_{E\in\CT_h}\left(a_h^E(\bthe_I-\bthe_{\Pi},\boldsymbol{\delta}_{\bthe})
+a^E(\bthe_{\Pi}-\bthe,\boldsymbol{\delta}_{\bthe})
+a^E(\bthe,\boldsymbol{\delta}_{\boldsymbol{\bthe}})\right)\\
&\hspace{0.5cm}\qquad-\sum_{E\in \CT_h}\left(b_h^E(\ga_I-\ga_{0},\boldsymbol{\delta}_{\ga})+b^E(\ga_{0}-\ga,\boldsymbol{\delta}_{\ga} )+b^E(\ga,\boldsymbol{\delta}_{ \ga } )\right)\\
&\leq T_1+T_2+T_3,
\end{align*}
where 
\begin{align*}
T_1&:=\left|\left<g_h,\delta_{w}\right>_h-\left(g,\delta_{w}\right)_{0,\O}\right|,\qquad
T_2:=\left|\sum_{E\in \CT_h}\left(a_h^E(\bthe_I-\bthe_{\Pi},\boldsymbol{\delta}_{\bthe})
-a^E(\bthe_{\Pi}-\bthe,\boldsymbol{\delta}_{\bthe})\right)\right|,\\
T_3:&=\left|\sum_{E\in \CT_h}\left(b_h^E(\ga_I-\ga_{0},\boldsymbol{\delta}_{\ga})
-b^E(\ga_{0}-\ga,\boldsymbol{\delta}_{\ga} )\right)\right|.
\end{align*}
We now bound each term $T_i$, $i=1,2,3$, with a constant $C$ independent of $h$ and $t$.

First, we bound the term $T_{2}$. Using \eqref{stab0}, the fact
that bilinear form $a(\cdot,\cdot)$ is bounded
and finally adding and subtracting $\bthe$, we obtain
\begin{align*}
T_2 &\leq\sum_{E\in\CT_h}\left|a_h^E(\bthe_I-\bthe_{\Pi},\boldsymbol{\delta}_{\bthe})\right|
+\sum_{E\in\CT_h}\left|a^E(\bthe_{\Pi}-\bthe,\boldsymbol{\delta}_{\bthe})\right|\\
&\leq\sum_{E\in\CT_h}C(\|\bthe_I-\bthe_{\Pi}\|_{1,E}
+\|\bthe_{\Pi}-\bthe\|_{1,E})\|\boldsymbol{\delta}_{\bthe}\|_{1,E}\\
&\leq\sum_{E\in\CT_h}C(\|\bthe_I-\bthe\|_{1,E}
+\|\bthe_{\Pi}-\bthe\|_{1,E})\|\boldsymbol{\delta}_{\bthe}\|_{1,E}.
\end{align*}

For the term $T_{3}$, using \eqref{stab1}, the definition
of bilinear form $b(\cdot,\cdot)$, the Cauchy--Schwarz inequality,
and finally adding and subtracting $\ga$, we obtain
\begin{align*}
T_3&\leq\sum_{E\in \CT_h}C(\|\ga_I-\ga\|_{0,E}+\|\ga_{0}-\ga\|_{0,E})t^{-2}\| \boldsymbol{\delta}_{\ga} \|_{0,E}.
\end{align*}

Now, we bound $T_1$. Using the definition \eqref{gh},
and adding and subtracting $\bar{g}_{E}$ we rewrite the term as follows
\begin{align*}
T_1&=\left| \sum_{E\in\CT_h}\left(\bar{g}_{E}\sum_{i=1}^{N_E}\delta_{w}(\vv_i)\omega_E^{i}\right)
-\sum_{E\in\CT_h}\int_{E}g\delta_{w}\right|\\
&= \left| \sum_{E\in\CT_h}\left(\bar{g}_{E}\sum_{i=1}^{N_E}\delta_{w}(\vv_i)\omega_E^{i}
-\int_{E}\bar{g}_{E}\delta_{w}\right)
+\sum_{E\in\CT_h}\left(\int_{E}(\bar{g}_{E}-g)(\delta_{w}-p)\right)\right|,
\end{align*}
for any $p\in \bbP_{0}(E)$, where we have used the definition of $\bar{g}_{E}$.
Therefore,
\begin{align*}
T_1&\leq \left| \sum_{E\in\CT_h}\left(\bar{g}_{E}\sum_{i=1}^{N_E}\delta_{w}(\vv_i)\omega_E^{i}
-\int_{E}\bar{g}_{E}\delta_{w}\right)\right|
+\sum_{E\in\CT_h}\|g-\bar{g}_{E}\|_{0,E}\|\delta_{w}-p\|_{0,E}:=T_{1}^{a}+T_{1}^{b}.
\end{align*}
First, $T_{1}^{b}$ is easily bounded. In fact, taking $p$ as in
Proposition~\ref{estima1}, we obtain that
\begin{equation*}\label{part1}
T_{1}^{b}\le Ch\Vert g\Vert_{0,\O}\|\delta_{w}\|_{1,\O}.
\end{equation*}
In what follows
we will manipulate the terms $T_{1}^{a}$:
adding and subtracting $p_{0}\in \bbP_{0}(E)$,
and since the integration rule in \eqref{gh} is exact for constant functions, we have
\begin{align}
T_{1}^{a}\le&\left|\sum_{E\in\CT_h}\int_{E}\bar{g}_{E}(\delta_{w}-p_{0})\right|
+\left|\sum_{E\in\CT_h}\left(\bar{g}_{E}\left(\sum_{i=1}^{N_E}
(\delta_{w}-p_{0})(\vv_i)\omega_E^{i}\right)\right)\right|\nonumber\\
&\leq \|g\|_{0,\O}\left(\sum_{E\in\CT_h}\|\delta_{w}-p_{0}\|_{0,E}^{2}\right)^{1/2}
+\sum_{E\in\CT_h}|E|\bar{g}_{E}\|\delta_{w}-p_{0}\|_{L^\infty(\partial E)}\nonumber\\
&\leq \|g\|_{0,\O}\left(\sum_{E\in\CT_h}\|\delta_{w}-p_{0}\|_{0,E}^{2}\right)^{1/2}
+\|g\|_{0,\O}\left(\sum_{E\in\CT_h}h_{E}^{2}\|\delta_{w}-p_{0}\|_{L^\infty(\partial E)}^{2}\right)^{1/2}.\label{eqfinal}
\end{align}
%
Now, we fix $p_{0}:=\Pi_{\partial E}^0(\delta_{w})=\frac{1}{\vert\partial E\vert}\int_{\partial E}\delta_{w}$.
Thus, we have that $\delta_{w}-p_{0}$ is a (continuous) piecewise polynomial on $\partial E$,
and that the length of the edges of $E$ is bounded from below in the sense of assumption ${\bf A}_1$.
Therefore, we can apply Lemma 3.1 in \cite{bertoluzza}, standard polynomial approximation estimates
and a trace inequality to derive the following estimate for the second term on the right hand side in \eqref{eqfinal}:
$$
\|\delta_{w}-p_{0}\|_{L^\infty(\partial E)} \le C | \delta_{w} |_{1/2,\partial E} + h_E^{-1/2} \| \delta_{w}-p_{0} \|_{0,\partial E} 
\le C  | \delta_{w} |_{1/2,\partial E} \le C  | \delta_{w} |_{1,E}.
$$
For the first term on the right hand side
in \eqref{eqfinal}, we consider $c\in\bbP_{0}(E)$
such that Proposition~\ref{estima1} holds with respect
to $\delta_{w}$ (for instance, take $c$ as the average
of $\delta_{w}$ on $E$). Thus, simple calculations yield
\begin{equation*}
\begin{split}
\|\delta_{w}-p_{0}\|_{0,E}&\le \|\delta_{w}-c\|_{0,E}+\Vert\Pi_{\partial E}^0(\delta_{w}-c)\Vert_{0,E}\\
&\le Ch_E\vert\delta_{w}\vert_{1,E}+h_E^{1/2}\Vert\Pi_{\partial E}^0(\delta_{w}-c)\Vert_{0,\partial E}\\
&\le Ch_E\vert\delta_{w}\vert_{1,E}+h_E^{1/2}\Vert\delta_{w}-c\Vert_{0,\partial E}\\
&\le Ch_E\vert\delta_{w}\vert_{1,E}+\Vert\delta_{w}-c\Vert_{0,E}+h_E\vert\delta_{w}\vert_{1,E},\\
&\le Ch_E\vert\delta_{w}\vert_{1,E},
\end{split}
\end{equation*}
where we have used a scaled trace estimate on polygons
(also sometimes called Agmon inequality in the FEM literature),
see for instance \cite[Lemma~14]{BMRR}).
Hence, from the above estimates, we obtain,
\begin{align*}
T_{1}^{a}\le& Ch\|g\|_{0,\O} | \delta_w |_{1,\O}.
\end{align*}
Thus, since $|\delta_{w}|_{1,\O}\leq |||\delta_{w},\boldsymbol{\delta}_{\ga} |||$, we have that
\begin{equation}
\label{cotag}
T_1\leq T_1^{a}+T_1^{b}\leq Ch\Vert g\Vert_{0,\O}|||\delta_{w},\boldsymbol{\delta}_{\ga} |||.
\end{equation}

Therefore, by combining \eqref{cotag} with the above bounds for $T_2$ and $T_3$, we get
\begin{align*}
|||(w_h-w_I),(\ga_h-\ga_I)|||\leq &C\big(t^{-1}(\|\ga-\ga_I\|_{0,\O}
+\|\ga_{0}-\ga\|_{0,\O})\\
&+\|\bthe-\bthe_I\|_{1,\O}+\|\bthe_{\Pi}-\bthe\|_{1,h,\O}+h\Vert g\Vert_{0,\O}\big).
\end{align*}
Hence, the proof follows from the bound above,
the triangular inequality,
the definition of $|||\cdot|||$ (see \eqref{norm}),
the definition of $\bthe_I$ and the inequality
$\|\bthe-\bthe_{I}\|_{1,\O}\leq \|\nabla w-\nabla w_I\|_{1,\O}+\|\ga-\ga_I\|_{1,\O}$.
In fact,
\begin{align*}
|||w-w_h,\ga-\ga_h|||\leq& |||w-w_I,\ga-\ga_I|||+|||w_I-w_h,\ga_I-\ga_h|||\\
\leq&C ( t^{-1}\|\ga-\ga_I\|_{0,\O}+t^{-1}\|\ga_{0}-\ga\|_{0,\O}
+\|\ga-\ga_I\|_{1,\O}\\
&+h\Vert g\Vert_{0,\O}+\|\bthe-\bthe_{\Pi}\|_{1,h,\O}
+\|\nabla w-\nabla w_I\|_{1,\O}).
\end{align*}
The proof is complete.
\end{proof}

The next step is to find appropriate terms $(w_{I},\ga_{I})$,
$(w_{\Pi},\ga_{\Pi})$ and $\ga_{0}$ that can be used in
Lemma~\ref{cotaa} to prove the claimed convergence.
As a preliminary construction, we introduce, for every vertex $\vv$
of the mesh laying on $\partial\O$, the following function. 
Let $e_\vv$ be any one of the two edges on $\partial\O$ sharing $\vv$,
fixed once and for all; the only rule being that, if one of the two edges
is in $\G_c$ and the other is not, then the one in $\G_c$ must be chosen.
Then, we denote by ${\boldsymbol\varphi}_{\vv}$ the unique (vector valued)
polynomial of degree $2$ living on $e_\vv$ such that
\begin{equation}\label{phifun}
\int_{e_\vv} {\bf p} \cdot {\boldsymbol\varphi}_{\vv} = {\bf p}(\vv) \qquad \forall {\bf p} \in [\bbP_2(e_\vv)]^{2}.
\end{equation}

Then, for the term $w_{I}\in W_{h}$, we have the following result.
\begin{proposition}
\label{estima3}
There exists a positive constant $C$, such that
for every $v\in H^{3}(\O)$ there exists $v_I\in W_{h}$
that satisfies
\begin{eqnarray*}
\vert v-v_{I}\vert_{l,\O}\leq C {h}^{3-l}|v|_{3,\O},\quad l=0,1,2.
\end{eqnarray*}
\end{proposition}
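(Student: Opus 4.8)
I need to construct an interpolant $v_I \in W_h$ approximating a given $v \in H^3(\Omega)$ with the stated optimal rates. The space $W_h$ consists of globally $H^2(\Omega)$ functions whose local degrees of freedom are the values of $v_h$ and $\nabla v_h$ at each vertex. So the interpolant is determined by matching those pointwise values. The core difficulty is twofold: (1) vertex point-values of $v$ and $\nabla v$ must make sense, which requires a Sobolev embedding since we work in $\mathbb{R}^2$; and (2) I must produce a LOCAL estimate on each element and sum up, while ensuring the pieces glue into a genuinely conforming $H^2$ function — which here is automatic because matching vertex values of $v$ and $\nabla v$ forces the boundary traces and normal/tangential derivatives to agree across shared edges.
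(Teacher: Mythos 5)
There is a genuine gap: your proposal stops at identifying the interpolant and never proves the estimates. Defining $v_I\in W_h$ by matching the vertex values of $v$ and $\nabla v$ is indeed (essentially) the right object, and your two ``core difficulties'' are real but are the \emph{easy} parts --- point values make sense because $H^3(\O)\subset C^1(\overline\O)$ in two dimensions, and $H^2$-conformity is automatic from the structure of $\bWE$ (edge traces in $\bbP_3$ and normal derivatives in $\bbP_1$ are fixed by the endpoint data). The actual difficulty, which you do not address, is how to estimate $\vert v-v_I\vert_{l,E}$ at all: the functions of $\bWE$ are only \emph{implicitly} defined as solutions of $\Delta^2 v_I=0$ in $E$, and the elements are general polygons, so the classical route (map to a reference element, use finite dimensionality and Bramble--Hilbert) is unavailable --- there is no reference polygon, and $v_I$ is not a polynomial inside $E$.

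The paper's proof supplies precisely the machinery your outline lacks. Using assumption $\bA_2$, each polygon is sub-triangulated by joining its vertices to the center of the inscribed ball, producing a shape-regular triangulation $\hCT_h$; on it one takes the (slightly modified) reduced Hsieh--Clough--Tocher interpolant $v_{\SZ}$, a classical $C^1$ finite element with known optimal estimates \eqref{err_SZ}. The virtual interpolant $v_I$ is then defined elementwise as the biharmonic lifting of the boundary data of $v_{\SZ}$, and the key step is the energy-minimization property of biharmonic functions: with $v_\Pi\in\bbP_2(E)$ from Proposition~\ref{estima1}, the function $v_\Pi-v_I$ is biharmonic, hence
\begin{equation*}
\left|v_{\Pi}-v_I\right|_{2,E}\le\left|v_{\Pi}-v_{\SZ}\right|_{2,E},
\end{equation*}
which transfers the HCT estimate to the virtual function; the $l=1,0$ cases then follow from Poincar\'e-type inequalities since $v_I-v_{\SZ}$ has vanishing trace and normal derivative on $\partial E$. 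Without this comparison argument (or an equivalent substitute proving $H^2$-stability of the virtual interpolation on general star-shaped polygons), your proposal cannot be completed as written.
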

\begin{proof}
 Given $v\in H^{3}(\O)$,
we consider $v_{\Pi}\in\LO$ defined on each $E\in\CT_h$ so that
 $v_{\Pi}|_{E}\in\bbP_2(E)$ and the estimate of Proposition~\ref{estima1}
 holds true.
 
 For each polygon $E\in\CT_h$, consider the triangulation $\CT_h^{E}$
 obtained by joining each vertex of $E$ with the center of the ball
in assumption $\bA_2$. Let
 $\hCT_h:=\bigcup_{E\in\CT_h}\CT_h^{E}$. Since we are assuming
 $\bA_1$ and $\bA_2$, $\big\{\hCT_h\big\}_h$ is a shape-regular family of
 triangulations of $\O$.
 
Let $v_{\SZ}$ be the reduced Hsieh-Clough-Tocher triangle
(see \cite{ciarlet,ciarlet2}) interpolant of $v$ over $\hCT_h$, slightly modified as follows. 
For the nodes on the boundary, the value of $\nabla v_{\SZ}$ is given by 
$$
\nabla v_{\SZ}(\vv) := \int_{e_\vv} {\nabla v} \cdot {\boldsymbol\varphi}_{\vv} ,
$$
see \eqref{phifun}, while the values of the remaining degrees of freedom is the same as in the original version.
This is a modification, in the spirit of the Scott-Zhang interpolation \cite{scott-zhang}, of the standard nodal value; 
the motivation for such modification is not related directly to the present result (that would hold also with the original HCT interpolant) and will be clearer in the sequel. 
This modified version still satisfies similar approximation properties with respect the original version \cite{ciarlet,ciarlet2}; we omit the standard proof and simply state the result: 
 \begin{equation}
 \label{err_SZ}
\vert v-v_{\SZ}\vert_{l,\O}
 \le C{h}^{3-l}\left|v\right|_{3,\O}\quad l=0,1,2.
 \end{equation}
Now, for each $E\in\CT_h$, we define $v_I|_{E}\in\Hu2E$ as the solution
of the following problem:
$$
\left\{\begin{array}{l}
-\Delta^2 v_I=0
\quad\textrm{in }E,
\\[0.1cm]
\hphantom{-\Delta^2}
v_I=v_{\SZ}
\quad\textrm{on }\partial E,
\\[0.1cm]
\hphantom{-\Delta^2}
\partial_{\boldsymbol{n}} v_I=\partial_{\boldsymbol{n}} v_{\SZ}
\quad\textrm{on }\partial E.
\end{array}\right.
$$
Note that $v_I|_{E}\in W_{h}^E$. Moreover, although $v_I$ is defined
locally, since on the boundary of each element it coincides with
$v_{\SZ}$ which belongs to $\HdO$, we have that also $v_I$ belongs to
$\HdO$ and, hence, $v_I\in W_h$.

According to the above definition we have that
$$
\left\{\begin{array}{l}
-\Delta^2(v_{\Pi}-v_I)=0
\quad\textrm{in } E,
\\[0.1cm]
\hphantom{-\Delta^2}
v_{\Pi}-v_I
=v_{\Pi}-v_{\SZ}
\quad\textrm{on }\partial E,
\\[0.1cm]
\hphantom{-\Delta}
\partial_{\boldsymbol{n}}(v_{\Pi}-v_I)
=\partial_{\boldsymbol{n}}(v_{\Pi}-v_{\SZ})
\quad\textrm{on }\partial E,
\end{array}\right.
$$
and, hence, it is easy to check that
\begin{align*}
\left|v_{\Pi}-v_I\right|_{2,E}&=\inf\left\{\left|z\right|_{2,E},\ z\in \Hu2E:
\ z=v_{\Pi}-v_{\SZ}\ \mbox{ on }\partial E \text{ and }\
\partial_{\boldsymbol{n}}z=\partial_{\boldsymbol{n}}(v_{\Pi}-v_{\SZ})\ \mbox{ on }\partial E\right\}
\\
&\le\left|v_{\Pi}-v_{\SZ}\right|_{2, E}.
\end{align*}
Therefore,
\begin{align*}
\left|v-v_I\right|_{2,E}
&\le\left|v-v_{\Pi}\right|_{2,E}
+\left|v_{\Pi}-v_I\right|_{2,E}\\
&\le\left|v-v_{\Pi}\right|_{2,E}
+\left|v_{\Pi}-v_{\SZ}\right|_{2,E}\\
&\le 2\left|v-v_{\Pi}\right|_{2,E}
+\left|v-v_{\SZ}\right|_{2,E}\\
&\le Ch_E\left|v\right|_{3,E}
+\left|v-v_{\SZ}\right|_{2,E},
\end{align*}
where we have used Proposition~\ref{estima1}.
By summing on all the elements and recalling \eqref{err_SZ}
(plus standard approximation estimates for
polynomials on polygons) we obtain
$$
\left|v-v_I\right|_{2,\O} \le C \big(h\left|v\right|_{3,\Omega}
+\left|v-v_{\SZ}\right|_{2,\Omega} \big) \le C h |v|_{3,\Omega}.
$$
Moreover, from the above bound and (recalling that
$\partial_{\boldsymbol{n}}(v_{I}-v_{\SZ})=0$ and $(v_{I}-v_{\SZ})=0$ on $\partial E$)
a Poincar\'e-type inequality, we have
\begin{align*}
\left|v-v_I\right|_{1,E}
& \le\left|v-v_{\SZ}\right|_{1,E}
+\left|v_{\SZ}-v_I\right|_{1,E}
\le\left|v-v_{\SZ}\right|_{1,E}
+Ch_{E}\left|v_{\SZ}-v_I\right|_{2,E}
\\
& \le\left|v-v_{\SZ}\right|_{1,E}
+Ch_{E}\left|v-v_{\SZ}\right|_{2,E}
+Ch_{E}\left|v-v_I\right|_{2,E},
\end{align*}
so that, summing on all the elements and using the bounds above,
$$
\left|v-v_I\right|_{1,\Omega} \le C h^{2}\left|v\right|_{3,\Omega}.
$$
By an analogous argument one obtains
\begin{align*}
\|v-v_I\|_{0,\Omega}\le C \big( \|v-v_{\SZ}\|_{0,\Omega}
+ h \left|v_{\SZ}-v_I\right|_{1,\Omega} \big)
 \le C h^{3}\left|v\right|_{3,\Omega},
\end{align*}
which allows us to complete the proof.
\end{proof}

Finally, we present the following result for
the approximation properties of the space $\bfV_h$.
\begin{proposition}
\label{estima4}
There exists $C>0$ such that for every $\bta\in [\HsO]^2$ with $s\in[1,2]$ there exists  
$\bta_I\in \bfV_h$ that satisfies
\begin{eqnarray*}
\Vert \bta-\bta_{I}\Vert_{0,\O}+h\vert\bta-\bta_{I}\vert_{1,\O}
\leq C h^{s}|\bta|_{s,\O}.
\end{eqnarray*}
\end{proposition}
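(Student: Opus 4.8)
The plan is to build the interpolant $\bta_I \in \bfV_h$ by specifying its degrees of freedom $\mathcal{V}^h$ and $\CE^h$ directly from $\bta$, and then to estimate the error element by element using the characterization of $\bVE$ as the minimizer of the $H^1$-seminorm subject to its boundary data. First I would recall that, by the mesh regularity assumptions $\bA_1$ and $\bA_2$, each polygon $E$ admits the sub-triangulation $\hCT_h^E$ used in the proof of Proposition~\ref{estima3}, giving a shape-regular auxiliary triangulation on which standard Clément- or Scott--Zhang-type interpolation estimates hold. Since $\bta \in [\HsO]^2$ with $s \in [1,2]$ need not have pointwise values, I would define the vertex values in $\mathcal{V}^h$ via local averages in the Scott--Zhang spirit (this is precisely the motivation hinted at by the introduction of ${\boldsymbol\varphi}_{\vv}$ in \eqref{phifun} and the modified nodal values in Proposition~\ref{estima3}), and define the edge moments in $\CE^h$ as $\frac{1}{|e|}\int_e \bta\cdot\bt$, which are well defined by trace theory.

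The key steps, in order, are as follows. First I would introduce a piecewise polynomial approximation $\bta_{\Pi}$ with $\bta_{\Pi}|_E \in [\bbP_1(E)]^2 \subset \bVE$ satisfying the estimate of Proposition~\ref{estima1}, namely $\|\bta-\bta_{\Pi}\|_{0,E} + h_E|\bta-\bta_{\Pi}|_{1,E} \le C h_E^s |\bta|_{s,E}$. Second, because both $\bta_I|_E$ and $\bta_{\Pi}|_E$ lie in $H^1(E)$ with $\rot$ constant and the former \emph{minimizes} the $H^1(E)$-seminorm among functions with its prescribed boundary trace (as noted in the construction of $\bVE$), I would bound $|\bta_I - \bta_{\Pi}|_{1,E}$ by the seminorm of any competitor sharing the same boundary data as $\bta_I$. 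A convenient competitor is built from $\bta_{\Pi}$ corrected on the boundary by a lifting of the (small) difference in boundary traces $(\bta_I - \bta_{\Pi})|_{\partial E}$. Third, I would estimate that boundary trace difference: it is a piecewise polynomial on $\partial E$ (degree $2$ tangentially, degree $1$ normally) whose degrees of freedom equal those of $\bta - \bta_{\Pi}$ by construction, so standard trace and inverse estimates on $\partial E$, combined with the scaled trace inequality on polygons used in Lemma~\ref{cotaa}, yield $\|\bta_I - \bta_{\Pi}\|_{0,\partial E}$ and $|\bta_I - \bta_{\Pi}|_{1/2,\partial E}$ in terms of $\|\bta - \bta_{\Pi}\|_{\cdot,E}$.

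Putting these together through the triangle inequality $|\bta - \bta_I|_{1,E} \le |\bta - \bta_{\Pi}|_{1,E} + |\bta_{\Pi} - \bta_I|_{1,E}$, and similarly for the $L^2$-norm (using a Poincaré-type inequality on $E$ since $\bta_I - \bta_{\Pi}$ is controlled through its boundary data and constant $\rot$), and finally summing over all $E \in \CT_h$, I expect to recover $\Vert \bta-\bta_I\Vert_{0,\O} + h|\bta-\bta_I|_{1,\O} \le C h^s |\bta|_{s,\O}$. The main obstacle is the second step: unlike a standard finite element interpolant, $\bta_I$ is only implicitly defined through a Stokes-like problem and is not a local projection of $\bta$, so the error cannot be estimated by a direct Bramble--Hilbert argument on a reference element. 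The essential device is to exploit the energy-minimization characterization of $\bVE$ to reduce the interior $H^1$-estimate to a boundary estimate, and then to control the boundary terms via the fact that the degrees of freedom of $\bta_I$ and $\bta_{\Pi}$ agree, so that $(\bta_I-\bta_{\Pi})|_{\partial E}$ is a polynomial with vanishing degrees of freedom and hence controlled by scaled trace estimates. Handling the boundary conditions defining $\bfV_h$ correctly on $\G_c$ (so that $\bta_I$ genuinely lands in the right space) requires the choice of $e_\vv$ prescribed before \eqref{phifun}, which is exactly why that preliminary construction was introduced.
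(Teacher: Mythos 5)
Your construction of the interpolant coincides exactly with the paper's: Cl\'ement-type averages at interior vertices, the ${\boldsymbol\varphi}_{\vv}$-averages of \eqref{phifun} at boundary vertices, and edge tangential moments taken directly from $\bta$. The paper then omits the estimate itself, referring to \cite[Proposition~4.1]{BLV}, and the argument you sketch --- comparison with a local $[\bbP_1(E)]^2$ approximant, reduction of the interior $H^1$-bound to boundary data via the energy/minimization characterization of $\bVE$ (equivalently, stability of the local Stokes-type problem, which also takes care of the constant-$\rot$ constraint your lifting competitor must respect), and scaled trace estimates --- is precisely that cited argument, so your proposal follows essentially the same route as the paper.
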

\begin{proof}
We refer the reader to Section~\ref{sec:added:X} for the definition of the degrees
of freedom of $\bfV_h$ and define $\bta_{I}$ as follows. All degrees of freedom
associated to internal vertices are calculated as an integral average of
$\bta$ on the elements sharing the vertex (as in standard Cl\'ement interpolation).
All the vertex boundary values are taken as (see \eqref{phifun})
$$
\bta_I(\vv) = \int_{e_\vv} {\bta} \cdot {\boldsymbol\varphi}_{\vv}.
$$
Finally, the edge degrees of freedom are computed directly by 
$$
\disp \dfrac{1}{|e|}\int_{e}\bta_I\cdot\boldsymbol{t}=
\disp \dfrac{1}{|e|}\int_{e}\bta\cdot\boldsymbol{t}
\quad \forall \text{ edge }e\in\CT_h.
$$
The rest of the proof is omitted since it follows repeating essentially the same argument used
to establish \cite[Proposition~4.1]{BLV}.
\end{proof}

According to the above results, we are able to establish
the convergence of the Virtual Element scheme presented in Problem~\ref{P_4}.
\begin{theorem}
\label{aproximation2}
Let $(w,\ga)\in\bfX$ and $(w_h,\ga_h)\in \bfX_h$ be the unique solutions
of the continuous and discrete problems, respectively.
Assume that $(w,\ga)\in(H^{3}(\O),[\HdO]^{2})$.
Then, there exists $C>0$
independent of $h$, $g$ and $t$ such that
\begin{equation*}
|||w-w_h,\ga-\ga_h|||\leq C h\left(t^{-1}|\ga|_{1,\O}+|\bthe|_{2,\O}+|w|_{3,\O}+\Vert g\Vert_{0,\O}\right),
\end{equation*}
where  $\bthe:=\nabla w+\ga$.
\end{theorem}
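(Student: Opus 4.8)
The plan is to invoke Lemma~\ref{cotaa} and then to bound each of the six terms on its right-hand side by means of the approximation results already at our disposal. I would make the following choices for the free interpolants and projections appearing in that lemma: take $w_I\in W_h$ to be the interpolant furnished by Proposition~\ref{estima3} (admissible since $w\in H^3(\O)$); take $\ga_I\in\bfV_h$ to be the interpolant of Proposition~\ref{estima4} (admissible since $\ga\in[\HdO]^2$, so the hypothesis holds with $s=2$); take $\bthe_\Pi$ to be the element-wise best $[\bbP_1(E)]^2$-approximation of $\bthe=\nabla w+\ga$ provided by Proposition~\ref{estima1} with $k=1$; and take $\ga_0$ to be the piecewise constant $[\bbP_0(E)]^2$-projection of $\ga$, again covered by Proposition~\ref{estima1} with $k=0$.

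The step that deserves genuine care — and on which the $t$-uniformity of the final constant hinges — is the treatment of the two terms carrying the factor $t^{-1}$, namely $t^{-1}\|\ga-\ga_I\|_{0,\O}$ and $t^{-1}\|\ga_0-\ga\|_{0,\O}$. For these I would deliberately invoke the \emph{lowest-order} estimates: Proposition~\ref{estima4} with $s=1$ for the first and Proposition~\ref{estima1} with $s=1$, $l=0$ for the second, each yielding a bound of the form $Ch\,|\ga|_{1,\O}$. This keeps the potentially large factor $t^{-1}$ multiplied only by the first-order seminorm $|\ga|_{1,\O}$ (which is controlled uniformly in $t$), rather than by $|\ga|_{2,\O}$; it is precisely this judicious lowering of the Sobolev index that produces the $t^{-1}|\ga|_{1,\O}$ contribution of the claimed bound and prevents the thickness factor from ever multiplying a second-order norm of $\ga$.

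For the remaining terms I would use the full-order estimates applied to the same interpolants. From Proposition~\ref{estima4} with $s=2$ one obtains $\|\ga-\ga_I\|_{1,\O}\le Ch\,|\ga|_{2,\O}$; summing Proposition~\ref{estima1} (with $k=1$) over the elements in the broken norm gives $\|\bthe-\bthe_\Pi\|_{1,h,\O}\le Ch\,|\bthe|_{2,\O}$; and combining the $l=1$ and $l=2$ bounds of Proposition~\ref{estima3} yields $\|\nabla w-\nabla w_I\|_{1,\O}\le Ch\,|w|_{3,\O}$. To eliminate the orphan term $|\ga|_{2,\O}$ and recover exactly the right-hand side in the statement, I would finally use the identity $\ga=\bthe-\nabla w$ to write $|\ga|_{2,\O}\le|\bthe|_{2,\O}+|w|_{3,\O}$.

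Collecting the six contributions together with the term $h\|g\|_{0,\O}$ already present in Lemma~\ref{cotaa} then gives $|||w-w_h,\ga-\ga_h|||\le Ch\big(t^{-1}|\ga|_{1,\O}+|\bthe|_{2,\O}+|w|_{3,\O}+\|g\|_{0,\O}\big)$, as asserted. In short, the argument is an assembly of already-proven ingredients; I do not expect a serious obstacle, the only true subtlety being the choice of the lower regularity index $s=1$ for the two $t^{-1}$-scaled terms, which is what makes the estimate robust as $t\to 0$.
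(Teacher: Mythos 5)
Your proposal is correct and follows essentially the same route as the paper: invoke Lemma~\ref{cotaa} with $w_I$, $\ga_I$, $\bthe_\Pi$, $\ga_0$ chosen via Propositions~\ref{estima3}, \ref{estima4} and \ref{estima1}, use the lower-order ($s=1$) estimates on the two $t^{-1}$-weighted $L^2$ terms so that only $|\ga|_{1,\O}$ is multiplied by $t^{-1}$, and finish with $|\ga|_{2,\O}\le|\bthe|_{2,\O}+|w|_{3,\O}$ from $\ga=\bthe-\nabla w$. The paper's proof is terser but makes exactly these choices implicitly, and your explicit remark about the regularity-index selection is precisely the point on which the $t$-uniformity of the constant rests.
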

\begin{proof}
The proof follows from Lemma~\ref{cotaa} and
Propositions~\ref{estima1}, \ref{estima3}
and \ref{estima4}. In fact, 
 \begin{align*}
|||w-w_h,\ga-\ga_h|||\leq & C\Big(t^{-1}(\|\ga-\ga_I\|_{0,\O}+\|\ga_{0}-\ga\|_{0,h,\O})+\|\ga-\ga_I\|_{1,\O}\\
&+h\Vert g\Vert_{0,\O}+\|\bthe-\bthe_{\Pi}\|_{1,h,\O}+\|\nabla w-\nabla w_I\|_{1,\O}\Big)\\
\leq& C h\left(t^{-1}|\ga|_{1,\O}+|\bthe|_{2,\O}+|w|_{3,\O}+\Vert g\Vert_{0,\O}\right),
\end{align*}
where we have used that $\ga=\bthe-\nabla w$
so that $|\ga|_{2,\O}\leq |w|_{3,\O}+|\bthe|_{2,\O}$. Thus, we conclude the proof. 
\end{proof}

\begin{remark}
It is easy to check that the couple $(w_{I},\ga_{I})$ used in Theorem~\ref{aproximation2}
(accordingly to the interpolants definition given in Propositions~\ref{estima3} and \ref{estima4})
does actually satisfy the boundary conditions and is thus in $\bfX_h$. 
Indeed, the condition $w_I=0$ on $\G_c\cup\G_s$ follows immediately
from the analogous one for $w$. The condition $\nabla w_I + \ga_I=\0$ on $\G_c$
can be easily derived from the analogous one for $(w,\ga)$ combined with our choice
for the boundary node interpolation and the definition of the discrete spaces.
\end{remark}

\begin{remark}
We note that Theorem~\ref{aproximation2} provides also an error
estimate for the rotations in $H^1(\O)$-norm.
\end{remark}

In what follows, we restrict our analysis
considering clamped boundary conditions on the whole
boundary, essentially to exploit the associated regularity properties
of the continuous solution of the Reissner-Mindlin equations.
Nevertheless, the analysis in what follows can be straightforwardly
extended to other boundary conditions.

Now, we present the following result which
establish an improve error estimate for rotations in $\LO$-norm
and the deflection in $H^1(\O)$-norm.


\begin{proposition}
\label{aproximation3}
Assume that the hypotheses of Theorem~\ref{aproximation2} hold.
Moreover, assume that the domain $\O$ be either regular,
or piecewise regular and convex, that $g\in H^{1}(E)$ for all $E\in\CT_h$
and that $\G_c=\G$.
Then, for any $(w_{\Pi},\ga_{\Pi},\ga_0)\in[\LO]^{5}$
such that $w_{\Pi}|_{E}\in\bbP_{2}(E)$,
$\ga_{\Pi}|_{E}\in[\bbP_{1}(E)]^2$ and $\ga_{0}|_{E}\in[\bbP_{0}(E)]^2$
for all $E\in\CT_h$, there exists $C>0$
independent of $h$, $g$ and $t$ such that
\begin{align}
&\|\bthe-\bthe_h\|_{0,\O}\leq C(h+t)\left(|||w-w_{h},\ga-\ga_{h}|||+h\Vert g\Vert_{1,h,\O}
+\|\nabla w-\nabla w_{\Pi}\|_{1,h,\O}\right.\label{frgt1}\\
&\hspace{6.7cm}\left.+\|\ga-\ga_{\Pi}\|_{1,h,\O}+t^{-1}\|\ga-\ga_{0}\|_{0,\O}\right);\nonumber\\
&\|w-w_h\|_{1,\O}\leq C(\|\bthe-\bthe_h\|_{0,\O}+\|\ga-\ga_h\|_{0,\O}).\label{frgt2}
\end{align}
\end{proposition}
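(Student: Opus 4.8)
The two estimates are proved by the Aubin--Nitsche duality technique, adapted to the $t$-dependent energy norm $|||\cdot,\cdot|||$. The plan is to establish the $L^2$-bound \eqref{frgt1} on the rotations first, and then deduce \eqref{frgt2} for the deflection as a comparatively easy consequence.

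For \eqref{frgt1}, I would introduce an auxiliary (continuous) dual problem whose data is the error in the rotations, namely $\bthe-\bthe_h$. Concretely, let $(\psi,\bphi)\in\bfX$ solve the adjoint problem
\begin{equation*}
a(\nabla\psi+\bphi,\nabla v+\bta)+b(\bphi,\bta)=(\bthe-\bthe_h,\nabla v+\bta)_{0,\O}\qquad\forall(v,\bta)\in\bfX.
\end{equation*}
Because we have now restricted to the fully clamped case $\G_c=\G$ with $\O$ regular (or convex and piecewise regular), the Reissner--Mindlin regularity theory gives additional elliptic regularity for this dual solution, with a priori bounds on $\psi$, $\bphi$ and the associated dual shear that are uniform (or at worst grow like $(h+t)^{-1}$ in the right places) in the thickness $t$; this is exactly where the hypotheses $g\in H^1(E)$ and $\G_c=\G$ are used. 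Testing the dual problem with the error $(w-w_h,\ga-\ga_h)$, one expresses $\|\bthe-\bthe_h\|_{0,\O}^2$ as the dual bilinear form evaluated on the error.

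The next step is to insert into this expression a suitable interpolant/projection $(\psi_I,\bphi_I)\in\bfX_h$ of the dual solution and exploit Galerkin-type orthogonality. Using the discrete equation \eqref{forvar_3X} together with the consistency relations \eqref{consis0}--\eqref{consis1}, and adding and subtracting the local polynomial approximations $w_\Pi$, $\ga_\Pi$, $\ga_0$, the error functional splits into a primal-energy factor controlled by $|||w-w_h,\ga-\ga_h|||$, a loading-consistency term producing $h\|g\|_{1,h,\O}$ (handled exactly as the term $T_1$ in the proof of Lemma~\ref{cotaa}), and nonconformity/approximation terms producing $\|\nabla w-\nabla w_\Pi\|_{1,h,\O}$, $\|\ga-\ga_\Pi\|_{1,h,\O}$ and $t^{-1}\|\ga-\ga_0\|_{0,\O}$. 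Each factor coming from the \emph{dual} side carries a power of the dual solution's norm, and it is precisely the elliptic regularity bound that converts those dual norms into the overall prefactor $(h+t)$. Dividing through by $\|\bthe-\bthe_h\|_{0,\O}$ then yields \eqref{frgt1}.

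For \eqref{frgt2}, I would run the same duality argument but with the dual data equal to $w-w_h$, now aiming to bound $\|w-w_h\|_{1,\O}$. Alternatively—and more cheaply—one observes from the change of variables $\bthe=\nabla w+\ga$ that $\nabla w-\nabla w_h=(\bthe-\bthe_h)-(\ga-\ga_h)$, so a Poincar\'e inequality (valid because $w-w_h$ vanishes on $\G_c=\G$) gives $\|w-w_h\|_{1,\O}\le C\|\nabla(w-w_h)\|_{0,\O}\le C(\|\bthe-\bthe_h\|_{0,\O}+\|\ga-\ga_h\|_{0,\O})$, which is exactly \eqref{frgt2}. The main obstacle is the first step: setting up the dual problem with the correct right-hand side and extracting $t$-uniform (or appropriately $t$-scaled) regularity estimates for its solution, since without care the shear-penalty term $b(\cdot,\cdot)=t^{-2}(\cdot,\cdot)$ would spoil the uniformity and prevent the clean $(h+t)$ prefactor. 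Everything downstream reuses the consistency/stability machinery and the loading estimate already developed for Lemma~\ref{cotaa}.
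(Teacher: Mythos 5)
Your overall strategy coincides with the paper's: the same auxiliary dual problem with datum $\bthe-\bthe_h$, tested with $(v,\bta)=(w-w_h,\ga-\ga_h)$ so that $\nabla v+\bta=\bthe-\bthe_h$; the same subsequent splitting into an energy factor, a load-consistency term handled as $T_1$ in Lemma~\ref{cotaa}, and VEM-consistency terms treated via \eqref{consis0}--\eqref{consis1}; and the identical Poincar\'e plus triangle-inequality derivation of \eqref{frgt2}.

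There is, however, a genuine gap at exactly the point you yourself flag as ``the main obstacle'': the $t$-dependent regularity of the dual solution and the mechanism by which the prefactor $(h+t)$ appears. Your placeholder---bounds that are ``uniform (or at worst grow like $(h+t)^{-1}$ in the right places)''---is not a usable statement, and in fact the dual deflection $\widetilde{w}$ is \emph{not} uniformly in $H^3(\O)$ as $t\to 0$, so interpolating the pair $(\widetilde{w},\widetilde{\ga})$ directly cannot produce the claimed estimate. What the paper uses (citing \cite[Theorem~2.1]{LNS22}; this is where $\G_c=\G$ and the regularity of $\O$ enter) is a splitting $\widetilde{w}=\widetilde{w}^1+\widetilde{w}^2$ into the Kirchhoff limit solution $\widetilde{w}^1$ and a remainder $\widetilde{w}^2$, together with the bound
\begin{equation*}
\|\widetilde{w}^1\|_{3,\O}+t^{-1}\|\widetilde{w}^2\|_{2,\O}+t^{-1}\|\widetilde{\ga}\|_{1,\O}
\le C\|\bthe-\bthe_h\|_{0,\O}.
\end{equation*}
The decisive trick is then to insert into the duality identity the interpolant of the pair $(\widetilde{w}^1,\widetilde{\ga})$---\emph{not} of $(\widetilde{w},\widetilde{\ga})$. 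The interpolation error of the smooth Kirchhoff part and of $\widetilde{\ga}$ contributes the factor $h$ (via \eqref{cota1}), while the unapproximated remainder contributes the factor $t$ through $\|\widetilde{w}^2\|_{2,\O}\le t\|\bthe-\bthe_h\|_{0,\O}$ and through $\|\widetilde{\ga}-\widetilde{\ga}_I\|_{1,\O}\le C|\widetilde{\ga}|_{1,\O}\le Ct\|\bthe-\bthe_h\|_{0,\O}$ (via \eqref{cota2}). This is precisely how $(h+t)$ arises in \eqref{ecuacion1}, and the same scaled regularity is used again to close the bound on the consistency term $B_2$. Without this decomposition, and without the specific choice of what to interpolate, your argument cannot be completed. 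A smaller inaccuracy: the hypothesis $g\in H^{1}(E)$ plays no role in the dual regularity; it is used only in the load-consistency term (the paper's $B_1$, producing your $h\Vert g\Vert_{1,h,\O}$ contribution).
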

\begin{proof}
The core of the proof is based on a duality argument.
We first establish \eqref{frgt1}.
We begin by introducing the following well-posed auxiliary problem:
Find $(\widetilde{w},\widetilde{\ga})\in \bfX$ such that 
\begin{equation}
\label{P_aux}
a(\nabla \widetilde{w}+\widetilde{\ga},\nabla v+\bta)
+b(\widetilde{\ga},\bta)=(\bthe-\bthe_h,\nabla v+\bta)_{0,\O}\quad \forall (v,\bta)\in \bfX.
\end{equation}
The following regularity result for the solution of problem above
holds (see \cite[Theorem~2.1]{LNS22}):
\begin{equation}\label{regular}
 \|\widetilde{w}^1\|_{3,\O}+t^{-1}\|\widetilde{w}^2\|_{2,\O}
 +t^{-1}\|\widetilde{\ga}\|_{1,\O}\leq C\|\bthe-\bthe_h\|_{0,\O},
\end{equation}
where $\widetilde{w}^1$ is the solution
of the Kirchhoff limit problem and $\widetilde{w}^2:=\widetilde{w}-\widetilde{w}^1$.
Let $(\widetilde{w}^{1}_{I},\widetilde{\ga}_{I})\in\bfX_h$ be
the interpolant of $(\widetilde{w}^{1},\widetilde{\ga})$
given by Propositions~\ref{estima3} and \ref{estima4},
respectively.
Therefore, the above regularity result yield immediately:
\begin{align}
\label{cota1}
\|\widetilde{w}^1-\widetilde{w}^{1}_{I}\|_{1,\O}
+h\|\widetilde{w}^1-\widetilde{w}^{1}_{I}\|_{2,\O}
+t^{-1}h\|\widetilde{\ga}-\widetilde{\ga}_{I}\|_{0,\O}\leq h^{2}\|\bthe-\bthe_h\|_{0,\O},\\
\label{cota2}
\|\widetilde{w}^2\|_{2,\O}+\|\widetilde{\ga}
-\widetilde{\ga}_{I}\|_{1,\O}\leq t\|\bthe-\bthe_h\|_{0,\O}.
\end{align}
Next, choosing $v:=(w-w_{h})$ and $\bta=(\ga-\ga_{h})$ in \eqref{P_aux},
so that $\nabla v + \bta = \bthe - \bthe_h$,
and then adding and subtracting the term
$\nabla \widetilde{w}^{1}_I+\widetilde{\ga}_{I}$,
we obtain
\begin{align}
\nonumber
\|\bthe-\bthe_h\|_{0,\O}^2 &=a(\bthe-\bthe_h,\nabla \widetilde{w}+\widetilde{\ga}
-\nabla \widetilde{w}_{I}^1-\widetilde{\ga}_{I})+a(\bthe-\bthe_h,\nabla\widetilde{w}_{I}^{1}
+\widetilde{\ga}_{I})\\
\nonumber
&\hspace{0.4cm}+b(\ga-\ga_{h},\widetilde{\ga}-\widetilde{\ga}_{I})
+b(\ga-\ga_{h},\widetilde{\ga}_{I})\\
\label{errofin}
&\leq |||w-w_{h},\ga-\ga_{h}|||\;|||\widetilde{w}-\widetilde{w}^{1}_I,\widetilde{\ga}
-\widetilde{\ga}_{I}|||+\left|a(\bthe-\bthe_h,\nabla\widetilde{w}^{1}_I
+\widetilde{\ga}_{I})+b(\ga-\ga_{h},\widetilde{\ga}_{I})\right|,
\end{align}
where we have used that the bilinear forms are bounded uniformly
in $t$ with respect to the $||| \cdot |||$ norm.
Now, we bound each term on the right hand side above.
For the first term we have, using \eqref{cota1} and \eqref{cota2},
\begin{align*}
|||\widetilde{w}-\widetilde{w}^{1}_I,\widetilde{\ga}-\widetilde{\ga}_{I}|||^2
&\leq C\left(\|\widetilde{w}-\widetilde{w}^{1}_I\|_{2,\O}^2+t^{-2}\|\widetilde{\ga}-\widetilde{\ga}_{I}\|_{0,\O}^2+\|\widetilde{\ga}-\widetilde{\ga}_{I}\|_{1,\O}^2\right)\\
&\leq C\left(  \|\widetilde{w}^1-\widetilde{w}^{1}_I\|_{2,\O}^2+\|\widetilde{w}^2\|_{2,\O}^2+t^{-2}\|\widetilde{\ga}-\widetilde{\ga}_{I}\|_{0,\O}^2+\|\widetilde{\ga}-\widetilde{\ga}_{I}\|_{1,\O}^2\right)\\
&\leq C(h^2+t^2)\|\bthe-\bthe_h\|_{0,\O}^2.
\end{align*}
Therefore
\begin{equation}
\label{ecuacion1}
|||\widetilde{w}-\widetilde{w}^{1}_I,\widetilde{\ga}-\widetilde{\ga}_{I}|||\leq
C (h+t)\|\bthe-\bthe_h\|_{0,\O}.
\end{equation}
For the second term on the right hand of \eqref{errofin},
since $(\widetilde{w}_{I}^{1},\widetilde{\ga}_{I})\in\bfX$,
we have that (see Problems~\ref{P_2} and \ref{P_4}),
\begin{align}
\nonumber
\left|a(\bthe-\bthe_h,\nabla\widetilde{w}^{1}_I+\widetilde{\ga}_{I})
+b(\ga-\ga_{h},\widetilde{\ga}_{I})\right| &=\left|(g,\widetilde{w}_{I}^{1})_{0,\O}
-a(\bthe_h,\nabla\widetilde{w}^{1}_I+\widetilde{\ga}_{I})-b(\ga_{h},\widetilde{\ga}_{I})\right|\\
\nonumber
&\hspace{-4.9cm}=\left| (g,\widetilde{w}_{I}^{1})_{0,\O}-\left<g_{h},\widetilde{w}^{1}_I\right>_{h}
+a_{h}(\bthe_h,\nabla\widetilde{w}^{1}_I+\widetilde{\ga}_{I})+b_{h}(\ga_{h},\widetilde{\ga}_{I})
-a(\bthe_h,\nabla\widetilde{w}^{1}_I+\widetilde{\ga}_{I})-b(\ga_{h},\widetilde{\ga}_{I})\right|\\
\label{contieneA}
&\hspace{-4.9cm}\le B_{1}+B_{2},
\end{align}
where
$$B_{1}:=\left\vert\left(g,\widetilde{w}_{I}^1\right)_{0,\O}-\left<g_{h},\widetilde{w}^{1}_I\right>_{h}\right\vert$$
and
$$B_{2}:=\left\vert a_{h}(\bthe_h,\nabla\widetilde{w}_{I}^1+\widetilde{\ga}_{I})
-a(\bthe_h,\nabla\widetilde{w}_{I}^1+\widetilde{\ga}_{I})+b_{h}(\ga_{h},\widetilde{\ga}_{I})
-b(\ga_{h}\widetilde{\ga}_{I})\right\vert.$$
We now bound $B_{1}$ and $B_{2}$ uniformly in $t$.

We begin with the term $B_{1}$. First adding and subtracting
$\widetilde{w}^{1}$ we have
\begin{align}
B_{1}&\le \left|(g,\widetilde{w}^{1}_I-\widetilde{w}^{1})_{0,\O}\right|
+\left|(g,\widetilde{w}^{1})_{0,\O}-\left<g_{h},\widetilde{w}^{1}\right>_{h}\right|
+\left|\left<g_{h},\widetilde{w}^{1}_I-\widetilde{w}^{1}\right>_{h}\right|\nonumber\\
&\le h^{2}\|g\|_{0,\O}|\widetilde{w}^{1}|_{2,\O}
+\left|(g,\widetilde{w}^{1})_{0,\O}-\left<g_{h},\widetilde{w}^{1}\right>_{h}\right|,\label{conti}
\end{align}
where we have used the Cauchy-Schwarz inequality and Proposition~\ref{estima3}
to bound the first term; note moreover that the last term on the right hand
side above vanish as a consequence of \eqref{gh} and the definition of $\widetilde{w}_{I}^{1}$:
\begin{align*}
\left|\left<g_{h},\widetilde{w}_{I}^{1}-\widetilde{w}^{1}\right>_{h}\right|
=\left|\sum_{E\in\CT_h}\left(\bar{g}_{E}\sum_{i=1}^{N_E}(\widetilde{w}_{I}^{1}-\widetilde{w}^{1})(\vv_i)
\omega_E^{i}\right)\right|=0.
\end{align*}
Now, we bound the second term on the right hand side of
\eqref{conti} and we follow similar steps as
in Lemma~\ref{cotaa} to derive \eqref{cotag}. In fact,
using the definition \eqref{gh},
and adding and subtracting $g_h$ we rewrite the term as follows
\begin{align*}
\left|(g,\widetilde{w}^{1})_{0,\O}-\left<g_{h},\widetilde{w}^{1}\right>_{h}\right|
&=\left|\sum_{E\in\CT_h}\int_{E}g\widetilde{w}^{1}
-\sum_{E\in\CT_h}\left(\bar{g}_{E}\sum_{i=1}^{N_E}\widetilde{w}^{1}(\vv_i)\omega_E^{i}\right)\right|\\
&\hspace{-1cm}\leq \left| \sum_{E\in \CT_h}\int_{E}\bar{g}_{E}\widetilde{w}^{1}
-\sum_{E\in\CT_h}\left(\bar{g}_{E}\sum_{i=1}^{N_E}\widetilde{w}^{1}(\vv_i)\omega_E^{i}
\right)\right|
+\sum_{E\in\CT_h}\|g-\bar{g}_{E}\|_{0,E}\|\widetilde{w}^{1}-p\|_{0,E},
\end{align*}
for any $p\in \bbP_{0}(E)$. Now, taking $p$ as in
Proposition~\ref{estima1} and using that
$g|_{E}\in H^{1}(E)$ and \cite[Lemma 4.3.8]{BS-2008}.
we have that
\begin{equation}\label{number1}
\begin{split}
\left|(g,\widetilde{w}^{1})_{0,\O}-\left<g_{h},\widetilde{w}^{1}\right>_{h}\right|\le&
Ch^2\Vert g\Vert_{1,h,\O}\|\widetilde{w}^{1}\|_{1,\O}
+\left|\sum_{E\in\CT_h}\int_{E}\bar{g}_{E}\widetilde{w}^{1}
-\sum_{E\in\CT_h}\left(\bar{g}_{E}\sum_{i=1}^{N_E}\widetilde{w}^{1}(\vv_i)\omega_E^{i}
\right)\right|\\
=&B_{1,1}+B_{1,2}.
\end{split}
\end{equation}
In what follows
we will manipulate the terms $B_{1,2}$:
adding and subtracting $p_{1}\in \bbP_{1}(E)$,
and the fact that \eqref{gh} is exact for linear functions, we have
\begin{equation}\label{LR:1}
\begin{aligned}
B_{1,2}\le&\left|\sum_{E\in\CT_h}\int_{E}\bar{g}_{E}(\widetilde{w}^{1}-p_{1})\right|
+\left|\sum_{E\in\CT_h}\left(\bar{g}_{E}\left(\sum_{i=1}^{N_E}
(\widetilde{w}^{1}-p_{1})(\vv_i)\omega_E^{i}\right)\right)\right|\\
&\leq \|g\|_{0,\O}\left(\sum_{E\in\CT_h}\|\widetilde{w}^{1}-p_{1}\|_{0,E}^{2}\right)^{1/2}
+\|g\|_{0,\O}\left(\sum_{E\in\CT_h}h_{E}^{2}\|\widetilde{w}^{1}-p_{1}\|_{\infty,E}^{2}\right)^{1/2}.
\end{aligned}
\end{equation}
By polynomial approximation results on star-shaped polygons we now have
\begin{equation}\label{LR:2}
\begin{aligned}
& \|\widetilde{w}^{1}-p_{1}\|_{\infty,E}  \le C h_{E}|\widetilde{w}^{1}|_{2,E}.
\end{aligned}
\end{equation}
In fact, bound can be derived, for instance, using the following brief guidelines.
Let $B$ be the ball with the same center appearing in ${\bf A}_2$, but radius $h_E$.
It clearly holds $E \subset B$. One can then extend the function $\widetilde{w}^{1}$
to a function (still denoted by $\widetilde{w}^{1}$) in $H^2(B)$ with a uniform bound
$\| \widetilde{w}^{1} \|_{2,B} \le C \| \widetilde{w}^{1} \|_{2,E}$ (see for instance \cite{Stein},
where we use also that due to ${\bf A}_2$ all the elements $E$
of the mesh family are uniformly Lipshitz continuous).
Then, the result follows from the analogous known result
on balls and some very simple calculations.

Hence, using the fact that $\|\widetilde{w}^{1}-p_{1}\|_{0,E}\le
h_E\|\widetilde{w}^{1}-p_{1}\|_{\infty,E}$,
from \eqref{LR:1} and \eqref{LR:2}, we obtain
\begin{equation}\label{number2}
B_{1,2}\le Ch^2\|g\|_{0,\O} |\widetilde{w}^{1}|_{2,\O}.
\end{equation}
Finally, from \eqref{conti}, \eqref{number1} and \eqref{number2}
we have the following bound for the term $B_{1}$:
\begin{align*}
B_{1}\le& Ch^2\|g\|_{1,h,\O}\|\widetilde{w}^{1}\|_{2,\O}\le Ch^2\|g\|_{1,h,\O}\|\bthe-\bthe_h\|_{0,\O}
\le C(h+t)h\|g\|_{1,h,\O}\|\bthe-\bthe_h\|_{0,\O}.
\end{align*}

Now, we bound the term $B_{2}$ in \eqref{contieneA}.
First, we consider $(w_{\Pi},\ga_{\Pi},\ga_0)\in[\LO]^{5}$
such that $w_{\Pi}|_{E}\in\bbP_{2}(E)$,
$\ga_{\Pi}|_{E}\in[\bbP_{1}(E)]^2$ and $\ga_{0}|_{E}\in[\bbP_{0}(E)]^2$
and define $\bthe_{\Pi}:=\nabla w_{\Pi}+\ga_{\Pi}$.
Moreover, we consider $(\widetilde{w}^{1}_{\Pi},\widetilde{\ga}_{\Pi},\widetilde{\ga}_0)\in[\LO]^{5}$
such that $\widetilde{w}^{1}_{\Pi}|_{E}\in\bbP_{2}(E)$,
$\widetilde{\ga}_{\Pi}|_{E}\in[\bbP_{1}(E)]^2$ and $\widetilde{\ga}_{0}|_{E}\in[\bbP_{0}(E)]^2$.
Thus, using the consistency property we rewrite the term as follows
\begin{align*}
B_{2}&= \Big\vert\sum_{E\in \CT_{h}}\left(a_{h}^{E}(\bthe_h,\nabla\widetilde{w}^{1}_I
+\widetilde{\ga}_{I}-(\nabla\widetilde{w}^{1}_{\Pi}+\widetilde{\ga}_{\Pi}))
+a_{h}^{E}(\bthe_h,\nabla\widetilde{w}^{1}_{\Pi}+\widetilde{\ga}_{\Pi})\right)\\
&\hspace{0.5cm} - \sum_{E\in \CT_{h}}\left(a^E(\bthe_h,\nabla\widetilde{w}^{1}_I
+\widetilde{\ga}_{I}-(\nabla\widetilde{w}^{1}_{\Pi}+\widetilde{\ga}_{\Pi}))
+a^E(\bthe_h,\nabla\widetilde{w}^{1}_{\Pi}+\widetilde{\ga}_{\Pi})\right)\\
&\hspace{0.5cm} +\sum_{E\in \CT_{h}}\left(b_{h}^E(\ga_{h},\widetilde{\ga}_{I}
-\widetilde{\ga}_{0})+b_{h}^E(\ga_{h},\widetilde{\ga}_{0})-b^{E}(\ga_{h},\widetilde{\ga}_{I}
-\widetilde{\ga}_{0})-b^E(\ga_{h},\widetilde{\ga}_{0})\right)\Big\vert\\
&=\Big\vert\sum_{E\in \CT_{h}}\left(a_{h}^{E}(\bthe_h-\bthe_{\Pi},\nabla\widetilde{w}^{1}_I
+\widetilde{\ga}_{I}-(\nabla\widetilde{w}^{1}_{\Pi}+\widetilde{\ga}_{\Pi}))
-a^{E}(\bthe_h-\bthe_{\Pi},\nabla\widetilde{w}^{1}_I+\widetilde{\ga}_{I}
-(\nabla\widetilde{w}^{1}_{\Pi}+\widetilde{\ga}_{\Pi}))\right)\\
&\hspace{0.5cm} +\sum_{E\in \CT_{h}}\left(b_{h}^E(\ga_{h}-\ga_{0},\widetilde{\ga}_{I}
-\widetilde{\ga}_{0})-b^{E}(\ga_{h}-\ga_{0},\widetilde{\ga}_{I}-\widetilde{\ga}_{0})\right)\Big\vert.
\end{align*}
Therefore, we have
\begin{align*}
B_{2}&\leq C\left(\|\bthe_h-\bthe_{\Pi}\|_{1,h,\O}+t^{-1}\|\ga_{h}-\ga_{0}\|_{0,\O}\right)\times\\
&\qquad\left(\sum_{E\in \CT_{h}}\|\nabla\widetilde{w}^{1}_I-\nabla\widetilde{w}^{1}_{\Pi}\|_{1,E}^{2}
+\|\widetilde{\ga}_{I}-\widetilde{\ga}_{\Pi}\|_{1,E}^{2}+t^{-2}\|\widetilde{\ga}_{I}
-\widetilde{\ga}_{0}\|_{0,E}^{2}\right)^{1/2}\\
&\leq C\left(\|\bthe_h-\bthe_{\Pi}\|_{1,h,\O}+t^{-1}\|\ga_{h}
-\ga_{0}\|_{0,\O}\right)\left(h|\widetilde{w}^{1}|_{3,\O}+ht^{-1}|\widetilde{\ga}|_{1,\O}
+|\widetilde{\ga}|_{1,\O}\right),
\end{align*}
where we have added and subtracted $\nabla\widetilde{w}^{1}$ and $\widetilde{\ga}$
and then we have used Propositions~\ref{estima3}, \ref{estima1} and \ref{estima4},
respectively. Finally, using \eqref{regular} and the triangular inequality we have
\begin{equation*}\label{ecuacion2}
B_{2}\leq C(h+t)\|\bthe-\bthe_h\|_{0,\O}\left(|||w-w_{h},\ga-\ga_{h}|||
+\|\bthe-\bthe_{\Pi}\|_{1,h,\O}+t^{-1}\|\ga-\ga_{0}\|_{0,\O}\right).
\end{equation*}
Hence, \eqref{frgt1} follows from \eqref{errofin}, combining the estimate
\eqref{ecuacion1}, with the above bounds for $B_{1}$ and $B_{2}$
and the definition of $\bthe$. In fact, we obtain that
\begin{align*}
&\|\bthe-\bthe_h\|_{0,\O}\leq C(h+t)\left(|||w-w_{h},\ga-\ga_{h}|||+h\Vert g\Vert_{1,h,\O}
+\|\nabla w-\nabla w_{\Pi}\|_{1,h,\O}\right.\\
&\hspace{6.7cm}\left.+\|\ga-\ga_{\Pi}\|_{1,h,\O}+t^{-1}\|\ga-\ga_{0}\|_{0,\O}\right).
\end{align*}

Finally, bound \eqref{frgt2} follows from the Poincar\'e inequality
and the triangular inequality we have that
\begin{equation*}\label{hgjrt}
\|w-w_h\|_{1,\O}\leq C\|\nabla w-\nabla w_h\|_{0,\O}=C\|\bthe-\ga-(\bthe_h-\ga_h)\|_{0,\O}
\leq C(\|\bthe-\bthe_h\|_{0,\O}+\|\ga-\ga_h\|_{0,\O}).
\end{equation*}
The proof is complete.
\end{proof}

Finally, we obtain the following result.
\begin{corollary}\label{picodepedra}
Assume that the hypotheses of Theorem~\ref{aproximation2} hold.
Moreover, assume that the domain $\O$ be either regular,
or piecewise regular and convex, that $g\in H^{1}(E)$ for all $E\in\CT_h$
and that $\G_c=\G$.
Then, there exists $C>0$
independent of $h$, $g$ and $t$ such that
\begin{align*}
\|\bthe-\bthe_h\|_{0,\O}+ \|w-w_h\|_{1,\O}&\leq
C(h+t)h\left(t^{-1}|\ga|_{1,\O}+|\bthe|_{2,\O}+|w|_{3,\O}+\Vert g\Vert_{1,h,\O}\right).\\
\end{align*}
 \end{corollary}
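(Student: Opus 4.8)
The plan is to obtain the stated estimate by feeding the energy-norm bound of Theorem~\ref{aproximation2} into the duality estimates \eqref{frgt1} and \eqref{frgt2} of Proposition~\ref{aproximation3}, after a judicious choice of the piecewise polynomials $(w_{\Pi},\ga_{\Pi},\ga_0)$ that are left free in \eqref{frgt1}. Since both results are already at our disposal, the work reduces almost entirely to controlling the additional interpolation terms appearing on the right-hand side of \eqref{frgt1} and to exploiting the $t$-weight hidden in the energy norm \eqref{norm}.

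First I would fix the polynomial data entering \eqref{frgt1} by means of Proposition~\ref{estima1}: take $w_{\Pi}|_E\in\bbP_2(E)$ to be the local polynomial approximation of $w$ (so that $\nabla w_{\Pi}|_E\in[\bbP_1(E)]^2$), $\ga_{\Pi}|_E\in[\bbP_1(E)]^2$ that of $\ga$, and $\ga_0|_E\in[\bbP_0(E)]^2$ that of $\ga$. Proposition~\ref{estima1}, applied element by element and summed over $\CT_h$, then furnishes the bounds
\begin{equation*}
\|\nabla w-\nabla w_{\Pi}\|_{1,h,\O}\le Ch\,|w|_{3,\O},\qquad
\|\ga-\ga_{\Pi}\|_{1,h,\O}\le Ch\,|\ga|_{2,\O},\qquad
t^{-1}\|\ga-\ga_0\|_{0,\O}\le Ch\,t^{-1}|\ga|_{1,\O}.
\end{equation*}
Inserting these into \eqref{frgt1} together with the energy estimate $|||w-w_h,\ga-\ga_h|||\le Ch\bigl(t^{-1}|\ga|_{1,\O}+|\bthe|_{2,\O}+|w|_{3,\O}+\|g\|_{0,\O}\bigr)$ from Theorem~\ref{aproximation2}, and then using $\|g\|_{0,\O}\le\|g\|_{1,h,\O}$ together with $|\ga|_{2,\O}\le|w|_{3,\O}+|\bthe|_{2,\O}$ (as $\ga=\bthe-\nabla w$), yields the claimed bound for $\|\bthe-\bthe_h\|_{0,\O}$.

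It then remains to control $\|w-w_h\|_{1,\O}$ through \eqref{frgt2}. The term $\|\bthe-\bthe_h\|_{0,\O}$ there has just been estimated, so the only genuinely new quantity is $\|\ga-\ga_h\|_{0,\O}$. Here I would invoke the structure of the energy norm \eqref{norm}, which contains the summand $t^{-2}\|\ga-\ga_h\|_{0,\O}^2$; consequently $\|\ga-\ga_h\|_{0,\O}\le t\,|||w-w_h,\ga-\ga_h|||$, and Theorem~\ref{aproximation2} turns this into a bound of order $Cth(\cdots)\le C(h+t)h(\cdots)$. Summing the two contributions in \eqref{frgt2} closes the estimate.

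The only mildly delicate point, and the place where the argument is really doing something rather than merely chaining inequalities, is the bookkeeping of the powers of $t$: one must check that the $t^{-1}$ weights multiplying $|\ga|_{1,\O}$ and $\|\ga-\ga_0\|_{0,\O}$ recombine with the prefactor $(h+t)$ so that no negative power of $t$ survives in the final constant, and that the extra factor $t$ gained for $\|\ga-\ga_h\|_{0,\O}$ is precisely what makes the deflection estimate inherit the same $(h+t)h$ rate as the rotation estimate. No genuinely new idea beyond Theorem~\ref{aproximation2} and Proposition~\ref{aproximation3} is needed.
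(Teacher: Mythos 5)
Your proposal is correct and follows essentially the same route as the paper: the paper's proof is exactly the combination of Proposition~\ref{aproximation3} with Theorem~\ref{aproximation2}, Proposition~\ref{estima1} (to fix the free polynomials $w_\Pi$, $\ga_\Pi$, $\ga_0$), and the inequality $\|\ga-\ga_h\|_{0,\O}\leq t\,|||w-w_{h},\ga-\ga_h|||$ coming from the $t^{-2}$-weight in the energy norm. Your write-up merely makes explicit the bookkeeping (the choice $|\ga|_{2,\O}\le|w|_{3,\O}+|\bthe|_{2,\O}$, $\|g\|_{0,\O}\le\|g\|_{1,h,\O}$, and the absorption of $th$ into $(h+t)h$) that the paper leaves implicit.
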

\begin{proof} 
The proof follows directly from Proposition~\ref{aproximation3},
combining Theorem~\ref{aproximation2}, Propositions~\ref{estima1}
and the fact that $\|\ga-\ga_h\|_{0,\O}\leq t|||w-w_{h},\ga-\ga_h|||$.
\end{proof}

\begin{remark}
We note that the shear strain variable in the present paper is given
by $\ga=\nabla w-\bthe$ and it is related with the usual
scaled shear strain used in other Reissner-Mindlin contributions
in the literature as follows $Q=t^{-2}\ga$. Since $t^{-1}\ga=tQ$
is a quantity that is known to be uniformly bounded
for clamped boundary conditions
in the correct Sobolev norms (see, e.g \cite{AF,BFS91}).
Therefore, the factors $t^{-1}$ appearing in Theorem~\ref{aproximation2}
and Corollary~\ref{picodepedra} are not a source of locking.
\end{remark}

\begin{remark}\label{rem:reg}
We note that in our convergence results, in order to obtain the
full convergence rate in $h$ (independently of the thickness $t$)
we need $|w|_{3,\Omega}$ to be bounded uniformly in $t$.
We observe that such condition can be
achieved on a smooth domain $\Omega$ and regular data (see \cite[Remark 1]{ABFM2007}).
On the other hand, on less regular domains $\Omega$,
even in the presence of regular data, the regularity for $w$ is not
assured due to the presence of layers at the boundaries of the plate and singularities at corners.
Such limitation of the above theoretical analysis is related to the adopted formulation
and is, somehow, the drawback related to the advantage of having
a method with $C^1$ deflections, that is therefore able to give
(at the limit for vanishing thickness) a Kirchhoff conforming solution.
We finally note that, in practice, this kind of difficulty can be
effectively dealt with by an ad-hoc refinement of the mesh
near the boundaries or corners of the plate; an example is shown later in Section \ref{sec:L-shaped}.
\end{remark}

%
%

\section{Numerical results}
\label{SEC:NUMER}

We report in this section some numerical examples which
have allowed us to assess the theoretical results proved above.
We have implemented in a MATLAB code our method
on arbitrary polygonal meshes, by following the ideas proposed
in \cite{BBMR2014}. To complete the choice of the VEM,
we have to fix the bilinear forms $S^E(\cdot,\cdot)$
and $S_0^E(\cdot,\cdot)$ satisfying
\eqref{stabilS} and \eqref{stabilS0}, respectively.
Proceeding as in \cite{BBMR2014},
a natural choice for $S^E(\cdot,\cdot)$  is given by
\begin{equation*}
S^E(\ga_{h},\bta_h):= \sigma_{E}\left(\sum_{i=1}^{2 N_{E}}\ga_{h}
(\vv_{i})\bta_h(\vv_i)+\sum_{j=1}^{N_{E}}\left(\dfrac{1}{|e_{j}|}
\int_{e_{j}}\ga_{h}\cdot\boldsymbol{t} \right)\left(\dfrac{1}{|e_{j}|}
\int_{e_{j}}\bta_h\cdot\boldsymbol{t}\right)\right),\quad \ga_{h},\bta_{h}\in\bVE,
\end{equation*}
where $\sigma_{E}>0$ is
a multiplicative factor
to take into account the magnitude
of the material parameter, for instance,
in the numerical tests a possible choice could be to set 
$\sigma_{E}>0$ as the mean
value of the eigenvalues of the local matrix
$a^E(\PiE\ga_h,\PiE\bta_h)$.
This ensure that the stabilizing term scales as
$a^E(\bta_h,\bta_h)$.
Now, a choice for $S_{0}^E(\cdot,\cdot)$ is given by
\begin{equation*}
S_{0}^E(\ga_{h},\bta_h):=\frac{\l h_{E}^{2}}{t^2}\left(\sum_{i=1}^{2N_{E}}\ga_{h}
(\vv_{i})\bta_h(\vv_{i})+\sum_{j=1}^{N_{E}}\left(\dfrac{1}{|e_{j}|}
\int_{e_{j}}\ga_{h}\cdot\boldsymbol{t} \right)\left(\dfrac{1}{|e_{j}|}
\int_{e_{j}}\bta_h\cdot\boldsymbol{t}\right)\right),\quad \ga_{h},\bta_{h}\in\bVE.
\end{equation*}
In this case, we have multiplied the stabilizing term by the material/geometric parameter $\l t^{-2}$
to ensure \eqref{stabilS0}. A proof of \eqref{stabilS}-\eqref{stabilS0}
for the above (standard) choices could be derived following the arguments in \cite{BLRXX}.

The choices above are standard in the Virtual Element literature,
and correspond to a scaled identity matrices in the space of the
degrees of freedom values.


To test the convergence properties of the method,
we introduce the following discrete $L^{2}$-like norm:
for any sufficiently regular function $\bv$,
$$\|\bv\|_{0,\O}^{2}:=\disp\sum_{E\in \CT_{h}}
\left(|E|\disp\sum_{i=1}^{N_E}\left(\bv(\vv_i)\right)^{2}\right),$$
with $|E|$ being the area of element $E$.
We also define the relative errors in discrete $L^{2}$-like norms (based on the vertex values):
\begin{equation*}
\left(e_{w}\right)^{2} :=
\dfrac{\disp\sum_{E\in \CT_{h}}\left(|E|\disp\sum_{i=1}^{N_E}
\left(w(\vv_i)-w_{h}(\vv_i)\right)^{2}\right)}{\disp\sum_{E\in \CT_{h}}
\left(|E|\disp\sum_{i=1}^{N_E}\left(w(\vv_i)\right)^{2}\right)} ,
\end{equation*}
and the obvious analogs for $e_{\nabla w}$ and $e_{\boldsymbol{\theta}}$.
Finally, we introduce the relative error in the energy norm
\begin{align*}
\left(\boldsymbol{\CE}\right)^{2}&:=\dfrac{\mathcal{A}_h((w-w_h,\ga-\ga_h),
(w-w_h,\ga-\ga_h))}{\mathcal{A}_h((w,\ga), (w,\ga))},
\end{align*}
where $\mathcal{A}_h(\cdot,\cdot)$ corresponds to the discrete bilinear form
on the left hand side of Problem~\ref{P_4}.

\subsection{Test 1:}

As a test problem we have taken an isotropic and homogeneous plate $\O:=(0,1)^{2}$,
clamped on the whole boundary, for which the analytical
solution is explicitly known (see \cite{CL}).

Choosing the transversal load $g$ as:
\begin{align*}
g(x,y)&=\dfrac{\mathbb{E}}{12(1-\nu^{2})}\left[12y(y-1)(5x^{2}-5x+1)(2y^{2}(y-1)^{2}+x(x-1)(5y^{2}-5y+1)\right.\\
&\left.+12x(x-1)(5y^{2}-5y+1)(2x^{2}(x-1)^{2}+y(y-1)(5x^{2}-5x+1)\right],
\end{align*}
the exact solution of the problem is given by:
\begin{align*}
w(x,y)&=\dfrac{1}{3}x^{3}(x-1)^{3}y^{3}(y-1)^{3}\\
&-\dfrac{2t^{2}}{5(1-\nu)}\left[y^{3}(y-1)^{3}x(x-1)(5x^{2}-5x+1)+x^{3}(x-1)^{3}y(y-1)(5y^{2}-5y+1)\right],
\end{align*}
\[\boldsymbol{\theta}(x,y)=\left[ \begin{array}{c}
y^{3}(y-1)^{3}x^{2}(x-1)^{2}(2x-1)\\
x^{3}(x-1)^{3}y^{2}(y-1)^{2}(2y-1)
\end{array} \right].
\]
The shear modulus $\l$ is given by $\l:=\dfrac{5\mathbb{E}}{12(1+\nu)}$
(choosing 5/6 as shear correction factor), while
the material constants have been chosen
$\mathbb{E} = 1$ and $\nu=0$.

We have tested the method by using
different values of the plate thickness:
$t=1.0e-01$, $t=1.0e-02$ and $t=1.0e-03$.
Moreover, we have used different families of meshes
(see Figure~\ref{FIG:VM1}):
\begin{itemize}
\item $\CT_h^1$: triangular meshes;
\item $\CT_h^2$: trapezoidal meshes which consist
of partitions of the domain into $N\times N$ congruent
trapezoids, all similar to the trapezoid with
vertices $(0,0)$, $(\dfrac{1}{2},0)$, $(\dfrac{1}{2},\dfrac{2}{3})$ and $(0,\dfrac{1}{3})$; 
\item $\CT_h^3$: triangular meshes, considering
the middle point of each edge as a new degree of
freedom but moved randomly; note that these meshes
contain non-convex elements.
\end{itemize}
The refinement parameter $h$ used to label each mesh is $h=\disp\max_{E\in \CT_{h}} h_{E}$.
 \begin{figure}[H]
\begin{center}
\begin{minipage}{4.3cm}
\centering\includegraphics[height=4.1cm, width=4.1cm]{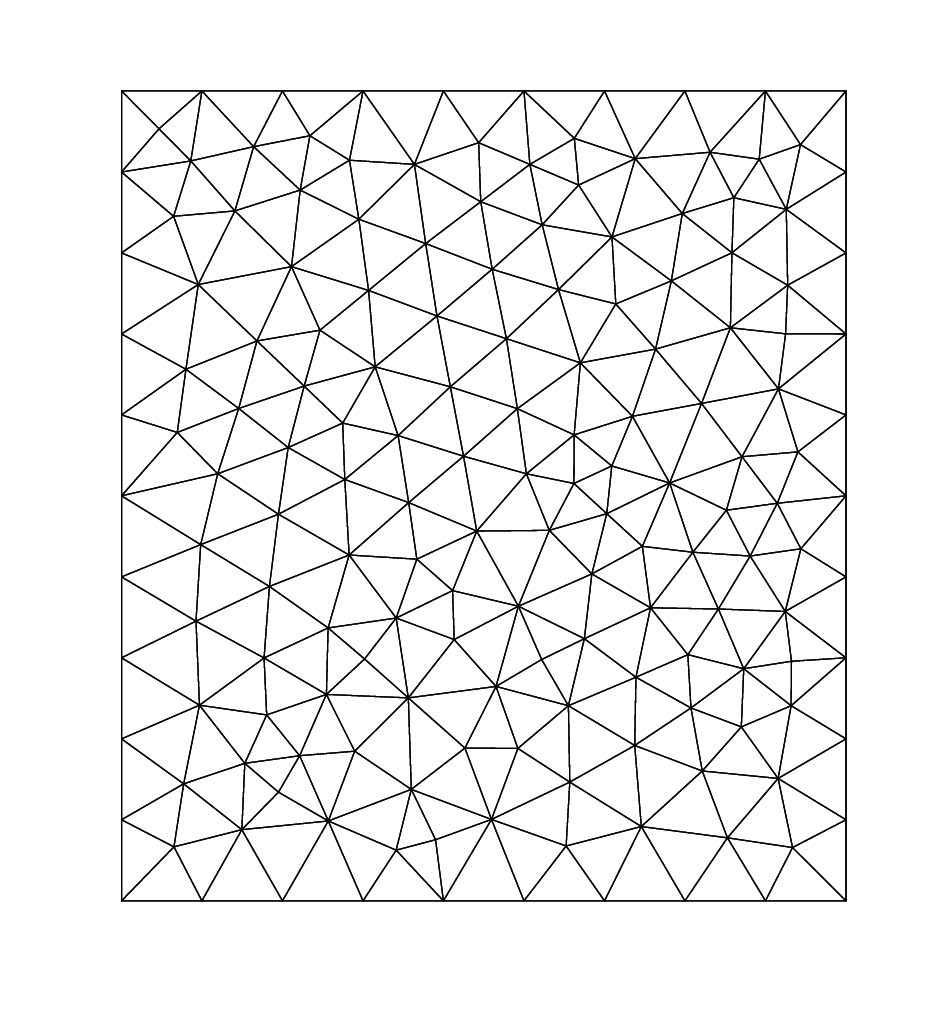}
\end{minipage}
\begin{minipage}{4.3cm}
\centering\includegraphics[height=4.1cm, width=4.1cm]{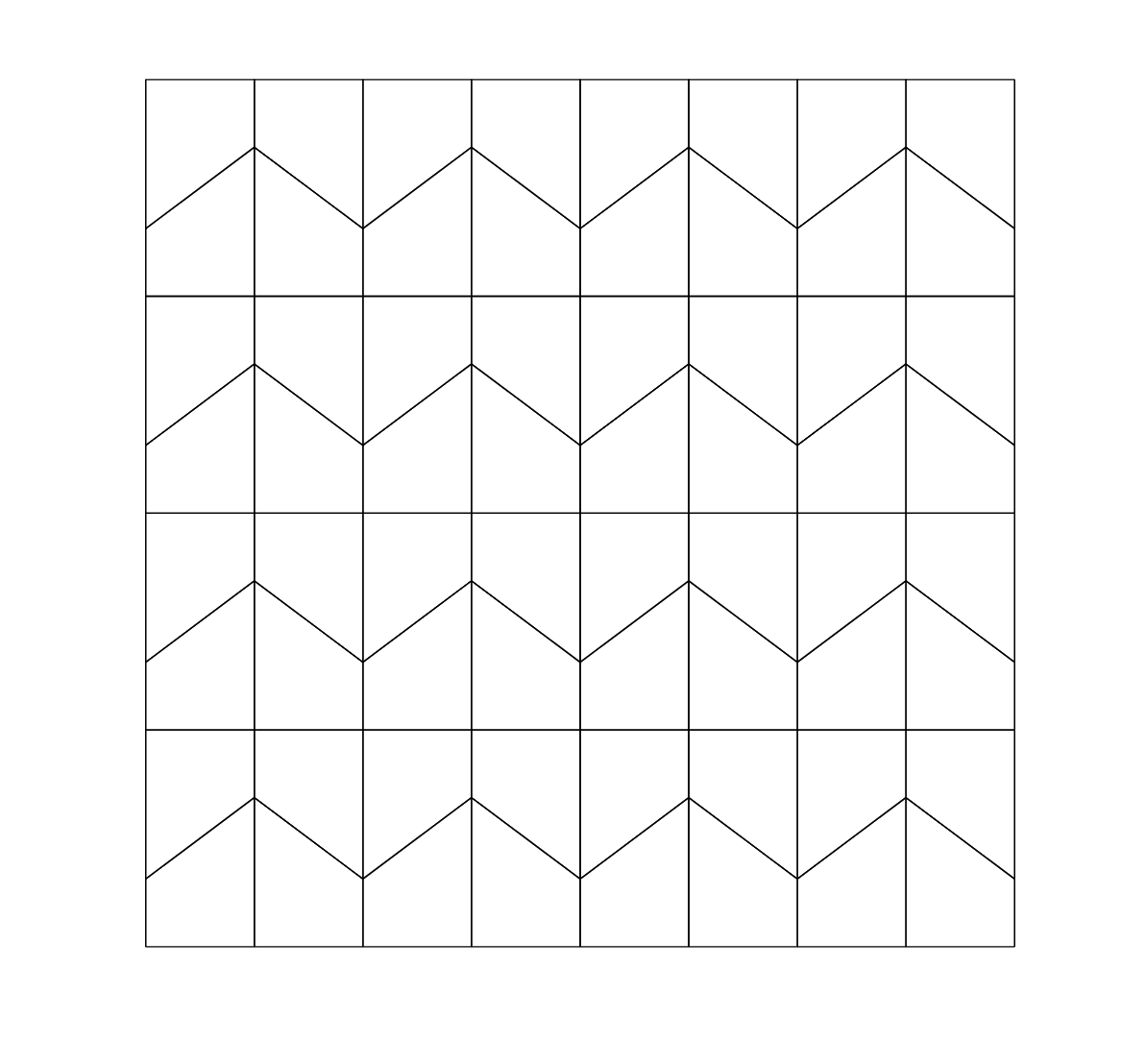}
\end{minipage}
\begin{minipage}{4.3cm}
\centering\includegraphics[height=4.1cm, width=4.1cm]{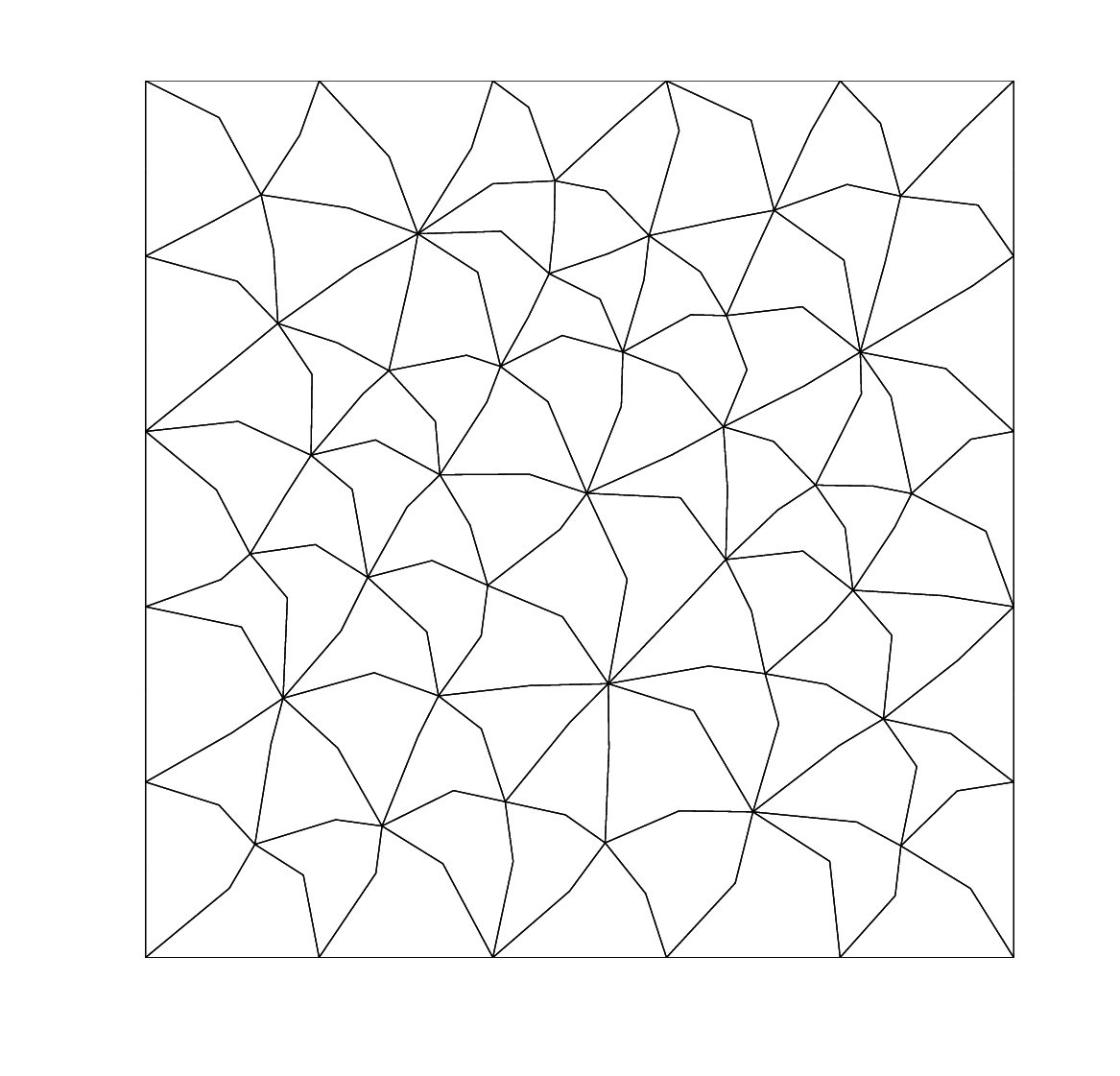}
\end{minipage}
\caption{ Sample meshes: $\CT_h^1$ (left), $\CT_h^2$ (middle) and
$\CT_h^{3}$ (right) with $h=0.1189 $, $h=0.1719$ and $h=0.11078$, respectively. }
 \label{FIG:VM1}
\end{center}
\end{figure}

We report in Table~\ref{TABLA:1}, Table~\ref{TABLA:2} 
and Table~\ref{TABLA:3} the relative errors in
the discrete $L^{2}$-norm of $w$,
$\nabla w$ and $\bthe$, together with the
relative errors in the energy norm,
for each family of meshes and different refinement levels.
We consider different thickness: $t=1.0e-01$, $t=1.0e-02$ and $t=1.0e-03$, respectively.
We also include in these table the experimental rate of convergence.
  
\begin{table}[H]
\begin{center}
\caption{$\CT_h^1$: Computed error in the discrete $L^{2}$-norm
with  $t=1.0e-01$, $t=1.0e-02$ and $t=1.0e-03$, respectively.}
\begin{tabular}{|c|c|c|c|c|c|c|c|c|}
\hline
error    & $h=0.119 $ & $ h=  0.0588$ & $ h=   0.0314$ & $ h=   0.0158 $ & $ h= 0.00827$& 
Order \\
\hline
$e_{w}     $& 1.108e-01 &   2.941e-02  & 7.423e-03  & 1.841e-03  & 4.679e-04&2.06 \\
 $e_{\nabla w}$  & 1.300e-01 &  4.168e-02 &  1.321e-02&   4.230e-03 &  1.502e-03&1.69\\
 $e_{\boldsymbol{\theta}}$   &8.873e-02 &  2.229e-02  & 5.434e-03  & 1.331e-03  & 3.367e-04 & 2.10 \\
 $\boldsymbol{\CE}$   &2.994e-01&   1.278e-01 &  5.395e-02 &  2.276e-02 &  1.070e-02&  1.26\\
 \hline
    \hline 
$e_{w}     $& 1.030e-01  & 2.633e-02&   6.493e-03 &  1.596e-03   &4.041e-04&2.09 \\
 $e_{\nabla w}$  &8.953e-02 &  2.255e-02&   5.477e-03&   1.343e-03 &  3.397e-04&  2.10\\
$e_{\boldsymbol{\theta}}$   &8.925e-02   &2.244e-02  & 5.445e-03  & 1.335e-03 &  3.375e-04 &   2.10\\
$\boldsymbol{\CE}$   & 1.756e-01 &  8.550e-02&   4.199e-02 &  2.020e-02 &  1.029e-02 &1.07\\
\hline
\hline   
$e_{w}     $& 1.030e-01  & 2.631e-02   &6.488e-03 &  1.596e-03 &  4.043e-04 &2.09\\
 $e_{\nabla w}$  &8.926e-02   &2.245e-02 &  5.452e-03  & 1.339e-03  & 3.387e-04 &2.10 \\
$e_{\boldsymbol{\theta}}$   &8.926e-02 &  2.245e-02 &  5.452e-03 &  1.338e-03 &  3.387e-04&   2.10\\
$\boldsymbol{\CE}$   &1.733e-01&   8.475e-02 &  4.168e-02 &  1.998e-02 &  1.014e-02& 1.07\\
\hline
\end{tabular}
\label{TABLA:1}
\end{center}
\end{table}

\begin{table}[H]
\begin{center}
\caption{$\CT_h^2$: Computed error in the discrete $L^{2}$-norm
with  $t=1.0e-01$, $t=1.0e-02$ and $t=1.0e-03$, respectively.}
\begin{tabular}{|c|c|c|c|c|c|c|c|c|c|c|}
\hline
error    & $h=0.172$ &$h=  0.0859$ & $h=  0.0430$  &$h=  0.0215$ & $h= 0.0122 $    & 
Order \\
\hline
$e_{w}     $ & 3.890e-01  & 1.110e-01&   2.958e-02&   7.612e-03 &  1.868e-03 &1.91 \\   
$e_{\nabla w}$  & 4.147e-01&   1.370e-01&   4.381e-02&   1.405e-02 & 4.653e-03   &1.61   \\   
$e_{\boldsymbol{\theta}}$  &3.647e-01   &9.742e-02   &2.480e-02 &  6.247e-03&1.535e-03 &1.96\\
$\boldsymbol{\CE}$   &6.267e-01 &  3.104e-01 &  1.345e-01 &  5.812e-02 &2.527e-02 &1.16\\
\hline
\hline   
 $e_{w}     $ &3.796e-01  & 1.037e-01 &  2.651e-02 &  6.656e-03& 1.608e-03&1.96\\   
$e_{\nabla w}$  &3.663e-01 &  9.831e-02  & 2.501e-02  & 6.278e-03&  1.534e-03& 1.96  \\   
$e_{\boldsymbol{\theta}}$   &3.659e-01   &9.802e-02   &2.490e-02 &  6.245e-03 &  1.525e-03&1.96   \\
$\boldsymbol{\CE}$   &4.199e-01  & 1.675e-01  & 7.480e-02&   3.619e-02& 1.825e-02 &1.12\\
\hline
\hline      
$e_{w}     $ & 3.795e-01&   1.036e-01 &  2.650e-02 &  6.662e-03 &1.612e-03 &1.95    \\   
$e_{\nabla w}$  &3.659e-01 &  9.803e-02  & 2.491e-02  & 6.254e-03& 1.528e-03&1.96    \\   
$e_{\boldsymbol{\theta}}$   &3.659e-01   &9.803e-02   &2.491e-02  & 6.253e-03& 1.528e-03&1.96    \\
$\boldsymbol{\CE}$   & 4.154e-01 &  1.6465e-01 &  7.360e-02 &  3.548e-02 & 1.772e-02 &1.12 \\
\hline
\end{tabular}
\label{TABLA:2}
\end{center}
\end{table}

\begin{table}[H]
\begin{center}
\caption{$\CT_h^3$: Computed error in the discrete $L^{2}$-norm
with  $t=1.0e-01$, $t=1.0e-02$ and $t=1.0e-03$, respectively.}
\begin{tabular}{|c|c|c|c|c|c|c|c|c|c|c|}
\hline
error    & $h=0.111 $ &$h= 0.0594   $ & $h= 0.0294$  &$h=0.0157$      &$h=0.00791$ & 
Order \\
\hline
$e_{w}     $ & 2.848e-01 &  8.769e-02  & 2.421e-02&   6.267e-03&   1.569e-03 & 2.06    \\   
$e_{\nabla w}$  &  3.162e-01 &  1.202e-01 &  4.091e-02&   1.338e-02&   4.447e-03& 1.70  \\   
$e_{\boldsymbol{\theta}}$  &2.566e-01  & 7.465e-02   &1.914e-02   &4.740e-03 &  1.161e-03& 2.14 \\
$\boldsymbol{\CE}$   &6.371e-01&   3.653e-01 &  1.690e-01 &  7.647e-02 &  3.470e-02&1.17 \\
\hline
\hline   
 $e_{w}     $ &2.660e-01 &  7.407e-02 &  1.9065e-02  & 4.761e-03&   1.161e-03&2.15         \\   
$e_{\nabla w}$  &2.568e-01 &  7.495e-02  & 1.912e-02  & 4.778e-03 &  1.167e-03&   2.14     \\   
$e_{\boldsymbol{\theta}}$   &2.562e-01   &7.460e-02   &1.898e-02  & 4.736e-03  & 1.155e-03 & 2.15    \\
$\boldsymbol{\CE}$   &3.793e-01  & 1.975e-01  & 9.555e-02 &  4.832e-02  & 2.365e-02& 1.10\\
\hline
\hline      
$e_{w}     $ & 2.664e-01  & 7.458e-02 &  1.902e-02 &  4.742e-03 &  1.161e-03 & 2.18    \\   
$e_{\nabla w}$  &2.561e-01&   7.468e-02  & 1.907e-02  & 4.726e-03  & 1.161e-03&2.17       \\   
$e_{\boldsymbol{\theta}}$   & 2.560e-01  & 7.468e-02  & 1.907e-02  & 4.726e-03  & 1.161e-03&  2.17      \\
$\boldsymbol{\CE}$   & 3.744e-01 &  1.904e-01 &  9.447e-02 &  4.757e-02 &  2.343e-02 &     1.11 \\
\hline
\end{tabular}
\label{TABLA:3}
\end{center}
\end{table}

It can be seen from Tables~\ref{TABLA:1}, \ref{TABLA:2} and \ref{TABLA:3}
that the theoretical predictions of Section~\ref{SEC:approximation}
are confirmed. In particular, we can appreciate a rate of convergence $O(h)$ for the energy norm $\boldsymbol{\CE}$, that is equivalent to the $||| \cdot |||$ norm. This holds for all the considered meshes and thicknesses, 
thus also underlying the locking free nature of the scheme.
Moreover, for sufficiently small $t$ we also observe
a clear rate of convergence $O(h^2)$ for
for $e_{w}$, $e_{\nabla w}$ and $e_{\boldsymbol{\theta}}$, in accordance with Corollary \ref{picodepedra}.

\subsection{Test 2:}

As a second test, we investigate more in depth
the locking-free character of the method, and also
take the occasion for a comparison with the limit Kirchhoff model.
It is well known (see \cite{BF}) that when $t$ goes to zero the
solution of the Reissner-Mindlin
model converges
to an identical Kirchhoff-Love solution:
Find $w_{0}\in \HdO$ such that
\begin{equation}
\label{kk}
\dfrac{\mathbb{E}}{12(1-\nu^{2})}\Delta^{2}w_{0}=g ,
\end{equation}
with the corresponding boundary conditions.

We have considered a rectangular plate
$\O:=(0,a)\times(0,b)$, simply supported on the whole boundary,
and we have chosen the transversal load $g$ as
\begin{equation*}
g(x,y)=\sin\left(\dfrac{\pi}{a} x\right)\sin\left(\dfrac{\pi}{b}y\right).
\end{equation*}
Then, the analytical solution $w_{0}$
of problem~\eqref{kk} is given by
\begin{align*}
w_{0}(x,y)&=\dfrac{12(1-\nu^{2})}{\mathbb{E}}\left(\pi^{4}\left(\dfrac{1}{a^{2}}+\dfrac{1}{b^{2}}\right)^{2}\right)^{-1}
\sin\left(\dfrac{\pi}{a} x\right)\sin\left(\dfrac{\pi}{b}y\right).
\end{align*}

The material constants have been chosen
$\mathbb{E} = 1$ and $\nu=0.3$. Moreover,
we have taken $a=1$ and $b=2$,
and we have used three different
families of meshes (see Figure~\ref{FIG:VM2}): 
\begin{itemize}
\item $\CT_h^1$: triangular meshes;
\item $\CT_h^4$: hexagonal meshes;
\item $\CT_h^5$: Voronoi polygonal meshes.
\end{itemize}

\begin{figure}[H]
\begin{center}
\begin{minipage}{4.3cm}
\centering\includegraphics[height=6cm, width=3.5cm]{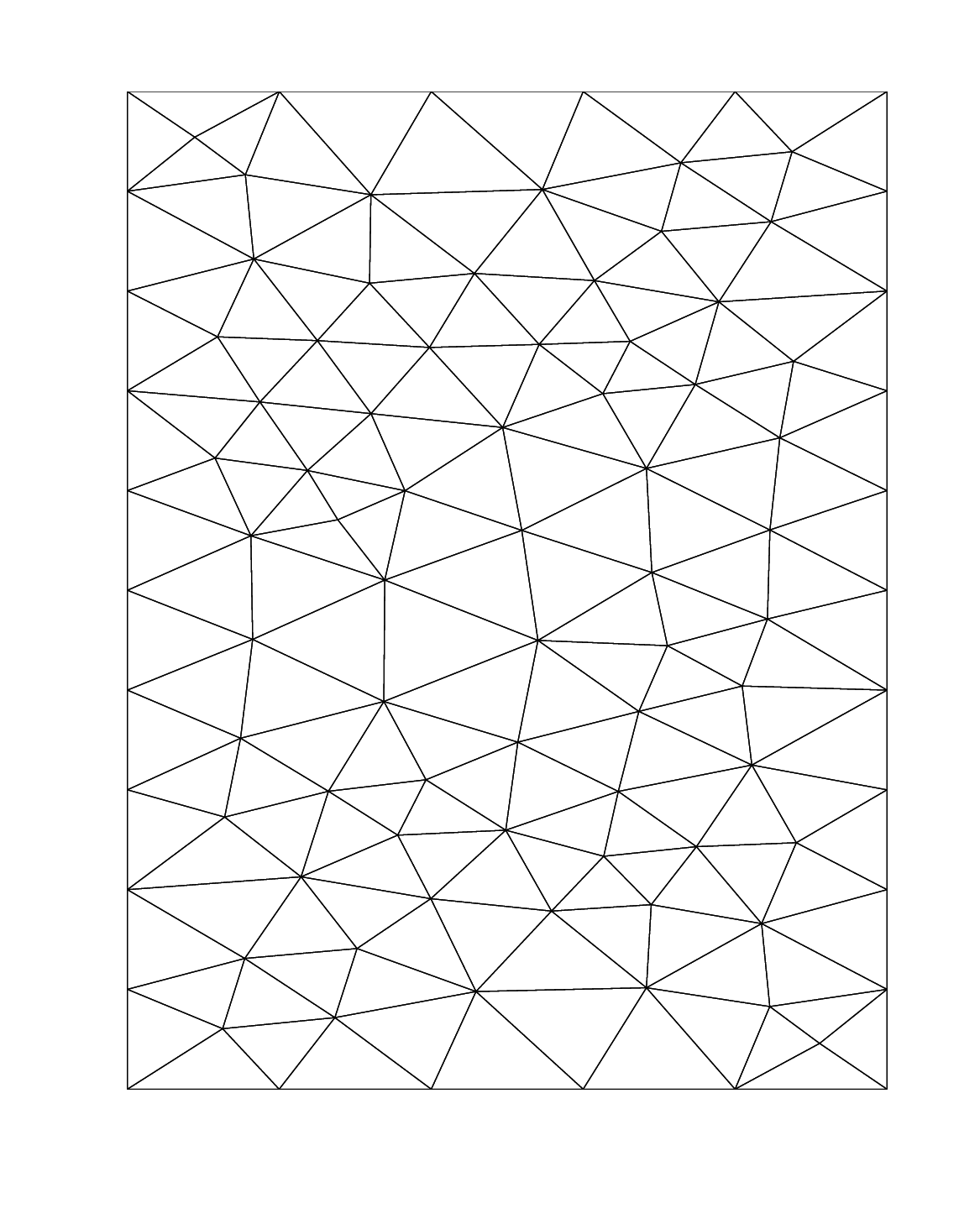}
\end{minipage}
\begin{minipage}{4.3cm}
\centering\includegraphics[height=6cm, width=3.5cm]{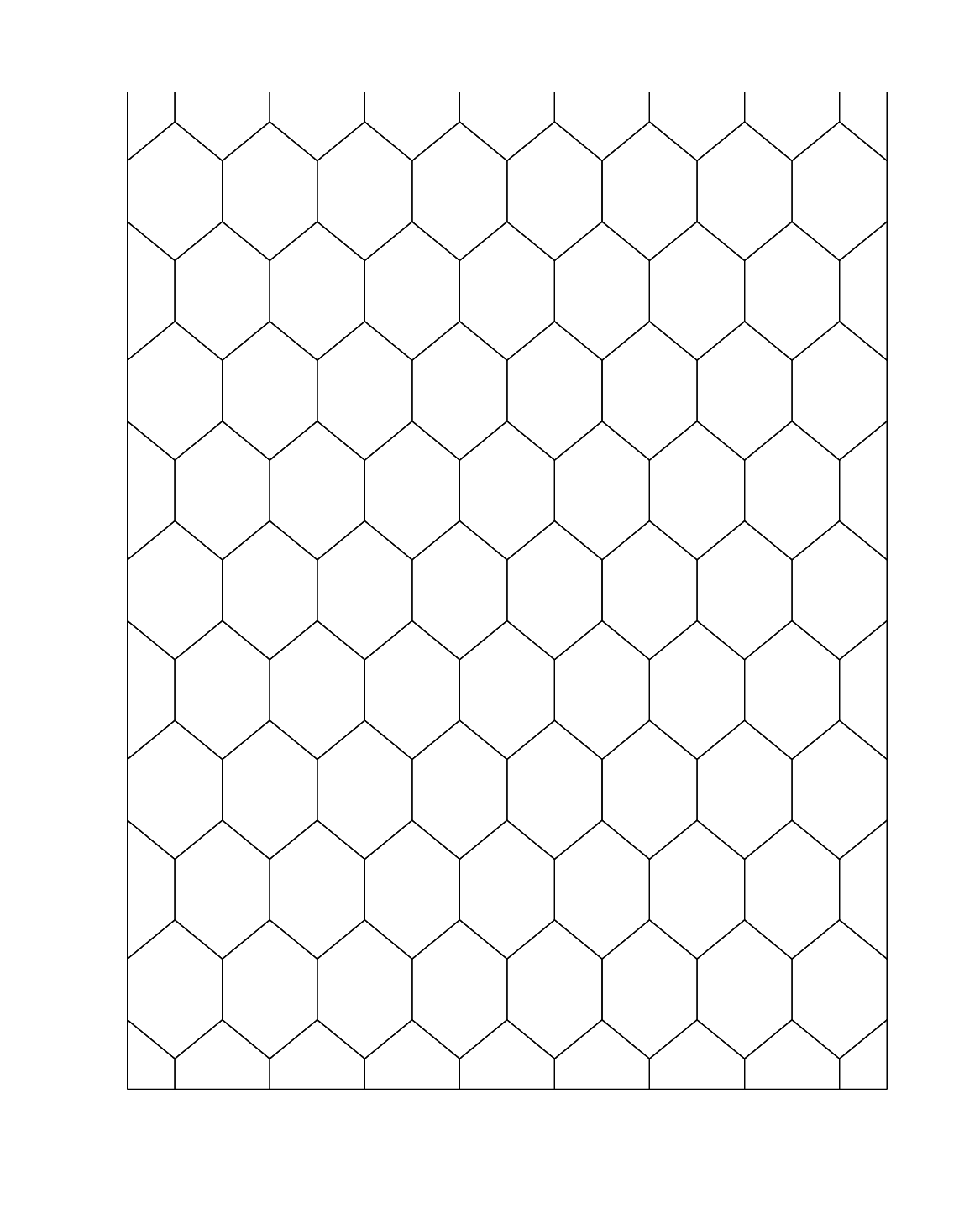}
\end{minipage}
\begin{minipage}{4.3cm}
\centering\includegraphics[height=6cm, width=3.5cm]{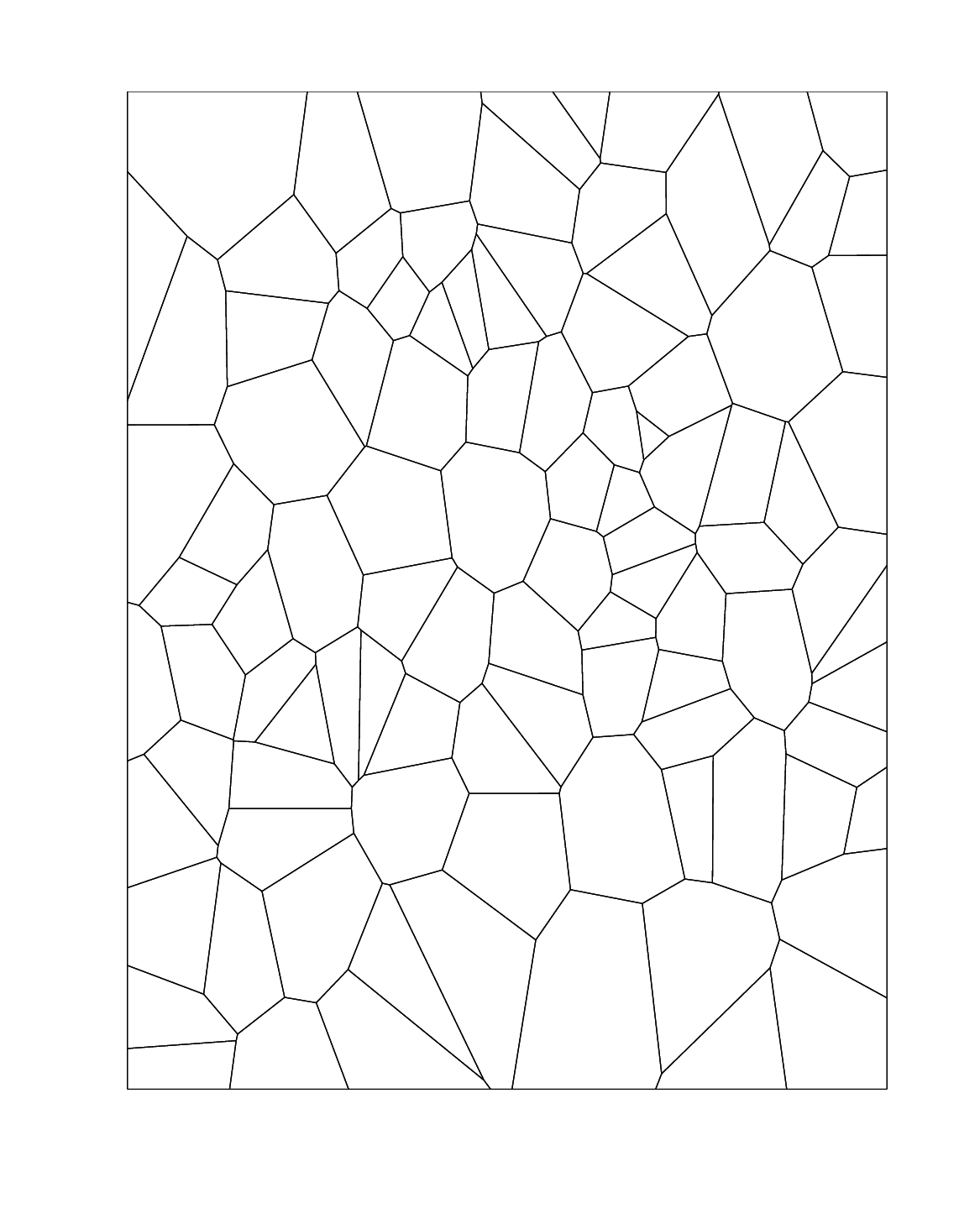}
\end{minipage}
\caption{ Sample meshes: $\CT_h^1$ (left), $\CT_h^4$ (middle) and
$\CT_h^{5}$ (right). }
\label{FIG:VM2}
\end{center}
\end{figure}

Tables~\ref{TABLA:6}, \ref{TABLA:9} and \ref{TABLA:10}
show an analysis for various thicknesses
in order to assess the locking-free nature of the proposed method.
It is shown the relative errors in
the discrete $L^{2}$-norm which are obtained by comparing the numerical solution with
the Kirchhoff-Love plate solution $w_{0}$
for each family of meshes and different refinement levels and
considering different thickness: $t=1.0e-01$,
$t=1.0e-02$, $t=1.0e-03$, $t=1.0e-04$ and $t=1.0e-05$, respectively.

It can be clearly seen from these tables that the proposed method
is locking-free. The lack of error reduction for finer values of $h$,
which can be observed for the case $t=1.0e-01$,
is clearly due to the fact that the model error is dominating the discretization error in those cases.

\begin{table}[H]
\begin{center}
\caption{Computed error in $e_{w}$ by $\CT_h^1$.}
\begin{tabular}{|c|l|l|l|l|l|l|l|l|}
\hline
  $t\backslash h$& 2.449e-01 &  1.271e-01 &  6.469e-02 &  3.241e-02 &  1.617e-02\\ 
\hline
   1.0e-01 &  8.609e-03  & 4.090e-02 &  6.039e-02  & 7.108e-02 &  7.562e-02\\
   1.0e-02  & 4.687e-02  & 1.034e-02  & 1.798e-03  & 7.665e-04  & 2.012e-03\\
   1.0e-03  & 4.730e-02  & 1.096e-02   &2.719e-03  & 6.668e-04   &1.385e-04\\
   1.0e-04   &4.730e-02 &  1.097e-02  & 2.728e-03&   6.823e-04   &1.664e-04\\
   1.0e-05 & 4.730e-02  &1.097e-02 &  2.728e-03 &  6.825e-04&     1.666e-04\\
      \hline 
    \end{tabular}
\label{TABLA:6}
\end{center}
\end{table}
\begin{table}[H]
\begin{center}
\caption{Computed error in $e_{w}$ by $\CT_h^4$.}
\begin{tabular}{|c|l|l|l|l|l|l|l|l|}
\hline
  $t\backslash h$& 2.781e-01 &  1.309e-01  &  6.730e-02 &  4.443e-02  &  3.316e-02\\
 \hline
 1.0e-01 &   8.394e-02 &  5.936e-02   &  6.256e-02&    6.715e-02&   6.910e-02\\
1.0e-02 &   4.979e-02   &1.018e-02      &4.022e-03  & 2.476e-03  &  2.095e-03\\
1.0e-03 &   4.943e-02 &  9.575e-03 &    3.145e-03 &    1.333e-03  &  7.381e-04\\
1.0e-04 &   4.942e-02   &9.569e-03   &3.136e-03   & 1.321e-03  &  7.228e-04\\
1.0e-05 &    4.942e-02  & 9.569e-03 & 3.136e-03  &  1.321e-03 &   7.227e-04  \\
      \hline
    \end{tabular}
\label{TABLA:9}
\end{center}
\end{table}

\begin{table}[H]
\begin{center}
\caption{Computed error in $e_{w}$ by $\CT_h^5$.}
\begin{tabular}{|c|l|l|l|l|l|l|l|l|}
\hline
  $t\backslash h$&4.592e-01  & 2.348e-01 &  1.294e-01 &  8.174e-02 &  5.507e-02 \\
 \hline
 1.0e-01 &   2.762e-02  & 4.055e-02  & 4.618e-02 &  6.510e-02  & 6.982e-02\\
1.0e-02 &    1.270e-02 &  3.454e-03  & 7.218e-04  & 1.141e-03  & 1.393e-03\\
1.0e-03 &    1.277e-02 &  3.004e-03 &  3.816e-04 &  6.483e-05 &  4.532e-05\\
1.0e-04 &    1.277e-02 &  2.999e-03 &  3.876e-04  & 6.248e-05 &  3.257e-05\\
1.0e-05 &    1.277e-02 &2.999e-03 &  3.874e-04 &6.215e-05 & 3.193e-05\\
      \hline
    \end{tabular}
\label{TABLA:10}
\end{center}
\end{table}

\subsection{Test 3:}
\label{sec:L-shaped}

In this numerical example we test the properties
of the proposed method on an L-shaped plate:
$\O:=(0,1)\times(0,1)\setminus[0.5,1)\times[0.5,1)$. 

The plate is clamped on the edges $\{0\}\times[0, 1]$,
$\{1\}\times[0, 1/2]$, $[0,1]\times\{0\}$, $[0,1/2]\times\{1\}$,
and free on the remaining boundary and subjected to the constant 
transversal load $g=1$ (constant on the whole domain)
and we take the material constants as
$\mathbb{E} = 1$ and $\nu=0$, with
shear correction factor $k=5/6$. The thickness is set as $t=1.0e-01$.

We consider two families of meshes
(see Figure~\ref{FIG:VM123}): 
\begin{itemize}
\item $\CT_h^6$: a sequence of uniform squares meshes;
the first one is constructed by subdividing into $8 \times 8$
squares each of the three squares composing $\Omega$
(see upper left picture in Figure ~\ref{FIG:VM123}),
up to the last one that is associated to an analogous $40 \times 40$ subdivision.
\item $\CT_h^7$: polygonal meshes obtained following a very
simple procedure that refines the mesh only around the re-entrant
corner, starting from an initial uniform square mesh (that corresponds to the coarser mesh in $\CT_h^6$).
It consists of splitting each element which has the free corner  $(1/2,1/2)$
as a vertex into four quadrilaterals by connecting the
barycenter of the element with the midpoint of each edge.
Notice that although this process is initiated with a mesh of squares,
the successively created meshes will contain other kind of convex polygons
as can be seen in Figure~\ref{FIG:VM123}.
\end{itemize}
The main purpose of this test is to validate the use of refined meshes as
a tool to handle solutions with corner singularities (therefore,
in particular, not in $H^3(\Omega)$) and therefore overcome the
theoretical limitation underlined in Remark \ref{rem:reg}.
Moreover, the possibility of using polygonal meshes makes
such refinement construction simpler, as shown by the example above;
boundary layer treatment would obviously follow an analogous approach.

In Table~\ref{TABLA:12} we report the value of the transversal displacement
of the plate at the free corner $(1/2,1/2)$ and the number of degrees of freedom associated to each mesh.
Since we have no exact solution for this problem, the last line in the table shows the reference values obtained with
a very fine triangular mesh with the finite element method introduced and analyzed in \cite{DL}. 
We note that the family of meshes $\CT_h^7$, being refined only around the corner,
cannot obtain convergence as the error generated far from the corner would eventually
dominate. Nevertheless, family $\CT_h^7$ fits completely into the scope of the present test:
comparing the displacement values obtained by $\CT_h^7$ with those of the uniform
meshes one can clearly appreciate the efficiency of the proposed corner refinement in handling the singularity.
%

\begin{table}[H]
\begin{center}
\caption{Test with an L-shaped plate. Number of degrees of freedom,
transversal displacement of the free corner  $(1/2,1/2)$ for the
considered meshes, errors and a reference value for thickness $t=1.0e-01$.}
\begin{tabular}{|c|c|c|c|}
\hline
& 1541 & 0.01953427 & 2.0629e-04\\
& 2345 & 0.01953098 &2.0958e-04\\
& 5765 & 0.01957589 &1.6468e-04\\
  &8885 &0.01960441 &1.3615e-04\\ 
$\CT_h^6$  &19625 &0.01965190 & 8.8663e-05\\
& 22277 &0.01965845 &8.2120e-05\\
 &34565 &0.01967856 &6.2009e-05\\
\hline
ref. & 181603&0.01974057 & \\
\hline
\hline
& 1541   & 0.01953427  &2.0629e-04 \\
  & 1628 & 0.01965125  &8.9314e-05 \\ 
$\CT_h^7$  &1715  & 0.01970046 &4.0110e-05\\
& 1802 &  0.01972179 &1.8778e-05\\
 &1889 &  0.01973084 &9.7313e-06 \\
\hline
ref. & 181603&0.01974057 &\\
\hline

\end{tabular}
\label{TABLA:12}
\end{center}
\end{table}

\begin{figure}[H]
\begin{center}
\begin{minipage}{5.3cm}
\centering\includegraphics[height=5.1cm, width=5.1cm]{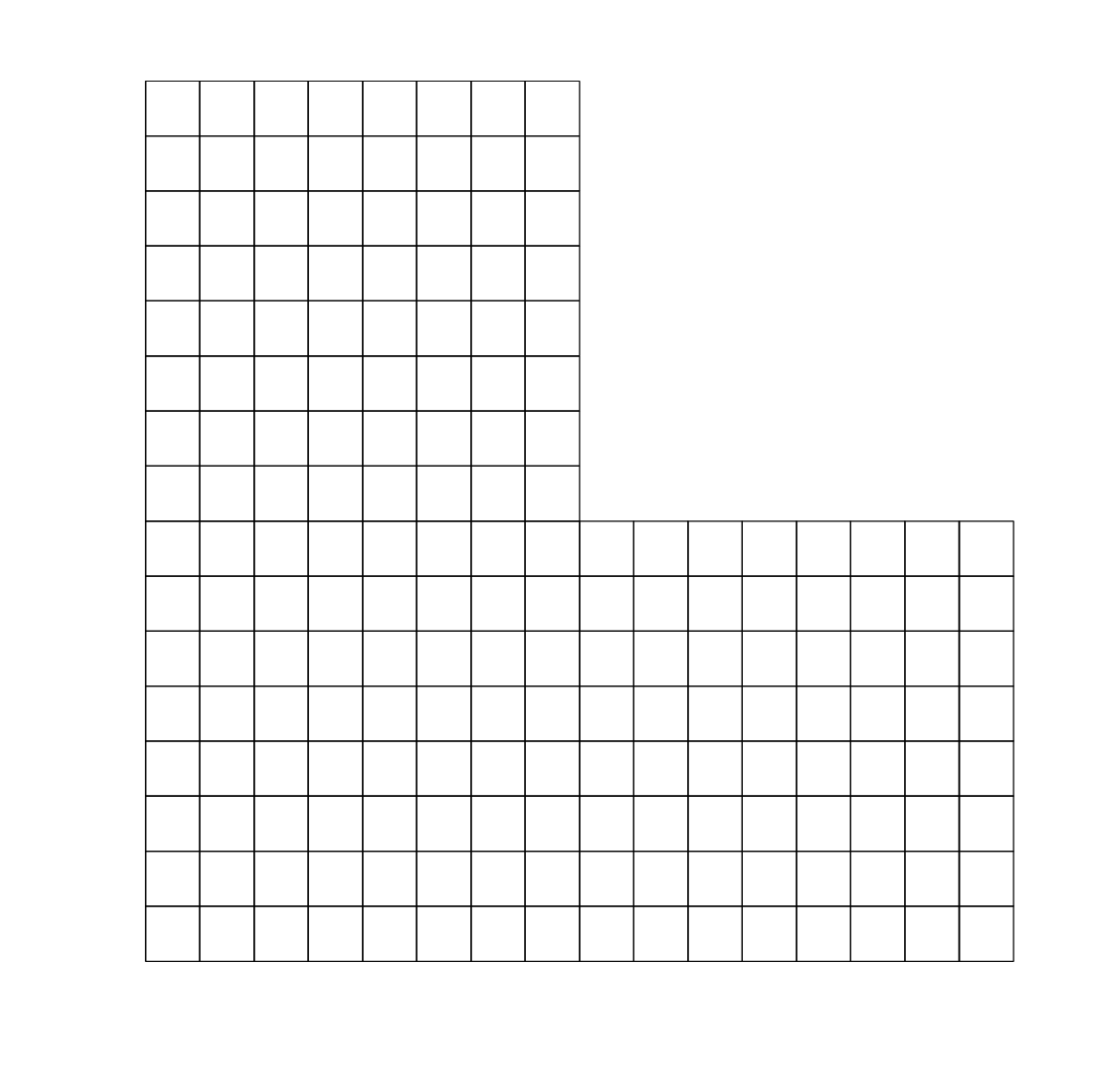}
\centering{Initial mesh (both for $\CT_h^6$ and $\CT_h^7$).}
\end{minipage}
\begin{minipage}{5.3cm}
\centering\includegraphics[height=5.1cm, width=5.1cm]{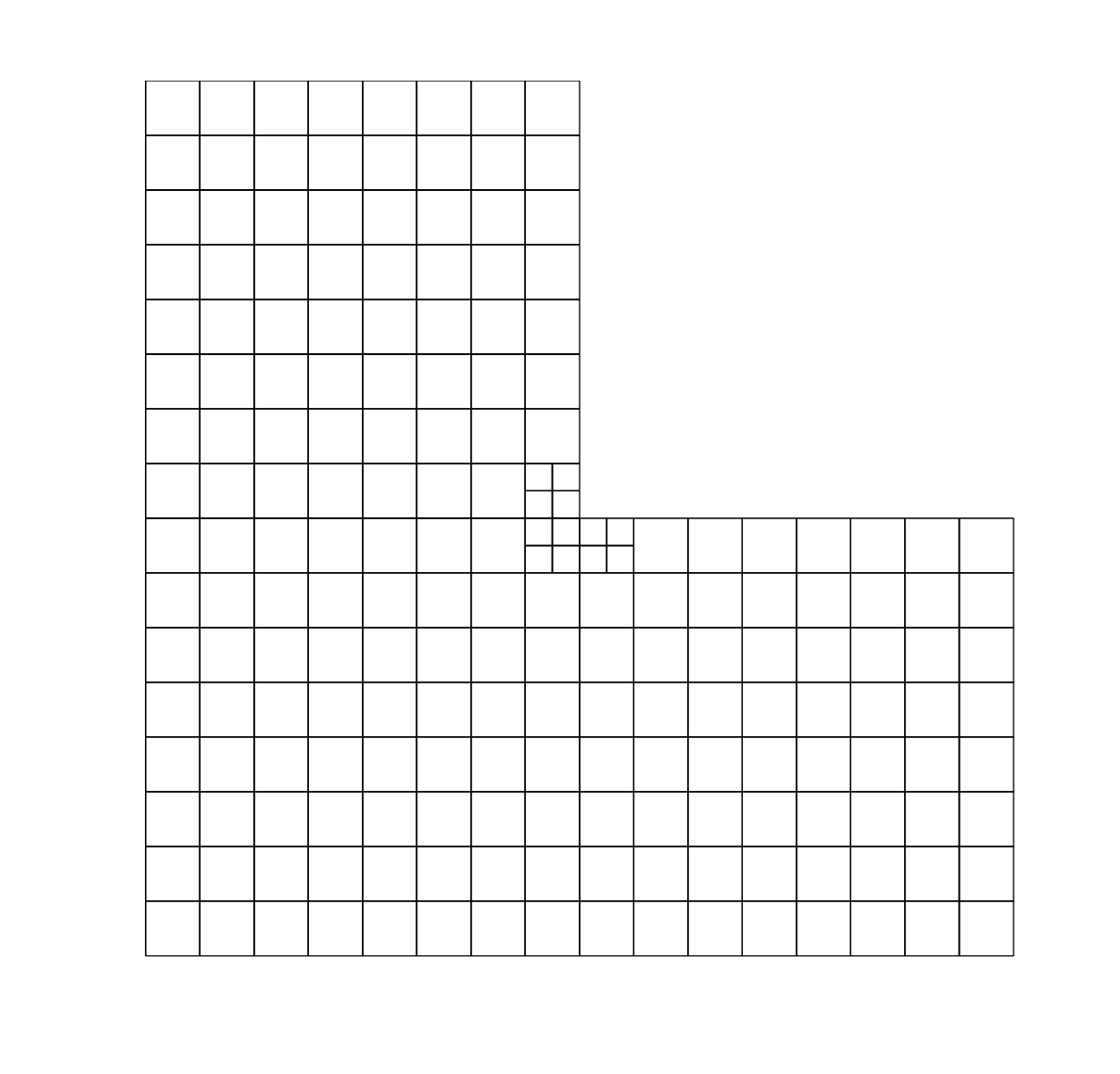}
\centering{Mesh 2 of $\CT_h^7$.}
\end{minipage}\\
\begin{minipage}{5.3cm}
\centering\includegraphics[height=5.1cm, width=5.1cm]{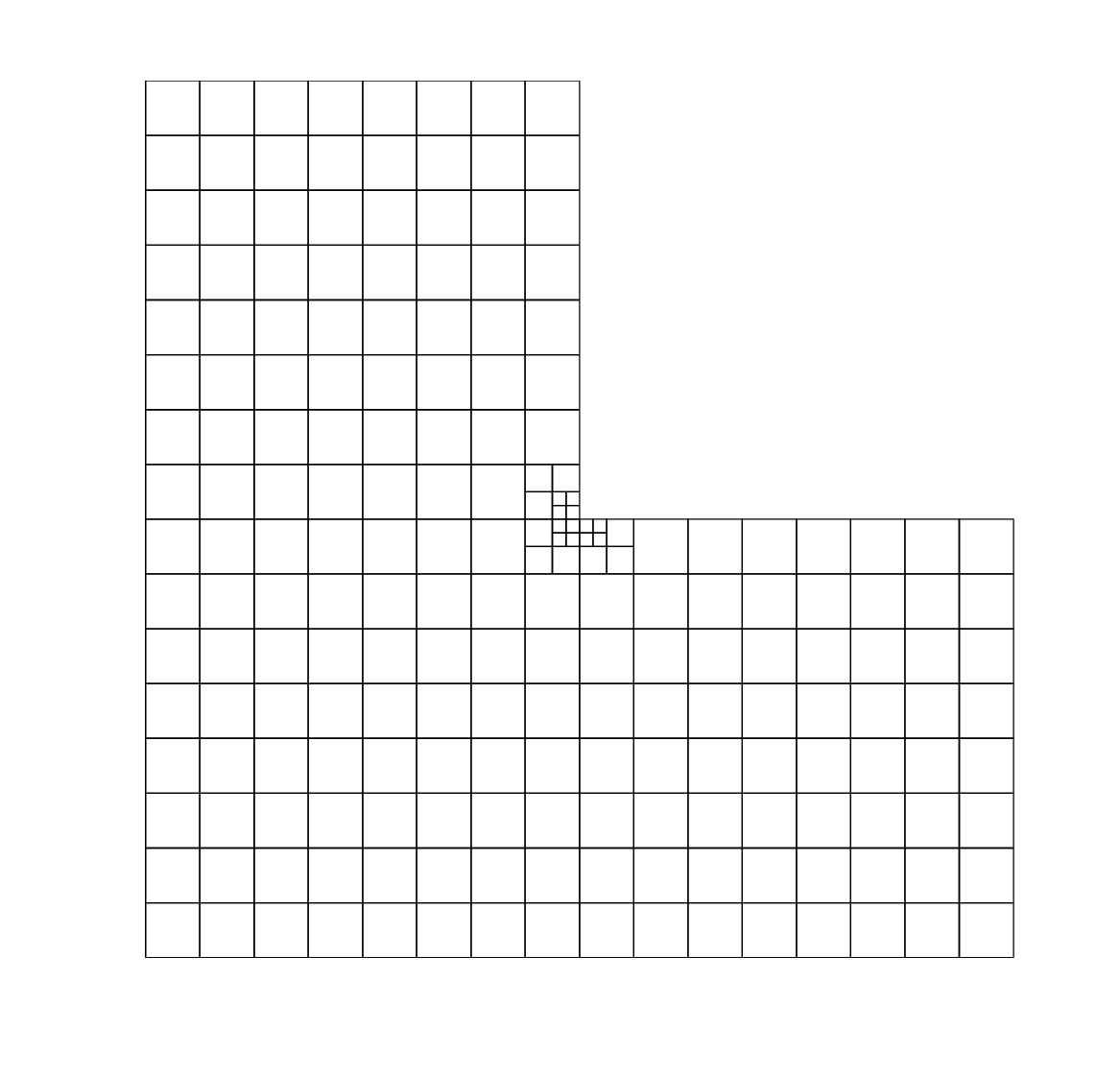}
\centering{Mesh 3 of $\CT_h^7$.}
\end{minipage}
\begin{minipage}{5.3cm}
\centering\includegraphics[height=5.1cm, width=5.1cm]{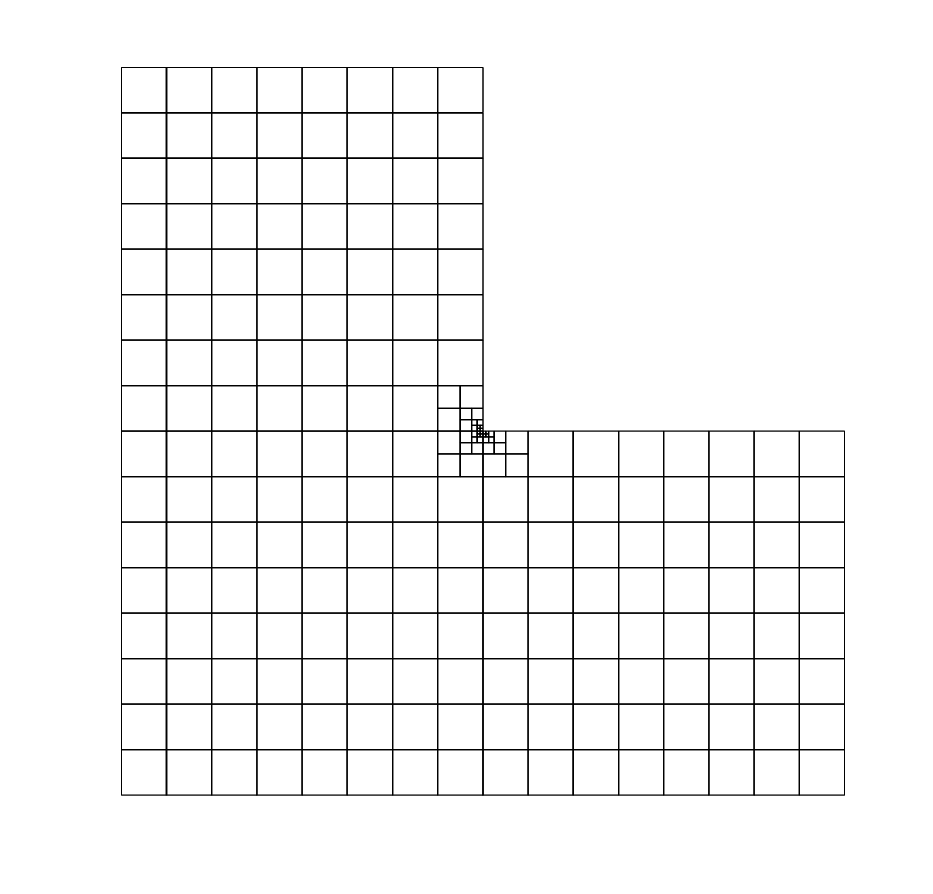}
\centering{Mesh 6 of $\CT_h^7$.}
\end{minipage}
\caption{Sample meshes on L-shaped domain.}
 \label{FIG:VM123}
\end{center}
\end{figure}

\section*{Acknowledgement}
The authors are deeply grateful Prof. Rodolfo Rodr\'iguez
(Universidad de Concepci\'on) for the fruitful discussions.

\bibliographystyle{amsplain}

\end{document}